
\documentclass[titlepage,12pt]{article} 
\usepackage{hyperref}
\usepackage[usenames,dvipsnames]{pstricks} 
\usepackage{pst-plot} 
\usepackage[title]{appendix}
\usepackage{amssymb,amsthm,amsmath} 
\usepackage{mathrsfs}
\usepackage[a4paper]{geometry}
\usepackage{datetime2}
\usepackage[utf8]{inputenc}
\usepackage[italian,english]{babel}

\selectlanguage{english}


\geometry{text={15.7 cm, 22.8 cm},centering,includefoot}



\date{}


%
%
%


%


\newcommand{\ep}{\varepsilon}
\newcommand{\re}{\mathbb{R}}

\newcommand{\n}{\mathbb{N}}

\newcommand{\holder}{H\"older}
\newcommand{\PS}{\mathcal{PS}}
\newcommand{\bl}{b_{\lambda}}
\newcommand{\cl}{c_{\lambda}}
\newcommand{\vl}{u_{\lambda}}
\newcommand{\ul}{u_{\lambda}}

\newcommand{\PSp}{\mathcal{PS}}
\newcommand{\what}{\widehat{w}}
\newcommand{\ft}{\mathscr{F}}


\newtheorem{thm}{Theorem}[section]

\newtheorem{rmk}[thm]{Remark}
\newtheorem{prop}[thm]{Proposition}
\newtheorem{defn}[thm]{Definition}
\newtheorem{cor}[thm]{Corollary}

\newtheorem{lemma}[thm]{Lemma}

 
\title{Optimal derivative loss for abstract wave equations}

\author{Marina Ghisi\vspace{1ex}\\ 
{\normalsize Università degli Studi di Pisa} \\
{\normalsize Dipartimento di Matematica}\\ 
{\normalsize PISA (Italy)}\\
{\normalsize e-mail: \texttt{marina.ghisi@unipi.it}}
\and
Massimo Gobbino\vspace{1ex}\\ 
{\normalsize Università degli Studi di Pisa} \\
{\normalsize Dipartimento di Matematica}\\ 
{\normalsize PISA (Italy)}\\  
{\normalsize e-mail: \texttt{massimo.gobbino@unipi.it}}
}


\begin{document}
\maketitle

\begin{abstract}

We consider an abstract wave equation with a propagation speed that depends only on time. We assume that the propagation speed is differentiable for positive times, continuous up to the origin, but with first derivative that is potentially singular at the origin. 

We examine the derivative loss of solutions, and in particular we investigate which conditions on the modulus of continuity and on the behavior of the derivative in the origin yield, respectively, no derivative loss, an arbitrarily small derivative loss, a finite derivative loss, or an infinite derivative loss. As expected, we obtain that stronger assumptions on the modulus of continuity can compensate weaker assumptions on the growth of the derivative, and viceversa.

Suitable counterexamples show that our results are sharp. We prove indeed that, for every set of conditions, the class of propagation speeds that satisfy the given conditions, and for which the corresponding equation exhibits a derivative loss as large as possible, is nonempty and actually also residual in the sense of Baire category. 

\vspace{6ex}

\noindent{\bf Mathematics Subject Classification 2010 (MSC2010):} 
35L90 (35L20, 35B30, 35B65).

		
\vspace{6ex}

\noindent{\bf Key words:} 
linear hyperbolic equation, wave equation, finite derivative loss, infinite derivative loss, modulus of continuity, Baire category, residual set.

\end{abstract}

 
\section{Introduction}

In this paper we consider the wave equation
\begin{equation}
u_{tt}-c(t)\Delta u=0,
\label{defn:CWE}
\end{equation}
and its abstract version
\begin{equation}
u''(t)+c(t)Au(t)=0,
\label{defn:AWE}
\end{equation}
where $A$ is a linear nonnegative self-adjoint operator with domain $D(A)$ in some real Hilbert space $H$. We always assume that the coefficient $c(t)$, which in the model (\ref{defn:CWE}) represents the square of the propagation speed, is defined in some time interval $(0,T_{0})$, and satisfies the \emph{strict hyperbolicity assumption}
\begin{equation}
0<\mu_{1}\leq c(t)\leq\mu_{2}
\qquad
\forall t\in(0,T_{0}).
\label{hp:c-sh}
\end{equation}

We investigate the regularity of solutions to (\ref{defn:AWE}) with initial data
\begin{equation}
u(0)=u_{0},
\qquad
u'(0)=u_{1}.
\label{eqn:data}
\end{equation}

We recall that problem (\ref{defn:AWE})--(\ref{eqn:data}) admits a unique solution for large classes of initial data, even if the coefficient $c(t)$ is just in $L^{1}((0,T_{0}))$, without sign conditions. Nevertheless, in general this solution is very weak, in the sense that it lives in a huge space of hyperdistributions, even if initial data are smooth.

Here we are interested in solutions with more ``space'' regularity. In order to state the definitions in the abstract setting we recall that, for every real number $\beta$, the operator $A^{\beta}$ is defined in a suitable domain $D(A^{\beta})$, which in the concrete case corresponds to the Sobolev space $H^{2\beta}$ (distributions if $\beta<0$). 

\begin{defn}[Well-posedness vs derivative loss]\label{defn:regularity}
\begin{em}
\mbox{} 

\begin{itemize}

\item  (No derivative loss). Problem (\ref{defn:AWE})--(\ref{eqn:data}) is said to be well-posed with \emph{no derivative loss} if, for every pair of initial data $(u_{0},u_{1})\in D(A^{\beta+1/2})\times D(A^{\beta})$, the unique solution satisfies
\begin{equation}
(u(t),u'(t))\in D(A^{\beta+1/2})\times D(A^{\beta})
\qquad
\forall t\in[0,T_{0}].
\nonumber
\end{equation}

\item  (Arbitrarily small derivative loss). Problem (\ref{defn:AWE})--(\ref{eqn:data}) is said to be well-posed with (at most an) \emph{arbitrarily small derivative loss} if, for every pair of initial data $(u_{0},u_{1})\in D(A^{\beta+1/2})\times D(A^{\beta})$, the unique solution satisfies
\begin{equation}
(u(t),u'(t))\in D(A^{\beta-\ep+1/2})\times D(A^{\beta-\ep})
\qquad
\forall t\in[0,T_{0}]
\quad
\forall\ep>0.
\nonumber
\end{equation}

The arbitrarily small derivative loss does actually happen if there exists a pair of initial data $(u_{0},u_{1})\in D(A^{\beta+1/2})\times D(A^{\beta})$ such that the unique solution satisfies
\begin{equation}
(u(t),u'(t))\not\in D(A^{\beta+1/2})\times D(A^{\beta})
\qquad
\forall t\in(0,T_{0}].
\nonumber
\end{equation}

\item  (Finite derivative loss). Problem (\ref{defn:AWE})--(\ref{eqn:data}) is said to be well-posed with (at most~a) \emph{finite derivative loss} if there exists a positive real number $\delta$ such that, for every pair of initial data $(u_{0},u_{1})\in D(A^{\beta+1/2})\times D(A^{\beta})$, the unique solution satisfies
\begin{equation}
(u(t),u'(t))\in D(A^{\beta-\delta+1/2})\times D(A^{\beta-\delta})
\qquad
\forall t\in[0,T_{0}].
\nonumber
\end{equation}

The finite derivative loss does actually happen if there exist a function $\delta:(0,T_{0}]\to(0,+\infty)$, and a pair of initial data $(u_{0},u_{1})\in D(A^{\beta+1/2})\times D(A^{\beta})$, such that the unique solution satisfies
\begin{equation}
(u(t),u'(t))\not\in D(A^{\beta-\delta(t)+1/2})\times D(A^{\beta-\delta(t)})
\qquad
\forall t\in(0,T_{0}].
\nonumber
\end{equation}

\item  (Infinite derivative loss). Problem (\ref{defn:AWE})--(\ref{eqn:data}) is said to exhibit and \emph{infinite derivative loss} if there exists a pair of initial data $(u_{0},u_{1})\in D(A^{\beta+1/2})\times D(A^{\beta})$, such that the unique solution satisfies
\begin{equation}
(u(t),u'(t))\not\in D(A^{-\gamma+1/2})\times D(A^{-\gamma})
\qquad
\forall\gamma>0,\quad\forall t\in(0,T_{0}].
\nonumber
\end{equation}

\end{itemize}

\end{em}
\end{defn}

Due to the linearity of the equation, all the definitions stated above do not depend on the choice of $\beta$. In words, no derivative loss means more generally that all solutions live in the same space of the initial data, while finite derivative loss means that the ``space regularity'' of the solution for positive times is less than the corresponding regularity of initial data. The parameter $\delta$ measures this loss of regularity, which is a true loss of derivatives in the concrete case where the domains of powers of $A$ are actually Sobolev spaces. The arbitrary small derivative loss is a condition in between no derivative loss and finite derivative loss: in this case solutions for positive times do not remain in the same space of initial data, but in all spaces with smaller exponents. Finally, the infinite derivative loss is a dramatic loss of regularity: in the concrete case it means the existence of solutions whose initial data have any given Sobolev regularity, and nevertheless they are not even distributions for positive times.

It is well known that the derivative loss of solutions depends on the time-regularity of the coefficient $c(t)$, and in particular on its oscillatory behavior. This regularity has been measured in different ways in the literature. Let us mention some of them.

\paragraph{\textmd{\textit{Modulus of continuity}}}

Let us assume that $c(t)$ is continuous in the closed interval $[0,T_{0}]$, and let $\omega:[0,+\infty)\to[0,+\infty)$ be a function such that
\begin{equation}
|c(t)-c(s)|\leq\omega(|t-s|)
\qquad
\forall(t,s)\in[0,T_{0}]^{2}.
\label{hp:c-omega}
\end{equation}

Any function $\omega$ with this property is called a \emph{modulus of continuity} for $c(t)$ in $[0,T_{0}]$.

The relations between the modulus of continuity of the coefficient and the regularity of solutions was investigated for the first time by F.~Colombini, E.~De~Giorgi and S.~Spagnolo in the seminal paper~\cite{dgcs}. The result was then refined and extended in many subsequent papers (see for example~\cite{1989-ENS-ColSpa,1995-Duke-ColLer,2006-JDE-CicCol,2015-CPDE-ColDSaFanMet}). Concerning the derivative loss of solutions, the situation is summarized in Table~\ref{table:omega}, where the assumptions in the first column refer to the behavior of $\omega(\sigma)$ as $\sigma\to 0^{+}$.
\begin{table}[h]
\centering 
\renewcommand{\arraystretch}{1.5}
\begin{tabular}{|c|l|}
\hline
$\omega(\sigma)\sim\sigma$
&
no derivative loss 
\\
\hline
$\sigma\ll\omega(\sigma)\ll\sigma|\log\sigma|$
&
arbitrarily small derivative loss
\\
\hline
$\omega(\sigma)\sim\sigma|\log\sigma|$
&
finite derivative loss
\\
\hline
$\omega(\sigma)\gg\sigma|\log\sigma|$
&
infinite derivative loss
\\
\hline
\end{tabular}
  
\caption{Modulus of continuity of $c(t)$ vs derivative loss}
\label{table:omega}
\end{table}

Now we know that all the results stated in Table~\ref{table:omega} are \emph{residually optimal}, namely for every modulus of continuity $\omega$ the set of coefficients $c(t)$ that are $\omega$-continuous, and for which problem (\ref{defn:AWE})--(\ref{eqn:data}) does exhibit the prescribed derivative loss is residual in the sense of Baire category (see~\cite{gg:residual,gg:DGCS-critical}).

\paragraph{\textmd{\textit{Singular behavior of the derivative at the origin}}}

Let us assume that $c(t)$ is differentiable for positive times, and let $\theta:(0,+\infty)\to(0,+\infty)$ be a nonincreasing function such that
\begin{equation}
|c'(t)|\leq\theta(t)
\qquad
\forall t\in(0,T_{0}].
\label{hp:c-theta}
\end{equation}

We point out that now $c(t)$ is not required to be continuous in $t=0$, and also $\theta(t)$ is allowed to diverge as $t\to 0^{+}$, and actually this is the interesting case. The effect of this singular behavior of $c'(t)$ in $t=0$ was studied by F.~Colombini, D.~Del Santo and T.~Kinoshita in~\cite{2002-SNS-ColDSaKin} in the case where $\theta(t)\sim 1/t^{\beta}$. Concerning the regularity of solutions, the situation is summarized in Table~\ref{table:theta}, where the assumptions in the first column refer to the behavior of $\theta(t)$ as $t\to 0^{+}$. Note that, due to the strict hyperbolicity condition, the divergence of the integral of $|c'(t)|$ implies a highly oscillatory behavior of $c(t)$.
\begin{table}[h]
\centering 
\renewcommand{\arraystretch}{2.3}
\begin{tabular}{|c|l|}
\hline
$\displaystyle\int_{0}^{T}\theta(t)\,dt<+\infty$
&
well-posedness in Sobolev spaces 
\\[1ex]
\hline
$\displaystyle\int_{0}^{T}\theta(t)\,dt=+\infty$\quad and\quad $\theta(t)\ll\dfrac{1}{t}$
&
arbitrarily small derivative loss
\\[1ex]
\hline
$\theta(t)\sim\dfrac{1}{t}$
&
finite derivative loss
\\[1ex]
\hline
$\theta(t)\gg\dfrac{1}{t}$
&
infinite derivative loss
\\[1ex]
\hline
\end{tabular}
  
\caption{Singular behavior of $c'(t)$ vs derivative loss}
\label{table:theta}
\end{table}

The optimality of many points in Table~\ref{table:theta} remained open for almost two decades. The last steps are contained in~\cite{gg:CDSR-optimal} and in the present paper.

\paragraph{\textmd{\textit{Singular behavior of the first two derivatives at the origin}}}

A natural way to extend the results of the previous paragraph is to consider the first two derivatives of the coefficient $c(t)$, with the hope that a bound on $|c'(t)|$ and $|c''(t)|$ can prevent $c(t)$ from oscillating too fast and yield a smaller derivative loss. A first result in this direction was obtained by T.~Yamazaki in~\cite{1990-CPDE-yamazaki}. The assumption is that $c(t)$ is twice differentiable for positive times and satisfies, up to multiplicative constants, the estimates
\begin{equation}
|c'(t)|\leq\frac{1}{t}
\qquad\quad\text{and}\quad\qquad
|c''(t)|\leq\frac{1}{t^{2}}
\nonumber
\end{equation}
for every $t\in(0,T_{0}]$. Under these assumptions she proved that problem (\ref{defn:AWE})--(\ref{eqn:data}) is well-posed with no derivative loss.

Some years later, F.~Colombini, D.~Del Santo and M.~Reissig in~\cite{2003-BSM-ColDSaRei} assumed that, up to multiplicative constants, the coefficient $c(t)$ satisfies
\begin{equation}
|c'(t)|\leq \frac{|\log t|}{t}
\qquad\quad\text{and}\quad\qquad
|c''(t)|\leq \left(\frac{\log t}{t}\right)^{2}
\nonumber
\end{equation}
in a right neighborhood of the origin. Under these assumptions they proved that problem (\ref{defn:AWE})--(\ref{eqn:data}) is well-posed with finite derivative loss (see also~\cite{2003-MMAS-Hirosawa,2003-MatN-Hirosawa}). 

These results were extended and unified recently in~\cite{gg:CDSR-optimal}, were it is assumed that
\begin{equation}
|c'(t)|\leq\frac{\varphi(t)}{t}
\qquad\quad\text{and}\quad\qquad
|c''(t)|\leq\left(\frac{\varphi(t)}{t}\right)^{2}\exp(\psi(t)),
\nonumber
\end{equation}
where $\varphi:(0,T_{0})\to(0,+\infty)$ and $\psi:(0,T_{0})\to(0,+\infty)$ are suitable nonincreasing and continuous functions. The results of~\cite{gg:CDSR-optimal} are summarized in Table~\ref{table:c'-c''}, where the first column refers to the behavior as $t\to 0^{+}$.
\begin{table}[h]
\centering 
\renewcommand{\arraystretch}{1.5}
\begin{tabular}{|c|l|}
\hline
$(1+\varphi(t))\psi(t)\sim 1$
&
no derivative loss 
\\
\hline
$1\ll(1+\varphi(t))\psi(t)\ll|\log t|$
&
arbitrarily small derivative loss
\\
\hline
$(1+\varphi(t))\psi(t)\sim|\log t|$
&
finite derivative loss
\\
\hline
$(1+\varphi(t))\psi(t)\gg|\log t|$
&
infinite derivative loss
\\
\hline
\end{tabular}
  
\caption{Singular behavior of $c'(t)$ and $c''(t)$ vs derivative loss}
\label{table:c'-c''}
\end{table}

We observe that in the case where $\varphi(t)$ and $\psi(t)$ are constant functions this is exactly the result of~\cite{1990-CPDE-yamazaki}, while in the case where $\varphi(t)\sim|\log t|$ and $\psi(t)$ is constant this is exactly the result of~\cite{2003-BSM-ColDSaRei}. Again, the derivative loss prescribed by Table~\ref{table:c'-c''} is residually optimal.

\paragraph{\textmd{\textit{Modulus of continuity and first derivatives}}}

In this paper we combine the assumptions on the modulus of continuity and on the first derivative. More precisely, we assume that $c(t)$ is continuous in the closed interval $[0,T_{0}]$, differentiable in the half-open interval $(0,T_{0}]$, and that it satisfies both (\ref{hp:c-omega}) and (\ref{hp:c-theta}) for suitable functions $\omega$ and $\theta$. The case where $\omega(\sigma)\sim\sigma^{\alpha}$ and $\theta(t)\sim1/t^{\beta}$ was considered by F.~Colombini, D.~Del Santo and T.~Kinoshita in~\cite{2002-SNS-ColDSaKin} (see also~\cite{2003-DIE-CicCol}), while more subtle examples where considered by F.~Colombini, D.~Del santo and M.~Reissig in~\cite{2003-BSM-ColDSaRei} and by D.~Del Santo, T.~Kinoshita and M.~Reissig in~\cite{2007-DIE-DSaKinRei} (see also~\cite{2005-ADE-KinRei}). Here we unify and improve some of their results, both on the positive and on the negative side. More important, we show that all those special examples fit into a common framework.

Our main result is that the key quantity
\begin{equation}
m(\lambda):=\min\left\{\lambda\,\omega\left(\frac{1}{\lambda}\right)s+
\int_{s}^{T_{0}}\theta(t)\,dt:s\in[0,T_{0}]\right\}
\qquad
\forall\lambda>0
\label{defn:m-lambda}
\end{equation}
determines the derivative loss of solutions to problem (\ref{defn:AWE})--(\ref{eqn:data}) according to Table~\ref{table:m-lambda}, where the first column refers to the behavior of $m(\lambda)$ as $\lambda\to+\infty$.
\begin{table}[h]
\centering 
\renewcommand{\arraystretch}{1.5}
\begin{tabular}{|c|l|}
\hline
$m(\lambda)\sim 1$
&
well-posedness in Sobolev spaces 
\\
\hline
$1\ll m(\lambda)\ll\log\lambda$
&
arbitrarily small derivative loss
\\
\hline
$m(\lambda)\sim\log\lambda$
&
finite derivative loss
\\
\hline
$m(\lambda)\gg\log\lambda$
&
infinite derivative loss
\\
\hline
\end{tabular}
  
\caption{Modulus of continuity and singular behavior of $c'(t)$ vs derivative loss}
\label{table:m-lambda}
\end{table}

As usual, all results are residually optimal.

\paragraph{\textmd{\textit{Overview of the technique -- Upper bound for the derivative loss}}}

From the technical point of view, it is well-known that the spectral theorem for self-adjoint nonnegative operators reduces the abstract equation (\ref{defn:AWE}) to the family of ordinary differential equations
\begin{equation}
u_{\lambda}''(t)+\lambda^{2}c(t)u_{\lambda}(t)=0,
\label{eqn:u-lambda}
\end{equation}
where $\lambda$ is a positive real parameter. In particular, if one can prove that solutions to
(\ref{eqn:u-lambda}) satisfy an estimate of the form
\begin{equation}
\vl'(t)^{2}+\lambda^{2}\vl(t)^{2}\leq
\left(\vl'(0)^{2}+\lambda^{2}\vl(0)^{2}\right)\exp(\phi_{+}(\lambda,t))
\qquad
\forall t\in[0,T_{0}],
\label{est:energy-phi}
\end{equation}
where $\phi_{+}(\lambda,t)$ is a function independent of initial data, then the behavior of $\phi_{+}(\lambda,t)$ as $\lambda\to +\infty$ determines the maximum possible derivative loss of solutions to (\ref{defn:AWE})--(\ref{eqn:data}) according to Table~\ref{table:phi}.
\begin{table}[h]
\centering 
\renewcommand{\arraystretch}{1.5}
\begin{tabular}{|c|l|}
\hline
$\phi_{+}(\lambda,t)\sim 1$
&
well-posedness in Sobolev spaces 
\\
\hline
$1\ll\phi_{+}(\lambda,t)\ll\log\lambda$
&
arbitrarily small derivative loss
\\
\hline
$\phi_{+}(\lambda,t)\sim\log\lambda$
&
finite derivative loss
\\
\hline
$\phi_{+}(\lambda,t)\gg\log\lambda$
&
infinite derivative loss
\\
\hline
\end{tabular}
  
\caption{Energy growth for solutions to (\ref{eqn:u-lambda}) vs derivative loss}
\label{table:phi}
\end{table}

When $c(t)$ is $\omega$-continuous, the approximated energy estimates introduced in~\cite{dgcs} allow to show that (\ref{est:energy-phi}) holds true with
\begin{equation}
\phi_{+}(\lambda,t)\sim\lambda\,\omega\left(\frac{1}{\lambda}\right)t,
\nonumber
\end{equation}
and this explains all the results of Table~\ref{table:omega} and much more, for example the well-posedness  in Gevrey spaces in the case of \holder\ continuous coefficients. 

In a different direction, when $c(t)$ is of class $C^{1}$ in the closed interval $[0,T_{0}]$, the classical hyperbolic estimates give that (\ref{est:energy-phi}) holds true with (note that in this case there is no dependence on $\lambda$)
\begin{equation}
\phi_{+}(\lambda,t)\sim\int_{0}^{t}|c'(\tau)|\,d\tau.
\nonumber
\end{equation}

In this paper $|c'(t)|$ is not necessarily integrable in a right neighborhood of the origin, and therefore we need to mix the two techniques, in the sense that we use an estimate of the first type in some initial interval $[0,s]$, followed by an estimate of the second type in the remaining interval $[s,t]$, where (\ref{hp:c-theta}) provides a control on the derivative. When we optimize with respect to $s$ we conclude that now (\ref{est:energy-phi}) holds true with $\phi_{+}(\lambda,t)\sim m(\lambda)$, with $m(\lambda)$ given by (\ref{defn:m-lambda}). This is enough to conclude that the derivative loss of solutions to (\ref{defn:AWE}) is at most the one given in Table~\ref{table:m-lambda}. We refer to statement~(1) of Theorem~\ref{thm:main} for the details.

\paragraph{\textmd{\textit{Overview of the technique -- Road map to counterexamples}}}

The main contribution of this paper is the construction of solutions that exhibit a prescribed derivative loss. This is a much more delicate issue, since it requires to show that estimates of the form (\ref{est:energy-phi}) are in some sense optimal. In statement~(2) of Theorem~\ref{thm:main} the idea is to look for coefficients $c(t)$ such that solutions to (\ref{eqn:u-lambda}) satisfy
\begin{equation}
\vl'(t)^{2}+\lambda^{2}\vl(t)^{2}\geq
\left(\vl'(0)^{2}+\lambda^{2}\vl(0)^{2}\right)\exp(\phi_{-}(\lambda,t))
\qquad
\forall t\in[0,T_{0}],
\label{est:energy-phi-below}
\end{equation}
at least on a sequence $\lambda_{n}\to+\infty$, with $\phi_{-}(\lambda,t)\sim m(\lambda)$ for positive times. These coefficients are called ``universal activators'' because the same coefficient induces the exponential growth of a sequence of solutions. They are the fundamental tool in the construction of counterexamples, as we show in Proposition~\ref{prop:ua2idl} for operators that admit an unbounded sequence of eigenvalues, and in Proposition~\ref{prop:ua2dl-gen} for general unbounded self-adjoint operators, which is the case, for example, of the concrete wave equation (\ref{defn:CWE}) on the whole space or an external domain.

Following the path introduced in~\cite{gg:residual,gg:DGCS-critical,gg:CDSR-optimal}, the existence of universal activators is reduced to the existence of families of ``asymptotic activators'', namely families $\{\cl(t)\}$ of coefficients such that solutions to 
\begin{equation}
u_{\lambda}''(t)+\lambda^{2}\cl(t)u_{\lambda}(t)=0
\label{eqn:u-c-lambda}
\end{equation}
satisfy (\ref{est:energy-phi-below}) when $\lambda$ is large enough.

We stress the difference between universal activators, where the same coefficient produces an exponential growth for a sequence of solutions, and asymptotic activators that achieve the same exponential growth by choosing a different coefficient for different values of $\lambda$ (note that in (\ref{eqn:u-c-lambda}) the coefficient does depend on $\lambda$). 

The main point proved in~\cite{gg:CDSR-optimal} is that the existence of sufficiently many families of asymptotic activators, within a certain class of coefficients, implies the existence of a residual set of universal activators in the same class. This is some sort of \emph{``nonlinear uniform boundedness principle''} (where nonlinear refers to the map coefficient $\mapsto$ solutions): if there exist sufficiently many families of objects that show \emph{asymptotically} the optimality of some estimate, then there are residually many objects that show \emph{directly} the optimality of the same estimate. Equivalently, if we can not estimate the norm of $(u(t),u'(t))$ is some space $Y$ in terms of the norm of $(u_{0},u_{1})$ in some space $X$, then there exist solutions such that $(u_{0},u_{1})$ lies in the space $X$, but $(u(t),u'(t))$ does not lie in the space $Y$ for positive times.

Finally, asymptotic activators are produced starting from the usual building blocks, introduced for the first time in~\cite{dgcs}, and then modified and adapted in the subsequent literature. The key observation is that
\begin{equation}
u_{\lambda}(t):=
\frac{1}{\lambda}\sin(\lambda t)
\exp\left(\frac{1}{8}\int_{0}^{t}\ep(s)\sin^{2}(\lambda s)\,ds\right)
\label{defn:ul-biblio}
\end{equation}
grows exponentially and solves (\ref{eqn:u-c-lambda}) with
\begin{equation}
\cl(t):=1-\frac{\ep(t)}{4\lambda}\sin(2\lambda t)
-\frac{\ep'(t)}{8\lambda^{2}}\sin^{2}(\lambda t)
-\frac{\ep(t)^{2}}{64\lambda^{2}}\sin^{4}(\lambda t).
\label{defn:cl-biblio}
\end{equation}

When showing the optimality of the results of Table~\ref{table:omega}, it is enough to choose $\ep(t)$ to be independent of $t$ and equal to $\lambda\,\omega(1/\lambda)$, up to multiplicative constants. In this way the integral in the exponential term of (\ref{defn:ul-biblio}) grows as a multiple of $\lambda\,\omega(1/\lambda)$, as required, and it is possible to control the modulus of continuity of the coefficient because $\cl(t)$ makes oscillations of order $\omega(1/\lambda)$ in intervals with length of order $1/\lambda$ (namely the period of the trigonometric terms). 

In this paper we consider the minimizer $s_{\lambda}$ in the minimum problem (\ref{defn:m-lambda}), and we check which of the two summands is bigger for $s=s_{\lambda}$. When it is the first one, we define again $\ep(t)$ as $\lambda\,\omega(1/\lambda)$ (see Proposition~\ref{prop:omega}). When it is the second one, namely the integral, one would like to choose $\ep(t)=\theta(t)$, so that the integral in the exponential term of (\ref{defn:ul-biblio}) grows as the integral of $\theta(t)$. This choice has several disadvantages, mainly because we need to control $\cl'(t)$, and therefore the presence of $\ep'(t)$ in (\ref{defn:cl-biblio}) forces to assume that $\theta(t)$ is twice differentiable, with a lot of control on its derivatives.

In Proposition~\ref{prop:theta} we overcome this difficulty by choosing $\ep(t)$ equal to a piecewise constant approximation of $\theta(t)$, changing the constant whenever the trigonometric terms vanish. In this way $\cl(t)$ remains Lipschitz continuous, the term with $\ep'(t)$ disappears, and the integral in (\ref{defn:ul-biblio}) is equivalent to a Riemann sum for the integral of $\theta(t)$. The lack of this type of construction, which becomes fundamental when the growth of $\theta(t)$ is close enough to $1/t$, is probably the reason why the previous results in the literature were not optimal.

\paragraph{\textmd{\textit{Related problems and future perspectives}}}

We hope that the methods of this paper, more than the results themselves, could be useful to deal with analogous problems. For sure the nonlinear uniform boundedness principle, namely the general path from asymptotic to universal activators, can be used to show in an efficient way the optimality of many positive results. Indeed, some of the known counterexamples are still stated in the form of impossibility of a certain energy estimate, in the spirit of asymptotic activators, and not as examples of solutions that actually lose derivatives (see for example \cite[Theorem~2.3 and Theorem~2.5]{2006-JDE-CicCol}, or \cite[Theorem~2.7]{2015-JMAA-EbeFitHir}).

In a different direction, we are confident that our techniques could shed some light also on a related problem studied in the last two decades in a series of papers by M.~Reissig and J.~Smith~\cite{2005-HokkMJ-ReiSmi}, F.~Colombini~\cite{2006-JDE-Colombini}, F.~Hirosawa~\cite{2007-MathAnn-Hirosawa,2021-JMAA-Hirosawa}, and M.~Ebert, L.~Fitriana and F.~Hirosawa~\cite{2015-JMAA-EbeFitHir}. They consider the wave equation (\ref{defn:CWE}) with a smooth coefficient $c(t)$ defined for all positive times, with two types of assumptions: the decay of some derivatives of $c(t)$ as $t\to+\infty$, and a ``stabilization condition'', namely some integral control on $|c(t)-c_{\infty}|$, where $c_{\infty}$ is a suitable constant. They are interested in what they call ``generalized energy conservation'', namely the boundedness of the ratio between the energy at time~$t$ and the energy at time~0. There are still some gaps between the positive results and the counterexamples (see for example~\cite[Table~1 and Table~2]{2015-JMAA-EbeFitHir}). The analogy with this paper is plausible: the decay of derivatives at infinity should correspond to the blow-up at the origin, the stabilization condition could correspond to the modulus of continuity, and the general energy conservation to well-posedness with no derivative loss. 

\paragraph{\textmd{\textit{Structure of the paper}}}

This paper is organized as follows. In Section~\ref{sec:statements} we state our main results and some consequences, and we comment on them. In Section~\ref{sec:well-posed} we prove the positive part, namely the energy estimates from above that yield a bound from above for the derivative loss. In Section~\ref{sec:counterexamples} we present the construction of the asymptotic activators, and how they lead to our counterexamples. Finally, in Section~\ref{sec:cor} we prove two corollaries concerning two special cases.


\setcounter{equation}{0}
\section{Statements}\label{sec:statements}

\subsection{Notations and main result}

Let us start by introducing some terminology and some notations.

\begin{defn}[Modulus of continuity]
\begin{em}

A \emph{modulus of continuity} is a function $\omega:[0,+\infty)\to[0,+\infty)$ such that
\begin{itemize}

\item  $\omega(\sigma)>0$ for $\sigma>0$, and $\omega(\sigma)\to 0$ as $\sigma\to 0^{+}$,

\item  the function $\sigma\mapsto\omega(\sigma)$ is nondecreasing,

\item  the function $\sigma\mapsto\sigma/\omega(\sigma)$ is nondecreasing.

\end{itemize}

A function $c:[0,T_{0}]\to\re$ is called $\omega$-continuous if it satisfies (\ref{hp:c-omega}) for some modulus of continuity $\omega$.

\end{em}
\end{defn}

\begin{defn}[Classes of coefficients]\label{defn:PS}
\begin{em}

Let $T_{0}$, $\mu_{1}$, $\mu_{2}$ be positive real numbers with $\mu_{2}>\mu_{1}$. Let $\omega$ be a modulus of continuity, and let $\theta:(0,T_{0})\to(0,+\infty)$ be a continuous and nonincreasing function.

We call $\PS(T_{0},\mu_{1},\mu_{2},\omega,\theta)$ the set of functions $c\in W^{\infty}_{loc}((0,T_{0}))$ that satisfy (\ref{hp:c-sh}) and (\ref{hp:c-omega}) in the pointwise sense, and (\ref{hp:c-theta}) in the almost everywhere sense.

\end{em}
\end{defn}

We observe that $\PS(T_{0},\mu_{1},\mu_{2},\omega,\theta)$ is a complete metric space with respect to the distance induced by the norm of $L^{\infty}((0,T_{0}))$. We observe also that the elements of this space are continuous in $[0,T_{0}]$, and therefore pointwise values $c(t)$ are well defined.

We are now ready to state our main result.

\begin{thm}[Main energy estimates]\label{thm:main}

Let $T_{0}$, $\mu_{1}$, $\mu_{2}$ be positive real numbers with $\mu_{2}>\mu_{1}$. Let $\omega$ be a modulus of continuity, and let $\theta:(0,T_{0})\to(0,+\infty)$ be a continuous and nonincreasing function. For every positive real number $\lambda$, let us define $m(\lambda)$ as in (\ref{defn:m-lambda}). Let us consider the classes of coefficients introduced in Definition~\ref{defn:PS}, and let us introduce the constants
\begin{equation}
M_{1}:=\left(\frac{\max\{1,\mu_{2}\}}{\min\{1,\mu_{1}\}}\right)^{2},
\qquad
M_{2}:=\frac{1}{\mu_{1}}+\frac{1}{\sqrt{\mu_{1}}},
\qquad
M_{3}:=\frac{1}{128\mu_{2}}\min\left\{1,\frac{\sqrt{\mu_{1}}}{\pi}\right\}.
\label{defn:M12}
\end{equation}

Then the following statements hold true.

\begin{enumerate}
\renewcommand{\labelenumi}{(\arabic{enumi})}

\item  \emph{(Energy estimate from above).} For every coefficient $c\in\PS(T_{0},\mu_{1},\mu_{2},\omega,\theta)$ and every positive real number $\lambda$ it turns out that every solution to (\ref{eqn:u-lambda}) satisfies
\begin{equation}
\ul'(t)^{2}+\lambda^{2}\ul(t)^{2}\leq
M_{1}\left(\ul'(0)^{2}+\lambda^{2}\ul(0)^{2}\right)\exp(M_{2}\,m(\lambda))
\qquad
\forall t\in[0,T_{0}].
\label{th:main-above}
\end{equation}

\item  \emph{(Energy estimate from below).} Let us assume that $m(\lambda)\to +\infty$ as $\lambda\to +\infty$.

Then, for every sequence of positive real numbers $\lambda_{n}\to +\infty$, the set of coefficients $c\in\PS(T_{0},\mu_{1},\mu_{2},\omega,\theta)$ such that the solutions to (\ref{eqn:u-lambda}) with initial data $\ul(0)=0$, $\ul'(0)=1$ satisfy
\begin{equation}
\limsup_{n\to +\infty}\left(|u_{\lambda_{n}}'(t)|^{2}+\lambda_{n}^{2}|u_{\lambda_{n}}(t)|^{2}\right)
\exp\left(-M_{3}\,m(\lambda_{n})\strut\right)\geq 1
\qquad
\forall t\in(0,T_{0}]
\nonumber
\end{equation}
is residual. 

\end{enumerate}

\end{thm}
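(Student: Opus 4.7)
For statement (1), the plan is to combine the two classical energy techniques recalled in the introduction, splitting the interval $[0,T_{0}]=[0,s]\cup[s,T_{0}]$ with $s$ arbitrary and then optimizing. On $[0,s]$, where the a priori bound on $|c'|$ is not integrable, I would mollify $c$ at scale $1/\lambda$ to obtain $c_{\ep}$, and work with the approximated energy $E_{\lambda}(t):=\ul'(t)^{2}+\lambda^{2}c_{\ep}(t)\ul(t)^{2}$; its time derivative produces the term $2\lambda^{2}(c_{\ep}-c)\ul\ul'$ controlled by $\lambda\,\omega(1/\lambda)E_{\lambda}(t)$, plus a term involving $|c_{\ep}'|$ which is bounded via the monotonicity of $\sigma\mapsto\omega(\sigma)/\sigma$. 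Gronwall gives $E_{\lambda}(s)\leq M_{1}E_{\lambda}(0)\exp(C\lambda\,\omega(1/\lambda)s)$. On $[s,T_{0}]$, where $|c'(t)|\leq\theta(t)$ is integrable in the almost everywhere sense, I would switch to the standard hyperbolic energy $\ul'(t)^{2}+\lambda^{2}c(t)\ul(t)^{2}$, whose logarithmic derivative is bounded by $|c'(t)|/\mu_{1}\leq\theta(t)/\mu_{1}$, yielding a growth factor $\exp(\mu_{1}^{-1}\int_{s}^{T_{0}}\theta(t)\,dt)$. Taking the infimum over $s$ produces exactly the exponent $M_{2}\,m(\lambda)$ required by (\ref{th:main-above}), with $M_{1}$ absorbing the transition between the two energies.

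For statement (2), I would follow the roadmap sketched in the introduction. For each $\lambda$ let $s_{\lambda}$ attain the minimum in (\ref{defn:m-lambda}); at least one of its two summands is $\geq m(\lambda)/2$. In the first case, in the building block (\ref{defn:ul-biblio})--(\ref{defn:cl-biblio}), I would choose $\ep(t)$ constant equal to a small multiple of $\lambda\,\omega(1/\lambda)$ on $[0,s_{\lambda}]$ and vanishing afterwards, so that the exponent in (\ref{defn:ul-biblio}) grows as $\tfrac{1}{2}\lambda\,\omega(1/\lambda)s_{\lambda}$; the resulting $c_{\lambda}$ lies in $\PS(T_{0},\mu_{1},\mu_{2},\omega,\theta)$ since its oscillations have amplitude comparable to $\omega(1/\lambda)$ on intervals of length $1/\lambda$. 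In the second case I would instead let $\ep(t)$ be the piecewise-constant interpolation of $\theta(t)$ at the nodes $k\pi/\lambda$, so that $\ep$ is constant on every period of $\sin(2\lambda t)$ and of $\sin^{2}(\lambda t)$: the term involving $\ep'(t)$ in (\ref{defn:cl-biblio}) then vanishes, the resulting $c_{\lambda}$ is Lipschitz with $|c_{\lambda}'|\lesssim\theta$, and the exponent in (\ref{defn:ul-biblio}) is a Riemann sum for $\tfrac{1}{2}\int_{s_{\lambda}}^{T_{0}}\theta(t)\,dt$. Either way I obtain, for every $\lambda$, a coefficient forcing the energy of a solution to grow by $\exp(c\,m(\lambda))$ at every positive time, namely a family of asymptotic activators in the sense of \cite{gg:CDSR-optimal}.

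To pass from these asymptotic activators, one per $\lambda$, to a residual set of universal activators that work along the whole prescribed sequence $\lambda_{n}\to+\infty$, I would apply verbatim the nonlinear uniform boundedness principle proved in \cite{gg:CDSR-optimal}: denseness of the asymptotic activators in $\PS(T_{0},\mu_{1},\mu_{2},\omega,\theta)$ (obtained by replacing the true coefficient on a short interval near $0$ with one of the building blocks) together with continuity of the solution map in $L^{\infty}$ turns the failure of the lower estimate at some $t\in(0,T_{0}]$ into an $F_{\sigma}$ set of empty interior, whose complement is the required residual set. The main technical obstacle lives in the second case of the activator construction, in the singular regime $\theta(t)\sim 1/t$ with $s_{\lambda}\to 0$: one must verify that the piecewise-constant trick above is compatible \emph{simultaneously} with the strict hyperbolicity (\ref{hp:c-sh}), with the modulus of continuity (\ref{hp:c-omega}), and with the pointwise bound $|c_{\lambda}'|\leq\theta$ on the whole of $(0,T_{0}]$, each of which becomes tight near the origin. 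This joint check, which the literature had not carried out when the growth of $\theta$ approaches $1/t$, is what yields the sharpness of the constant $M_{3}$ and the optimality asserted in Table~\ref{table:m-lambda}.
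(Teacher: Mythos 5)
Your argument for statement~(1) is essentially the paper's proof: mollification at scale $1/\lambda$ and the approximated hyperbolic energy on $[0,s]$, the standard hyperbolic energy with $|c'|\leq\theta$ on $[s,T_{0}]$, then optimization in $s$; this part is fine. For statement~(2) your roadmap (split according to which summand dominates at the minimizer $s_{\lambda}$, constant $\ep\sim\lambda\,\omega(1/\lambda)$ in the first case, piecewise--constant $\ep$ following $\theta$ with nodes where the trigonometric factors vanish in the second case, then the asymptotic-to-universal activator principle of \cite{gg:CDSR-optimal}) is also the paper's, but there is a genuine gap in the step where you claim density: you propose to obtain asymptotic activators converging to an \emph{arbitrary} $c\in\PS(T_{0},\mu_{1},\mu_{2},\omega,\theta)$ by ``replacing the true coefficient on a short interval near $0$ with one of the building blocks''. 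This does not work as stated. First, the block oscillates around a constant $\gamma^{2}$, so at the right junction $b_{\lambda}$ the glued coefficient is in general discontinuous (and, even after forcing continuity, the $\omega$-continuity across the junction fails for coefficients that already saturate the inequality $|c(t)-c(s)|\leq\omega(|t-s|)$, and the strict hyperbolicity can be violated if $c(0)$ is close to $\mu_{1}$ or $\mu_{2}$). Second, and more fatally, the exponential gain produced on $[0,b_{\lambda}]$ can be destroyed afterwards: for a general element of the class one only knows $|c'|\leq\theta$ a.e., so on $[b_{\lambda},T_{0}]$ the best lower bound for the hyperbolic energy is a factor
\begin{equation}
\exp\left(-\frac{1}{\mu_{1}}\int_{b_{\lambda}}^{T_{0}}\theta(t)\,dt\right),
\nonumber
\end{equation}
which is \emph{not} bounded away from zero uniformly in $\lambda$ because $b_{\lambda}\to 0$ and $\theta\notin L^{1}$ near the origin (indeed the true coefficient may itself behave like a time-reversed activator there). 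Hence the glued coefficients are not asymptotic activators in the sense required by the Baire principle.

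The paper's fix, which is the missing idea, is a two-step structure: first approximate $c$ by elements of an auxiliary dense class $\mathcal{D}$ of coefficients that are \emph{constant} equal to $\gamma^{2}\in(\mu_{1},\mu_{2})$ on a fixed interval $[0,T_{1}]$ and satisfy the strict bound $|c_{*}(t)-c_{*}(s)|\leq(1-\eta)\omega(|t-s|)$ (density is proved by freezing $c$ near $0$ and taking a convex combination with the midpoint $(\mu_{1}+\mu_{2})/2$); then build the activators only for such $c_{*}$, placing the perturbation interval $(a_{\lambda},b_{\lambda})$ \emph{inside} $(0,T_{1})$ with endpoints at multiples of $2\pi/(\gamma\lambda)$, with $b_{\lambda}\to 0$ and $\omega(b_{\lambda})\leq\eta\,\omega(T_{1}-b_{\lambda})$ so that the slack $\eta$ absorbs the junction, and with the threshold conditions $\theta(b_{\lambda})\geq\nu_{1}\lambda\,\omega(1/\lambda)$ (in the $\omega$-case) and $\theta(a_{\lambda})\leq\lambda\,\omega(1/\lambda)$ (in the $\theta$-case) guaranteeing simultaneously $|c_{\lambda}'|\leq\theta$ and $\omega$-continuity. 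The constancy of $c_{*}$ on $[b_{\lambda},T_{1}]$ is exactly what makes the post-perturbation decay a \emph{fixed} constant $\exp(-\mu_{1}^{-1}\int_{T_{1}}^{T_{0}}\theta)$, independent of $\lambda$; relatedly, in your $\theta$-case the Riemann sum must be confined to an interval $[a_{\lambda},b_{\lambda}]$ with $b_{\lambda}\to 0$ capturing a fixed fraction of $m(\lambda)$ (the paper stops at the point $\widehat{s}_{\lambda}$ where half of $\int_{s_{\lambda}}^{T_{0}}\theta$ has been collected), rather than running up to $T_{0}$ where the coefficient is no longer constant. Without this intermediate dense class and the associated slack parameters, the passage from the building blocks to a residual set of universal activators does not go through.
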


The energy estimates of Theorem~\ref{thm:main} can be applied to the abstract wave equation, yielding the following result.

\begin{thm}[Derivative loss for the abstract wave equation]\label{thm:der-loss}

Let $T_{0}$, $\mu_{1}$, $\mu_{2}$, $\omega$, $\theta$, and $m(\lambda)$ be as in Theorem~\ref{thm:main}. Let us consider the classes of coefficients introduced in Definition~\ref{defn:PS}. Let us consider the abstract equation (\ref{defn:AWE}), where $A$ is a linear nonnegative self-adjoint operator in some real Hilbert space $H$.

Then the following statements hold true.
\begin{enumerate}
\renewcommand{\labelenumi}{(\arabic{enumi})}

\item  \emph{(Estimate from above for the derivative loss).} For every $c\in\PS(T_{0},\mu_{1},\mu_{2},\omega,\theta)$ the derivative loss of solutions to problem (\ref{defn:AWE})--(\ref{eqn:data}) is at most the one prescribed by Table~\ref{table:m-lambda}.

\item  \emph{(Optimality of the derivative loss).} If the operator $A$ is unbounded, then the set of coefficients $c\in\PS(T_{0},\mu_{1},\mu_{2},\omega,\theta)$ for which problem (\ref{defn:AWE})--(\ref{eqn:data}) exhibits exactly the derivative loss prescribed by Table~\ref{table:m-lambda} is residual.

\end{enumerate}

\end{thm}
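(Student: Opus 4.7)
\quad The strategy is to reduce the abstract equation (\ref{defn:AWE}) to the family of scalar ODEs (\ref{eqn:u-lambda}) via the spectral theorem, and then transfer the ODE estimates of Theorem~\ref{thm:main} to derivative-loss statements for the abstract equation. If $\{E_{\mu}\}_{\mu\ge 0}$ is the spectral resolution of $A$ and we set $\lambda=\sqrt{\mu}$, each spectral mode evolves independently according to (\ref{eqn:u-lambda}) with initial data inherited from the spectral projections of $u_{0}$ and $u_{1}$, and the natural energy at regularity level $\beta$ admits the representation
\begin{equation}
\|A^{\beta}u'(t)\|^{2}+\|A^{\beta+1/2}u(t)\|^{2}=\int_{0}^{\infty}\lambda^{4\beta}\bigl(u_{\lambda}'(t)^{2}+\lambda^{2}u_{\lambda}(t)^{2}\bigr)\,d\nu(\lambda)
\nonumber
\end{equation}
for a suitable spectral measure $\nu$ encoding the initial data.

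For statement~(1), I plug the pointwise bound (\ref{th:main-above}) into this representation. In the row $m(\lambda)\sim 1$ the exponential factor is bounded, hence there is no derivative loss. In the row $1\ll m(\lambda)\ll\log\lambda$, for every $\ep>0$ one has $\exp(M_{2}\,m(\lambda))\le C_{\ep}\lambda^{4\ep}$ for $\lambda$ large, and the extra factor is absorbed into a loss of $\ep$ units of regularity, yielding the arbitrarily small loss. In the row $m(\lambda)\sim\log\lambda$ the exponential is dominated by a fixed power $\lambda^{4\delta_{0}}$, producing a uniform finite loss $\delta_{0}$ depending only on $M_{2}$ and on the implicit constant. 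The row $m(\lambda)\gg\log\lambda$ requires no upper bound at all, since infinite loss is simply the worst case allowed.

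For statement~(2), I combine Theorem~\ref{thm:main}(2) with Propositions~\ref{prop:ua2idl} and~\ref{prop:ua2dl-gen} announced in the introduction. Given a sequence $\lambda_{n}\to+\infty$ chosen to witness the regime under consideration, Theorem~\ref{thm:main}(2) supplies a residual set $\mathcal{R}\subset\PS(T_{0},\mu_{1},\mu_{2},\omega,\theta)$ of universal activators along that sequence. For each $c\in\mathcal{R}$ the cited propositions then convert the per-mode energy growth into genuine initial data on $H$ whose solution exhibits the prescribed derivative loss: in the presence of an unbounded sequence of eigenvalues of $A$ one superposes eigenvectors at $\mu_{n}=\lambda_{n}^{2}$ with carefully tuned coefficients, while in the general unbounded case one resorts to thin spectral projections around $\lambda_{n}^{2}$. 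Residuality is preserved because $\mathcal{R}$ is already residual and the choice of the initial data does not depend on the particular $c\in\mathcal{R}$.

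The main obstacle is this last transfer step. The per-mode lower bound from Theorem~\ref{thm:main}(2) only holds along a subsequence of frequencies, so to exhibit a single solution that fails to belong to $D(A^{-\gamma+1/2})\times D(A^{-\gamma})$ for every $\gamma>0$ (the infinite loss row) one must perform a diagonal construction, distributing the Hilbert mass among the frequencies $\lambda_{n}$ so that $(u_{0},u_{1})$ is as smooth as we wish while the evolved energy diverges at the rate dictated by the relevant row of Table~\ref{table:m-lambda}. Propositions~\ref{prop:ua2idl} and~\ref{prop:ua2dl-gen} are engineered precisely to realize this diagonalization at the abstract level, and once they are available the proof reduces to checking that their hypotheses are met by the residual set produced in Theorem~\ref{thm:main}(2).
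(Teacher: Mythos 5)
Your proposal is correct and follows essentially the same route as the paper: statement~(1) is the standard spectral reduction of (\ref{defn:AWE}) to (\ref{eqn:u-lambda}) combined with the pointwise bound (\ref{th:main-above}) and the dictionary of Table~\ref{table:phi}, while statement~(2) combines the residual set of universal activators provided by Theorem~\ref{thm:main}(2) with Propositions~\ref{prop:ua2idl} and~\ref{prop:ua2dl-gen}, exactly as the paper does. The only point to keep straight is that in the general case the sequence $\{\lambda_{n}\}$ is dictated by the spectrum of $A$ through Proposition~\ref{prop:ua2dl-gen}, and Theorem~\ref{thm:main}(2) is then invoked for that particular sequence, which is harmless since it holds for every unbounded sequence.
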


\begin{rmk}
\begin{em}

One could define the class $\PS(T_{0},\mu_{1},\mu_{2},\omega,\theta)$ also by considering only  coefficients $c(t)$ that are of class $C^{1}$ for positive times, so that now (\ref{hp:c-theta}) can be asked in the pointwise sense. In this case a structure of complete metric space is induced by the norm 
\begin{equation}
\|c\|_{\theta}:=\max\{|c(t)|:t\in[0,T_{0}]\}+\sup\left\{\frac{|c'(t)|}{\theta(t)}:t\in(0,T_{0})\right\}.
\nonumber
\end{equation}

All the previous results hold true also in this restricted class. This is actually the approach that was carried on in~\cite{gg:CDSR-optimal}, and it delivers a residual (in this new space) class of counterexamples that are of class $C^{1}$ for positive times. On  the other hand, as explained in~\cite[section~4.5]{gg:CDSR-optimal}, it is always possible to produce counterexamples of class $C^{\infty}$. 

\end{em}
\end{rmk}

\subsection{Some examples}

Let us discuss the consequences of Theorem~\ref{thm:der-loss} in some special cases. A first result is that well-posedness with no derivative loss holds true in the class of coefficients $\PS(T_{0},\mu_{1},\mu_{2},\omega,\theta)$ if and only if either $\theta(t)$ guarantees that $c(t)$ has bounded variation, or $\omega(\sigma)$ guarantees that $c(t)$ is Lipschitz continuous.

\begin{cor}\label{cor:int-infty}

Let us consider the same setting of Theorem~\ref{thm:der-loss}. 

Let us assume that the operator $A$ is unbounded, and that
\begin{equation}
\lim_{\sigma\to 0^{+}}\frac{\sigma}{\omega(\sigma)}=0
\qquad\quad\text{and}\quad\qquad
\int_{0}^{T_{0}}\theta(t)\,dt=+\infty.
\label{hp:cor-int-infty}
\end{equation}

Then the set of coefficients $c\in\PSp(T_{0},\mu_{1},\mu_{2},\theta,\omega)$ for which problem (\ref{defn:AWE})--(\ref{eqn:data}) exhibits at least an arbitrarily small derivative loss is residual.

\end{cor}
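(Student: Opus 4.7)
The plan is to deduce the corollary directly from part~(2) of Theorem~\ref{thm:der-loss}, by verifying that, under the two hypotheses in (\ref{hp:cor-int-infty}), the key quantity $m(\lambda)$ defined in (\ref{defn:m-lambda}) satisfies $m(\lambda)\to+\infty$ as $\lambda\to+\infty$. Once this divergence is established, every coefficient in $\PS(T_0,\mu_1,\mu_2,\omega,\theta)$ falls in one of the last three rows of Table~\ref{table:m-lambda}, each of which entails at least an arbitrarily small derivative loss, so the residuality assertion is an immediate specialization of Theorem~\ref{thm:der-loss}(2).

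To prove the divergence of $m(\lambda)$, I would argue by contradiction. Assume there exist a sequence $\lambda_n\to+\infty$ and a constant $C$ with $m(\lambda_n)\le C$ for every $n$. For each $n$, pick $s_n\in[0,T_0]$ with
\begin{equation}
\lambda_n\,\omega(1/\lambda_n)\,s_n+\int_{s_n}^{T_0}\theta(t)\,dt\le C+1,
\nonumber
\end{equation}
which is possible since the infimum in (\ref{defn:m-lambda}) is taken over a compact interval (and attained, once one interprets the integral as $+\infty$ at $s=0$ when $\theta$ is not integrable near the origin; otherwise one uses approximate minimizers). Passing to a subsequence, I may assume that either $s_n\to 0$, or $s_n\ge\delta$ for some $\delta>0$.

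In the first case, monotone convergence together with the second hypothesis in (\ref{hp:cor-int-infty}) forces $\int_{s_n}^{T_0}\theta(t)\,dt\to+\infty$, which contradicts the displayed bound. In the second case, I rewrite the first summand as $\lambda_n\,\omega(1/\lambda_n)\,s_n\ge\delta\cdot\omega(1/\lambda_n)/(1/\lambda_n)$, and invoke the first hypothesis in (\ref{hp:cor-int-infty}), which is exactly the statement that $\omega(\sigma)/\sigma\to+\infty$ as $\sigma\to0^{+}$; this makes the first summand diverge and again contradicts the bound. Hence $m(\lambda)\to+\infty$, which concludes the argument.

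There is no real obstacle: the whole corollary is a packaging of Theorem~\ref{thm:der-loss}(2), and the only small point of care is the slightly asymmetric role of the two hypotheses in (\ref{hp:cor-int-infty}) — one handles the regime where the optimal cut-off $s_\lambda$ collapses to zero, the other handles the regime where it stays bounded away from zero. Both limits of the minimizer's behavior are ruled out simultaneously precisely because (\ref{hp:cor-int-infty}) is imposed as a \emph{conjunction}; dropping either of its two conditions would leave open the possibility $m(\lambda)\sim 1$, which is consistent with well-posedness in Sobolev spaces.
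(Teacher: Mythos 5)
Your proposal is correct and follows essentially the same route as the paper: the paper also reduces the corollary to showing $m(\lambda)\to+\infty$ and then, extracting a convergent (sub)sequence of minimizers $s_{n}$, treats the two cases $s_{n}\to 0$ (where the divergence of $\int_{0}^{T_{0}}\theta$ makes the integral term blow up) and $s_{n}$ bounded away from $0$ (where $\sigma/\omega(\sigma)\to 0$ makes the term $\lambda_{n}\,\omega(1/\lambda_{n})s_{n}$ blow up). Your contradiction framing with approximate minimizers is only a cosmetic variant of this argument.
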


Let us examine now the case where $\theta(t)\sim 1/t$. According to Table~\ref{table:theta} this assumption guarantees that the derivative loss is at most finite. Now we can show that the derivative loss is actually arbitrarily small if $c(t)$ is $\alpha$-\holder\ continuous for every $\alpha\in(0,1)$, and this assumption is optimal.

\begin{cor}\label{cor:1/t}

Let us consider the same setting of Theorem~\ref{thm:der-loss}.

\begin{enumerate}
\renewcommand{\labelenumi}{(\arabic{enumi})}

\item  \emph{(Arbitrarily small derivative loss).} Let us assume that the modulus of continuity satisfies
\begin{equation}
\forall\alpha\in(0,1)
\qquad
\lim_{\sigma\to 0^{+}}\frac{\omega(\sigma)}{\sigma^{\alpha}}=0,
\label{hp:cor-1/t-omega-1}
\end{equation}
and the function $\theta(t)$ satisfies
\begin{equation}
\limsup_{t\to 0^{+}}t\cdot\theta(t)<+\infty.
\label{hp:cor-1/t-theta-1}
\end{equation}

Then, for every propagation speed $c\in\PSp(T_{0},\mu_{1},\mu_{2},\theta,\omega)$, problem (\ref{defn:AWE})--(\ref{eqn:data}) has at most an arbitrarily small derivative loss.

\item  \emph{(Finite derivative loss).} Let us assume that the operator $A$ is unbounded, that the modulus of continuity satisfies
\begin{equation}
\exists\alpha\in(0,1)
\qquad
\liminf_{\sigma\to 0^{+}}\frac{\omega(\sigma)}{\sigma^{\alpha}}>0,
\label{hp:cor-1/t-omega-2}
\end{equation}
and that the function $\theta(t)$ satisfies
\begin{equation}
\liminf_{t\to 0^{+}}t\cdot\theta(t)>0.
\label{hp:cor-1/t-theta-2}
\end{equation}

Then the set of propagation speeds $c\in\PSp(T_{0},\mu_{1},\mu_{2},\theta,\omega)$ for which problem (\ref{defn:AWE})--(\ref{eqn:data}) has at least a finite derivative loss is residual.

\end{enumerate}

\end{cor}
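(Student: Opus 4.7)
Both statements reduce to the asymptotic analysis of $m(\lambda)$ as $\lambda\to+\infty$ followed by an appeal to Theorem~\ref{thm:der-loss}. In light of Table~\ref{table:m-lambda}, the conclusion of~(1) amounts to $m(\lambda)=o(\log\lambda)$, while for~(2) it is enough to place $c$ in the ``finite'' or ``infinite'' row, for instance by showing $\liminf_{\lambda\to+\infty} m(\lambda)/\log\lambda>0$: this already ensures $m(\lambda)\to+\infty$, so that Theorem~\ref{thm:der-loss}(2) applies and provides the residual set of coefficients with at least a finite derivative loss. Thus the entire argument becomes a careful minimization of $s\mapsto\lambda\,\omega(1/\lambda)\,s+\int_{s}^{T_0}\theta(t)\,dt$.

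For~(1) I would fix an arbitrary $\beta\in(0,1)$, choose $\alpha\in(1-\beta,1)$, and test the minimum problem defining $m(\lambda)$ with $s_\lambda:=\lambda^{-\beta}$. Hypothesis (\ref{hp:cor-1/t-omega-1}) yields, for every $\eta>0$ and all $\lambda$ large enough, $\omega(1/\lambda)\leq \eta\lambda^{-\alpha}$, whence $\lambda\,\omega(1/\lambda)\,s_\lambda\leq\eta\,\lambda^{1-\alpha-\beta}\to 0$. Hypothesis (\ref{hp:cor-1/t-theta-1}) produces $M,t_0>0$ with $\theta(t)\leq M/t$ on $(0,t_0]$, so that $\int_{s_\lambda}^{T_0}\theta(t)\,dt\leq M\log(t_0/s_\lambda)+O(1)=M\beta\log\lambda+O(1)$ as $\lambda\to+\infty$. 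Combining these two bounds gives $m(\lambda)/\log\lambda\leq M\beta+o(1)$, and since $\beta\in(0,1)$ is arbitrary, $m(\lambda)=o(\log\lambda)$.

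For~(2) I would convert the hypotheses into explicit pointwise lower bounds: (\ref{hp:cor-1/t-omega-2}) provides $\alpha\in(0,1)$ and $c_2,\sigma_0>0$ with $\omega(\sigma)\geq c_2\sigma^\alpha$ on $(0,\sigma_0]$, and (\ref{hp:cor-1/t-theta-2}) provides $c_1,t_1>0$ with $\theta(t)\geq c_1/t$ on $(0,t_1]$. For every $s\in(0,t_1)$ and every $\lambda$ sufficiently large these yield $\lambda\,\omega(1/\lambda)\,s+\int_{s}^{T_0}\theta(t)\,dt\geq c_2\lambda^{1-\alpha}s+c_1\log(t_1/s)$. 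The auxiliary function $s\mapsto c_2\lambda^{1-\alpha}s+c_1\log(t_1/s)$ is explicitly minimized at $s^{*}=c_1/(c_2\lambda^{1-\alpha})$, with minimum value $c_1(1-\alpha)\log\lambda+O(1)$; the range $s\geq t_1$ contributes a value that blows up with $\lambda$ and is therefore irrelevant. Hence $m(\lambda)\geq c_1(1-\alpha)\log\lambda+O(1)$, which is what is needed. The main subtlety in both parts is that neither hypothesis alone determines the rate of $m(\lambda)$: the two summands compete, and the correct test point $s_\lambda$ must be read off jointly from $\omega$ and $\theta$; in particular in~(1) the exponent $\beta$ is chosen arbitrarily first and the exponent $\alpha$ is then selected in function of $\beta$ by means of hypothesis (\ref{hp:cor-1/t-omega-1}).
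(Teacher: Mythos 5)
Your proposal is correct and follows essentially the same route as the paper: both parts reduce to estimating $m(\lambda)/\log\lambda$ via the minimum problem (\ref{defn:m-lambda}), using a power-of-$\lambda$ test point together with $\theta(t)\leq M/t$ for the upper bound in~(1), and pointwise lower bounds $\omega(\sigma)\geq c_2\sigma^{\alpha}$, $\theta(t)\geq c_1/t$ with an explicit minimization in $s$ for the lower bound in~(2), before invoking Theorem~\ref{thm:der-loss}. The only differences are cosmetic (your decoupled exponents $\beta$ and $\alpha$ in~(1), and the localized bounds with $O(1)$ bookkeeping, versus the paper's choice $s=\lambda^{\alpha-1}$ and global constants).
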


Finally, let us examine the case where $\theta(t)\gg 1/t$. In this case problem (\ref{defn:AWE})--(\ref{eqn:data}) can exhibit any type of derivative loss, depending on the modulus of continuity $\omega$. The modulus of continuity that guarantees a finite derivative loss is always stronger than any \holder\ modulus $\sigma^{\alpha}$ with $\alpha\in(0,1)$, but weaker that the classical $\sigma|\log\sigma|$ that guarantees a finite derivative loss even without any assumption on $c'(t)$. In Table~\ref{table:theta-omega} we display, for some special choices of $\theta(t)$, the moduli of continuity that guarantee that $m(\lambda)\sim\log\lambda$, and hence a finite derivative loss. As usual, these choices of $\omega(\sigma)$ represent the threshold between the arbitrary small derivative loss and the infinite derivative loss. 

\begin{table}[h]
\centering 
\renewcommand{\arraystretch}{2.3}
\begin{tabular}{|c|c|}
\hline
$\theta(t)\sim\dfrac{|\log t|}{t}$
&
$\exists R>0\quad\omega(\sigma)\sim\sigma\exp\left(R\,|\log\sigma|^{1/2}\right)$ 
\\[1ex]
\hline
$\theta(t)\sim\dfrac{1}{t^{1+\beta}}\quad(\beta>0)$
&
$\omega(\sigma)\sim\sigma|\log\sigma|^{1+1/\beta}$
\\[1ex]
\hline
$\theta(t)\sim\exp(1/t)$
&
$\omega(\sigma)\sim\sigma|\log\sigma|\cdot\log(|\log\sigma|)$
\\[1ex]
\hline
$\theta(t)\sim\dfrac{1}{t^{\beta}}\exp(1/t)\quad(\beta>0)$
&
$\omega(\sigma)\sim\sigma|\log\sigma|\cdot\log(|\log\sigma|)$
\\[1ex]
\hline
\end{tabular}
  
\caption{Examples of finite derivative loss when $\theta(t)\gg 1/t$}
\label{table:theta-omega}
\end{table}

\begin{rmk}
\begin{em}

Some of the choices of Table~\ref{table:theta-omega} are considered in the previous literature, but without obtaining the optimal result. For example, the modulus $\omega(\sigma)$ of the first row is considered in~\cite[Section~4.5]{2005-ADE-KinRei}, where a finite derivative loss is obtained with the correct condition $\theta(t)\sim|\log t|/t$, and in~\cite[Example~1.2(ii)]{2003-BSM-ColDSaRei}, where an infinite derivative loss is obtained, but only with the stronger condition $\theta(t)\gg|\log t|^{2}/t$. The modulus $\omega(\sigma)$ of the second row is considered in~\cite[Example~1.2(i)]{2003-BSM-ColDSaRei}, where an infinite derivative loss is obtained, but only with the stronger condition $\theta(t)\gg1/t^{1+\beta}\cdot\log^{1+\beta}(|\log t|)$. 

Similarly, the choices of $\theta(t)$ of the second and the fourth row (the latter in the special case $\beta=2$) are considered in~\cite[Example~2.1 and Example~2.2]{2007-DIE-DSaKinRei}, and in both cases a finite derivative loss is obtained with a stronger modulus of continuity, and there is no mention of infinite and arbitrarily small derivative loss.

\end{em}
\end{rmk}

\subsection{Comments}

We conclude by speculating on our main results.

\begin{rmk}[Limit cases]\label{rmk:limit-cases}
\begin{em}

The cases where one prescribes only the modulus of continuity $\omega(\sigma)$, or only the blow-up rate $\theta(t)$ of the derivative, can be included in Theorem~\ref{thm:main} and Theorem~\ref{thm:der-loss} as special limit cases.

If we prescribe only the modulus of continuity $\omega(\sigma)$, we can think that $\theta(t)\equiv +\infty$, and therefore imagine that in (\ref{defn:m-lambda}) the minimum is attained when $s=T_{0}$. We obtain that $m(\lambda)=\lambda\,\omega(1/\lambda)T_{0}$, which explains the results of Table~\ref{table:omega}.

If we prescribe only $\theta(t)$, and of course also the strict hyperbolicity condition, we can think  to extend the notion of modulus of continuity in order to include the border-line case in which $\omega(\sigma)\equiv\mu_{2}-\mu_{1}$, and then define
\begin{equation}
m(\lambda)=\min\left\{(\mu_{2}-\mu_{1})\lambda\,s+\int_{s}^{T_{0}}\theta(t)\,dt:s\in[0,T_{0}]\right\}.
\nonumber
\end{equation}

The minimum is attained when $\theta(s)=(\mu_{2}-\mu_{1})\lambda$, and with some standard calculus we obtain all the results of Table~\ref{table:theta}.

\end{em}
\end{rmk}

\begin{rmk}[Quantitative estimate of the derivative loss]
\begin{em}

In the cases where $m(\lambda)\sim\log\lambda$, and more precisely
\begin{equation}
0<\liminf_{\lambda\to +\infty}\frac{m(\lambda)}{\log\lambda}\leq
\limsup_{\lambda\to +\infty}\frac{m(\lambda)}{\log\lambda}<+\infty,
\nonumber
\end{equation}
the liminf and limsup above provide, respectively, an estimate from below and from above for the finite derivative loss, namely for the constant $\delta$ that appears in the definition. 

\end{em}
\end{rmk}

\begin{rmk}[Progressive vs instantaneous derivative loss]
\begin{em}

There is a subtle difference in the derivative loss between the case where only the modulus of continuity is prescribed, and the cases where we assume $c(t)$ to be differentiable for positive times. For the sake of simplicity, let us limit ourselves to the finite derivative loss.

In the case where one prescribes only the modulus of continuity $\omega(\sigma)\sim\sigma|\log\sigma|$, the finite derivative loss is in general \emph{progressive} in the sense that, if $(u_{0},u_{1})\in D(A^{\beta+1/2})\times D(A^{\beta})$ for some $\beta$, then $(u(t),u'(t))\in D(A^{\beta-\delta t+1/2})\times D(A^{\beta-\delta t})$ for positive times. In words, this means that the derivative loss increases with time, and tends to~0 as $t\to 0^{+}$.

When $c(t)$ is of class $C^{1}$ for positive times, then any form of derivative loss is \emph{instantaneous}, and in particular a finite derivative does not tend to~0 as $t\to 0^{+}$ (but of course now $\omega(\sigma)\gg\sigma|\log\sigma|$). After the initial loss of regularity, in this case there is no further loss of derivatives, in the sense that the implication
\begin{equation}
(u(t_{0}),u'(t_{0}))\in D(A^{\gamma+1/2})\times D(A^{\gamma})
\quad\Longrightarrow\quad
(u(t),u'(t))\in D(A^{\gamma+1/2})\times D(A^{\gamma})
\nonumber
\end{equation}
holds true for every $0<t_{0}\leq t\leq T_{0}$ and every $\gamma$. In words, this means that the singular behavior of $c(t)$ in the origin is responsible for the instantaneous loss of derivatives but, after the initial loss, the regularity of solutions is preserved by the smoothness of $c(t)$.

\end{em}
\end{rmk}

\begin{rmk}[Well-posedness in Gevrey spaces]
\begin{em}

The consequences of Theorem~\ref{thm:main} go far beyond Theorem~\ref{thm:der-loss}, and in particular beyond the classification of derivative loss according to Table~\ref{table:m-lambda}. Indeed, the behavior of $m(\lambda)$ as $\lambda\to +\infty$ provides a sharp ``measure'' of the derivative loss, even when it is infinite or arbitrarily small. The formalization of this idea relies on the notion of generalized Gevrey spaces, or Gevrey distributions (we refer to~\cite[Definition~2.2]{gg:DGCS-critical} for more details on the abstract functional setting for abstract wave equations). Just to give some examples, let us stick to standard Gevrey spaces. The positive side is represented by well-posedness results, as follows.
\begin{itemize}

\item  If we assume that $\omega(\sigma)=\sigma^{\alpha}$ for some $\alpha\in(0,1)$, and we have no informations about the derivative, then we can assume (as explained in Remark~\ref{rmk:limit-cases}) that $m(\lambda)\sim\lambda\,\omega(1/\lambda)=\lambda^{1-\alpha}$, which implies the classical result of~\cite{dgcs} according to which the problem is well-posed in Gevrey spaces of order $s\leq(1-\alpha)^{-1}$.

\item  If we assume that $\theta(t)=1/t^{\beta}$ for some $\beta>1$, and we have no information about the modulus of continuity, then we obtain (as explained in Remark~\ref{rmk:limit-cases}) that $m(\lambda)\sim\lambda^{(\beta-1)/\beta}$, which implies the classical result according to which the problem is well-posed in Gevrey spaces of order $s\leq\beta/(\beta-1)$ (see~\cite[Theorem~2]{2002-SNS-ColDSaKin}).

\item  If we ask both conditions, namely that $\omega(\sigma)=\sigma^{\alpha}$ and $\theta(t)=1/t^{\beta}$, then with some standard calculus we obtain that $m(\lambda)\sim\lambda^{(1-\alpha)(\beta-1)/\beta}$, and therefore the problem is well-posed in Gevrey spaces of order $s<\beta(\beta-1)^{-1}(1-\alpha)^{-1}$ (see~\cite[Theorem~3]{2002-SNS-ColDSaKin}). This is one more example of ``collaboration'' between the modulus of continuity and the control on the derivative in order to provide well-posedness results for less regular data.

\end{itemize}

The negative side is a derivative loss from Gevrey spaces to Gevrey distributions. For example, in the case where $\omega(\sigma)=\sigma^{\alpha}$ with $\alpha\in(0,1)$, and there are no informations on the derivative, there exist solutions whose initial data are in the Gevrey space of order $s$ for every $s>(1-\alpha)^{-1}$, and such that for positive times they do not belong to the space of Gevrey distributions of order $s$ for every $s>(1-\alpha)^{-1}$.

\end{em}
\end{rmk}


\setcounter{equation}{0}
\section{Energy estimates from above}\label{sec:well-posed}

In this section we prove statement~(1) of Theorem~\ref{thm:main}, which implies in a standard way also statement~(1) of Theorem~\ref{thm:der-loss}.

To begin with, let us extend $c(t)$ to the whole half-line $t\geq 0$ by setting
\begin{equation}
\widehat{c}(t):=\begin{cases}
c(t)  & \text{if }t\leq T_{0}, \\
c(T_{0})  & \text{if }t\geq T_{0}.
\end{cases}
\nonumber
\end{equation}

For every $\ep>0$ let us set
\begin{equation}
c_{\ep}(t):=\frac{1}{\ep}\int_{t}^{t+\ep}\widehat{c}(s)\,ds
\qquad
\forall t\geq 0.
\nonumber
\end{equation}

Then it turns out that $c_{\ep}\in C^{1}([0,T_{0}])$ and satisfies the following estimates
\begin{equation}
\mu_{1}\leq c(t)\leq\mu_{2},
\qquad
|c_{\ep}(t)-c(t)|\leq\omega(\ep),
\qquad
|c_{\ep}'(t)|\leq\frac{\omega(\ep)}{\ep}
\label{est:cep}
\end{equation}
for every $t\in[0,T_{0}]$. Following~\cite{dgcs} we consider the usual Kovaleskyan energy
\begin{equation}
E_{\lambda}(t):=\ul'(t)^{2}+\lambda^{2}\ul(t)^{2},
\label{defn:E-kov}
\end{equation}
the usual hyperbolic energy
\begin{equation}
F_{\lambda}(t):=\ul'(t)^{2}+\lambda^{2}c(t)\ul(t)^{2},
\label{defn:E-hyp}
\end{equation}
and the approximated hyperbolic energy
\begin{equation}
F_{\ep,\lambda}(t):=\ul'(t)^{2}+\lambda^{2}c_{\ep}(t)\ul(t)^{2}.
\label{defn:E-hyp-mod}
\end{equation}

These energies are equivalent in the sense that
\begin{equation}
\min\{1,\mu_{1}\}E_{\lambda}(t)\leq F_{\lambda}(t)\leq\max\{1,\mu_{2}\}E_{\lambda}(t),
\label{equiv:kov-hyp}
\end{equation}
and
\begin{equation}
\min\{1,\mu_{1}\}E_{\lambda}(t)\leq F_{\ep,\lambda}(t)\leq\max\{1,\mu_{2}\}E_{\lambda}(t)
\label{equiv:kov-hyp-mod}
\end{equation}
for every admissible value of the parameters.  What we need in (\ref{th:main-above}) is an estimate of the Kovaleskyan energy (\ref{defn:E-kov}). To this end, for every $s\in(0,T_{0})$ we estimate the approximated hyperbolic energy in $[0,s]$, and the standard hyperbolic energy in $[s,T_{0}]$. 

The time-derivative of (\ref{defn:E-hyp-mod}) is
\begin{equation}
F_{\ep,\lambda}'(t)=c_{\ep}'(t)\lambda^{2}\ul(t)^{2}+\lambda^{2}(c_{\ep}(t)-c(t))\cdot 2\ul(t)\ul'(t),
\nonumber
\end{equation}
from which we deduce that
\begin{equation}
F_{\ep,\lambda}'(t)\leq\frac{|c_{\ep}'(t)|}{c_{\ep}(t)}F_{\ep,\lambda}(t)+
\lambda\frac{|c_{\ep}(t)-c(t)|}{c_{\ep}(t)^{1/2}}F_{\ep,\lambda}(t).
\nonumber
\end{equation}

Integrating this differential inequality, and keeping (\ref{est:cep}) into account, we deduce that
\begin{equation}
F_{\ep,\lambda}(t)\leq F_{\ep,\lambda}(0)
\exp\left\{\left(\frac{\omega(\ep)}{\mu_{1}\ep}+\lambda\,\frac{\omega(\ep)}{\sqrt{\mu_{1}}}\right)t\right\}
\qquad
\forall t\in[0,T_{0}].
\nonumber
\end{equation}

Setting $\ep:=1/\lambda$, and recalling (\ref{equiv:kov-hyp-mod}), this implies that
\begin{equation}
E_{\lambda}(t)\leq \sqrt{M_{1}}\,E_{\lambda}(0)
\exp\left\{M_{2}\,\lambda\,\omega\left(\frac{1}{\lambda}\right)s\right\}
\qquad\forall t\in[0,s],
\label{est:E-kov}
\end{equation}
where $M_{1}$ and $M_{2}$ are defined by (\ref{defn:M12}). The time-derivative of (\ref{defn:E-hyp}) is
\begin{equation}
F_{\lambda}'(t)=\lambda^{2}c'(t)|\ul(t)|^{2}\leq
\frac{|c'(t)|}{c(t)}F_{\lambda}(t)\leq
\frac{\theta(t)}{\mu_{1}}F_{\lambda}(t)
\qquad
\forall t\in(0,T_{0}].
\nonumber
\end{equation}

Integrating this differential inequality we deduce that
\begin{equation}
F_{\lambda}(t)\leq F_{\lambda}(s)\exp\left(\frac{1}{\mu_{1}}\int_{s}^{t}\theta(\tau)\,d\tau\right)
\leq F_{\lambda}(s)\exp\left(M_{2}\int_{s}^{T_{0}}\theta(\tau)\,d\tau\right)
\nonumber
\end{equation}
for every $t\in[s,T_{0}]$. Recalling the equivalence (\ref{equiv:kov-hyp}), and estimate (\ref{est:E-kov}) with $t=s$, we conclude that
\begin{eqnarray}
E_{\lambda}(t) & \leq &
\sqrt{M_{1}}\,E_{\lambda}(s)\exp\left(M_{2}\int_{s}^{T_{0}}\theta(\tau)\,d\tau\right)
\nonumber
\\[1ex]
& \leq & M_{1}\, E_{\lambda}(0)
\exp\left\{M_{2}\left(\lambda\,\omega\left(\frac{1}{\lambda}\right)s+\int_{s}^{T_{0}}\theta(\tau)\,d\tau\right)\right\}
\nonumber
\end{eqnarray}
for every $t\in[s,T_{0}]$. On the other hand, the same estimate holds true also for $t\in[0,s]$ because of (\ref{est:E-kov}). Optimizing with respect to $s$ we obtain exactly (\ref{th:main-above}).
\hfill$\Box$


\setcounter{equation}{0}
\section{Counterexamples}\label{sec:counterexamples}

In this section we prove statement~(2) of Theorem~\ref{thm:main}, and we show how that statement leads to the counterexamples required for the optimality part in Theorem~\ref{thm:der-loss}.

\subsection{Asymptotic and universal activators}\label{sec:activators}

Let us begin by summarizing the theory developed in~\cite[section~4.1]{gg:CDSR-optimal}. In the sequel we consider solutions to the family of ordinary differential equations 
\begin{equation}
u_{\lambda}''(t)+\lambda^{2}\cl(t)u_{\lambda}(t)=0,
\label{eqn:uc-lambda}
\end{equation}
with initial data
\begin{equation}
u_{\lambda}(0)=0,
\qquad
u_{\lambda}'(0)=1.
\label{data:ode-lambda}
\end{equation}

We point out that in (\ref{eqn:uc-lambda}) the propagation speed depends on the parameter $\lambda$. When the propagation speed is fixed, we consider equation
\begin{equation}
v_{\lambda}''(t)+\lambda^{2}c(t)v_{\lambda}(t)=0,
\label{eqn:ode-lambda-n}
\end{equation}
with initial data
\begin{equation}
v_{\lambda}(0)=0,
\qquad
v_{\lambda}'(0)=1.
\label{data:ode-lambda-n}
\end{equation}

Let us recall our notion of activators (compare with~\cite{gg:DGCS-critical,gg:CDSR-optimal}).

\begin{defn}[Universal activators of a sequence]\label{defn:un-act}
\begin{em}

Let $T_{0}$ be a positive real number, let $\phi:(0,+\infty)\to(0,+\infty)$ be a function, and let $\{\lambda_{n}\}$ be a sequence of positive real numbers such that $\lambda_{n}\to +\infty$.

A \emph{universal activator} of the sequence $\{\lambda_{n}\}$ with rate $\phi$ is a coefficient $c\in L^{1}((0,T_{0}))$ such that the corresponding sequence $\{v_{\lambda_{n}}(t)\}$ of solutions to (\ref{eqn:ode-lambda-n})--(\ref{data:ode-lambda-n}) satisfies
\begin{equation}
\limsup_{n\to +\infty}\left(|v_{\lambda_{n}}'(t)|^{2}+\lambda_{n}^{2}|v_{\lambda_{n}}(t)|^{2}\right)
\exp\left(-\phi(\lambda_{n})\strut\right)\geq 1
\qquad
\forall t\in(0,T_{0}].
\label{th:defn-un-act}
\end{equation}

\end{em}
\end{defn}

\begin{defn}[Asymptotic activators]\label{defn:as-act}
\begin{em}

Let $T_{0}$ be a positive real number, and let $\phi:(0,+\infty)\to(0,+\infty)$ be a function. 

A \emph{family of asymptotic activators} with rate $\phi$ is a family of coefficients $\{c_{\lambda}(t)\}\subseteq L^{1}((0,T_{0}))$ with the property that, for every $\delta\in(0,T_{0})$, there exist two positive constants $M_{\delta}$ and $\lambda_{\delta}$ such that the corresponding family $\{u_{\lambda}(t)\}$ of solutions to (\ref{eqn:uc-lambda})--(\ref{data:ode-lambda}) satisfies
\begin{equation}
|u_{\lambda}'(t)|^{2}+\lambda^{2}|u_{\lambda}(t)|^{2}\geq
M_{\delta}\exp\left(2\phi(\lambda)\right)
\qquad
\forall t\in[\delta,T_{0}],\quad\forall\lambda\geq\lambda_{\delta}.
\label{eqn:asympt-activ}
\end{equation}

\end{em}
\end{defn}

The coefficient~2 in the exponential of (\ref{eqn:asympt-activ}) could be replaced by any number greater than~1. The following result shows that families of asymptotic activators are the basic tool in the construction of universal activators. This is the point where Baire category theorem discloses its power. For a proof, we refer to~\cite[Proposition~4.5]{gg:CDSR-optimal}.

\begin{prop}[From asymptotic to universal activators]\label{prop:as2un}

Let $\phi:(0,+\infty)\to(0,+\infty)$ be a function such that $\phi(\lambda)\to +\infty$ as $\lambda\to +\infty$. Let $T_{0}$ be a positive real number, and let $\PS\subseteq C^{0}([0,T_{0}])$ be a closed subset (with respect to uniform convergence).

Let us assume that there exists a dense subset $\mathcal{D}\subseteq\PS$ such that for every $c\in\mathcal{D}$ there exists a family of asymptotic activators $\{c_{\lambda}\}\subseteq\PS$ with rate $\phi$ such that $c_{\lambda}\to c$.

Then, for every unbounded sequence $\{\lambda_{n}\}$ of positive real numbers, the set of elements in $\PS$ that are universal activators of the sequence $\{\lambda_{n}\}$ with rate $\phi$ is residual in $\PS$ (and in particular nonempty).

\end{prop}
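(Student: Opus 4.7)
The plan is to write the set $G$ of universal activators of $\{\lambda_n\}$ with rate $\phi$ as a countable intersection of open dense subsets of $\PS$ and then conclude by Baire's theorem, which applies since $\PS$ is a closed subset of the complete metric space $C^0([0,T_0])$. For positive integers $k,N$, I introduce
$$U_{k,N}:=\bigcup_{n\geq N}\left\{c\in\PS : \min_{t\in[1/k,T_0]}\bigl(|v_{\lambda_n}'(t)|^2+\lambda_n^2|v_{\lambda_n}(t)|^2\bigr)\exp(-\phi(\lambda_n))>1-\frac{1}{k}\right\},$$
where $v_{\lambda_n}$ is the solution of (\ref{eqn:ode-lambda-n})--(\ref{data:ode-lambda-n}) with coefficient $c$. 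The inclusion $\bigcap_{k,N}U_{k,N}\subseteq G$ is immediate: given $t\in(0,T_0]$ and $\eta>0$, pick $k$ with $1/k<\min\{t,\eta\}$; then for each $N$, membership in $U_{k,N}$ yields some $n\geq N$ realizing the lower bound at this specific $t$, producing the limsup condition (\ref{th:defn-un-act}). Openness of $U_{k,N}$ follows from continuous dependence of the ODE on its coefficient in the $C^0$ topology: for each fixed $n$, the map $c\mapsto v_{\lambda_n}(\cdot;c)$ is continuous into $C^1([0,T_0])$, so the energy depends continuously on $c$ uniformly in $t\in[1/k,T_0]$, and the set in the union is open.

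The substantial step is proving that each $U_{k,N}$ is dense. Let $W\subseteq\PS$ be open; by density of $\mathcal{D}$ choose $c\in\mathcal{D}\cap W$ and an associated family of asymptotic activators $\{c_\lambda\}\subseteq\PS$ with rate $\phi$ such that $c_\lambda\to c$ in $C^0$. Applying Definition~\ref{defn:as-act} with $\delta=1/k$ provides constants $M_{1/k}>0$ and $\lambda_{1/k}>0$ such that the corresponding solutions $u_\lambda$ of (\ref{eqn:uc-lambda})--(\ref{data:ode-lambda}) satisfy
$$\bigl(|u_\lambda'(t)|^2+\lambda^2|u_\lambda(t)|^2\bigr)\exp(-\phi(\lambda))\geq M_{1/k}\exp(\phi(\lambda)) \qquad\forall t\in[1/k,T_0],\ \lambda\geq\lambda_{1/k}.$$
Since $\phi(\lambda)\to+\infty$, the right-hand side exceeds $1-1/k$ for all sufficiently large $\lambda$. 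Now choose $n\geq N$ large enough that simultaneously $\lambda_n\geq\lambda_{1/k}$, $M_{1/k}\exp(\phi(\lambda_n))>1-1/k$, and $c_{\lambda_n}\in W$ (the last being possible because $c_\lambda\to c\in W$). Setting $c':=c_{\lambda_n}\in\PS$, uniqueness of solutions forces $v_{\lambda_n}(\cdot;c')=u_{\lambda_n}$, so $c'\in W\cap U_{k,N}$, establishing density.

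Baire's theorem then yields that $\bigcap_{k,N}U_{k,N}$ is residual in $\PS$, and hence so is its superset $G$. The most delicate point, and where I expect the instructive difficulty to lie, is the joint design of the sets $U_{k,N}$: one must localize the lower bound on the compact interval $[1/k,T_0]$ to obtain openness via uniform continuous dependence, simultaneously relax the threshold from the exact value $1$ down to $1-1/k$ to preserve openness under perturbations, and still verify that these two weakenings together recover the sharp condition $\limsup\geq 1$ once intersected over all $k$ and $N$. The engine that makes these relaxations compatible is precisely the gap between the exponential rate $e^{2\phi(\lambda)}$ in Definition~\ref{defn:as-act} and the rate $e^{\phi(\lambda)}$ in Definition~\ref{defn:un-act}, which supplies exactly the room needed to absorb the possibly small constant $M_{1/k}$ via the divergence of $\phi$.
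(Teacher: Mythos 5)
Your proof is correct and follows essentially the same route as the paper, which defers this statement to the Baire-category argument of \cite[Proposition~4.5]{gg:CDSR-optimal}: open sets indexed by a cutoff $1/k$ and a tail index $N$, density obtained from asymptotic activators approximating elements of $\mathcal{D}$ (with the gap between $\exp(2\phi(\lambda))$ and $\exp(\phi(\lambda))$ absorbing the constant $M_{1/k}$), openness from continuous dependence of $v_{\lambda_n}$ on $c$ in the uniform topology, and the intersection over $k,N$ recovering the limsup condition. Only cosmetic adjustments are needed: restrict to $k>1/T_{0}$ so that $[1/k,T_{0}]$ is nonempty and $\delta=1/k$ is an admissible choice in Definition~\ref{defn:as-act}, and, since $\{\lambda_{n}\}$ is only assumed unbounded, pick $n\geq N$ with $\lambda_{n}$ sufficiently large rather than ``$n$ large enough''.
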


Finally, the following statement clarifies the crucial connection between universal activators and derivative loss. In order to show the strategy, we start by proving the result in the special case where $A$ admits an unbounded sequence of eigenvalues (see~\cite[Proposition~4.3]{gg:CDSR-optimal}).

\begin{prop}[Universal activators vs derivative loss -- Model case]\label{prop:ua2idl}

Let $H$ be a Hilbert space, and let $A$ be a nonnegative self-adjoint operator on $H$. Let us assume that there exists a sequence $\{e_{n}\}$ of orthonormal vectors in $H$, and an unbounded sequence of positive real numbers $\{\lambda_{n}\}$ such that $Ae_{n}=\lambda_{n}^{2}e_{n}$ for every positive integer $n$.

Let $T_{0}$ be a positive real number, let $\phi:(0,+\infty)\to(0,+\infty)$ be a function, and let $c\in L^{1}((0,T_{0}))$ be a universal activator of the sequence $\{\lambda_{n}\}$ with rate $\phi$. Let us assume also that
\begin{equation}
\phi(\lambda_{n})\geq n
\qquad
\forall n\geq 1.
\label{hp:series-ln}
\end{equation}

Then the asymptotic behavior of $\{\phi(\lambda_{n})\}$ determines the derivative loss of solutions to (\ref{defn:AWE})--(\ref{eqn:data}) according to the following scheme
\begin{align}
\lim_{n\to +\infty}\phi(\lambda_{n})\to +\infty &\quad\leadsto \quad
\text{(at least) arbitrarily small derivative loss},
\label{hp-th:asdl}
\\[1ex]
\liminf_{n\to +\infty}\frac{\phi(\lambda_{n})}{\log\lambda_{n}}>0 &\quad\leadsto \quad
\text{(at least) finite derivative loss},
\label{hp-th:fdl}
\\[1ex]
\lim_{n\to +\infty}\frac{\phi(\lambda_{n})}{\log\lambda_{n}}=+\infty &\quad\leadsto\quad 
\text{infinite derivative loss}.
\label{hp-th:idl}
\end{align}

\end{prop}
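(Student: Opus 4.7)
\medskip

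\noindent\textbf{Proof proposal.}

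The plan is to use the spectral decomposition to reduce everything to scalar problems, and then to cook up initial data directly from the activator $c$. I would set $u_0:=0$ and look for initial data of the form $u_1:=\sum_{n\geq 1}a_n e_n$. Since $Ae_n=\lambda_n^2 e_n$ and $\{e_n\}$ is orthonormal, problem (\ref{defn:AWE})--(\ref{eqn:data}) decouples into the scalar family (\ref{eqn:ode-lambda-n})--(\ref{data:ode-lambda-n}), so the unique solution is $u(t)=\sum_n a_n v_{\lambda_n}(t)e_n$ and one has the identity
\[
\|u(t)\|_{D(A^{\sigma+1/2})}^{2}+\|u'(t)\|_{D(A^{\sigma})}^{2}
=\sum_{n\geq 1}a_n^{2}\,\lambda_n^{4\sigma}\bigl(|v_{\lambda_n}'(t)|^{2}+\lambda_n^{2}|v_{\lambda_n}(t)|^{2}\bigr)
\]
for every real $\sigma$; in particular $\|u_1\|_{D(A^{\beta})}^{2}=\sum_n a_n^{2}\lambda_n^{4\beta}$.

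Next, I would exploit (\ref{th:defn-un-act}): for each fixed $t\in(0,T_0]$ there is an infinite set $N_t\subseteq\mathbb{N}$ such that $|v_{\lambda_n}'(t)|^{2}+\lambda_n^{2}|v_{\lambda_n}(t)|^{2}\geq\tfrac{1}{2}\exp(\phi(\lambda_n))$ for every $n\in N_t$. Restricting the series above to $N_t$ yields the crucial lower bound
\[
\|u(t)\|_{D(A^{\sigma+1/2})}^{2}+\|u'(t)\|_{D(A^{\sigma})}^{2}\;\geq\;\tfrac{1}{2}\sum_{n\in N_t}a_n^{2}\,\lambda_n^{4\sigma}\exp(\phi(\lambda_n)),
\]
which equals $+\infty$ whenever the general term $a_n^{2}\lambda_n^{4\sigma}\exp(\phi(\lambda_n))$ tends to $+\infty$ as $n\to+\infty$, since $N_t$ is infinite and $t$ was arbitrary.

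The three conclusions then follow by three choices of $a_n$ realizing the right trade-off. In case (\ref{hp-th:asdl}) I take $a_n^{2}:=\lambda_n^{-4\beta}\exp(-\phi(\lambda_n)/2)$: hypothesis (\ref{hp:series-ln}) gives $\|u_1\|_{D(A^\beta)}^{2}=\sum\exp(-\phi(\lambda_n)/2)\leq\sum\exp(-n/2)<+\infty$, while $a_n^{2}\lambda_n^{4\beta}\exp(\phi(\lambda_n))=\exp(\phi(\lambda_n)/2)\to+\infty$, so $(u(t),u'(t))\notin D(A^{\beta+1/2})\times D(A^\beta)$ for every $t\in(0,T_0]$. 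In case (\ref{hp-th:fdl}) I pick any $\delta\in(0,c/8)$, where $c>0$ is such that $\phi(\lambda_n)\geq c\log\lambda_n$ for large $n$; with the same $a_n$, the general term at $\sigma=\beta-\delta$ is now $\lambda_n^{-4\delta}\exp(\phi(\lambda_n)/2)\geq\lambda_n^{c/2-4\delta}\to+\infty$, producing a finite loss of size at least $\delta$. In case (\ref{hp-th:idl}) I instead take $a_n^{2}:=\exp(-\phi(\lambda_n)/2)$: since $\phi(\lambda_n)/\log\lambda_n\to+\infty$ the exponent $-\phi(\lambda_n)/2+4\beta\log\lambda_n$ is eventually at most $-\phi(\lambda_n)/4\leq-n/4$ for every fixed $\beta$, so $u_1\in\bigcap_\beta D(A^\beta)$; and for every $\gamma>0$ the general term at $\sigma=-\gamma$ is $\exp(\phi(\lambda_n)/2-4\gamma\log\lambda_n)\to+\infty$, so the solution escapes every space $D(A^{-\gamma+1/2})\times D(A^{-\gamma})$.

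The main obstacle, and the reason for assumption (\ref{hp:series-ln}), lies in the following tension. The activator only produces a lower bound on a set $N_t$ depending on $t$ and otherwise uncontrolled; to force the partial sum over $N_t$ to diverge for \emph{every} possible infinite $N_t$, one must arrange that the general term of the series tends to $+\infty$, not merely be unbounded. This forces $a_n^{2}$ to be small enough to keep $\sum a_n^{2}\lambda_n^{4\beta}$ summable (so that $u_1\in D(A^\beta)$) yet large enough that $a_n^{2}\lambda_n^{4\sigma}\exp(\phi(\lambda_n))$ blows up with $n$ in the target regularity $\sigma$. Hypothesis (\ref{hp:series-ln}) is precisely what reconciles the two requirements, by turning the decaying factor $\exp(-\phi(\lambda_n)/2)$ into an $n$-exponential decay through the bound $\phi(\lambda_n)\geq n$, which keeps the initial-data series summable without killing the target-space blow-up.
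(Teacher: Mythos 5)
Your proposal is correct and follows essentially the same route as the paper: take $u_{0}=0$, build $u_{1}=\sum_{n}a_{n}e_{n}$ with $a_{n}$ decaying like $\exp(-\phi(\lambda_{n})/\mathrm{const})$, use (\ref{hp:series-ln}) to make the data series summable, and use the limsup in (\ref{th:defn-un-act}) to force divergence of the solution series in the target space for every $t\in(0,T_{0}]$. The only difference is cosmetic: the paper fixes $a_{n}=\exp(-\phi(\lambda_{n})/4)$ once and reads off the three cases from convergence thresholds in $\beta$ and $\gamma$, while you adjust $a_{n}$ (inserting the factor $\lambda_{n}^{-4\beta}$) case by case.
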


\begin{proof}

To begin with, we observe that (\ref{hp:series-ln}) implies that
\begin{equation}
\sum_{n=1}^{\infty}\exp\left(-\eta\phi(\lambda_{n})\right)<+\infty
\qquad
\forall\eta>0.
\label{hp:series-eta}
\end{equation}

For every positive integer $n$, let us set

\begin{equation}
a_{n}:=\exp\left(-\frac{\phi(\lambda_{n})}{4}\right),
\label{defn:an}
\end{equation}
and let us consider problem (\ref{defn:AWE})--(\ref{eqn:data}) with initial data
\begin{equation}
u_{0}:=0,
\qquad\qquad
u_{1}:=\sum_{n=1}^{\infty}a_{n}e_{n}.
\nonumber
\end{equation}

It is well-known that the unique solution is given by (a priori this series converges just in the sense of ultradistributions) 
\begin{equation}
u(t):=\sum_{n=1}^{\infty}a_{n}v_{\lambda_{n}}(t)e_{n}
\qquad
\forall t\in[0,T_{0}],
\nonumber
\end{equation}
where $\{v_{\lambda}(t)\}$ is the family of solutions to (\ref{eqn:ode-lambda-n})--(\ref{data:ode-lambda-n}). In particular, for every choice of the real numbers $\beta$ and $\gamma$ it turns out that
\begin{equation}
\sum_{n=1}^{\infty}\lambda_{n}^{4\beta}a_{n}^{2}=
\sum_{n=1}^{\infty}\exp\left(4\beta\log\lambda_{n}-\frac{\phi(\lambda_{n})}{2}\right),
\label{eqn:data-beta}
\end{equation}
and
\begin{multline}
\sum_{n=1}^{\infty}\lambda_{n}^{-4\gamma}a_{n}^{2}
\left(|v_{\lambda_{n}}'(t)|^{2}+\lambda_{n}^{2}|v_{\lambda_{n}}(t)|^{2}\right)
\\
=\sum_{n=1}^{\infty}\left(|v_{\lambda_{n}}'(t)|^{2}+\lambda_{n}^{2}|v_{\lambda_{n}}(t)|^{2}\right)
\exp(-\phi(\lambda_{n}))\cdot
\exp\left(\frac{\phi(\lambda_{n})}{2}-4\gamma\log\lambda_{n}\right).
\label{eqn:sol-gamma}
\end{multline}

Now we discuss the regularity of initial data by exploiting (\ref{eqn:data-beta}) and (\ref{hp:series-eta}), and the regularity of the corresponding solution $u(t)$ by exploiting (\ref{eqn:sol-gamma}) and condition (\ref{th:defn-un-act}) in the definition of universal activator. We distinguish three scenarios.

\begin{itemize}

\item  Under the assumption in (\ref{hp-th:asdl}) we observe that (\ref{eqn:data-beta}) converges when $\beta=0$, while (\ref{eqn:sol-gamma}) does not converge when $\gamma=0$. It follows that $(u_{0},u_{1})\in D(A^{1/2})\times H$, while $(u(t),u'(t))\not\in D(A^{1/2})\times H$ for every $t\in(0,T_{0}]$, which shows that in this case the solution $u(t)$ exhibits at least an arbitrarily small derivative loss.

\item  Under the assumption in (\ref{hp-th:fdl}), let $\delta\in(0,+\infty)$ denote the liminf of $\phi(\lambda_{n})/\log\lambda_{n}$. In this case we observe that (\ref{eqn:data-beta}) converges for every $\beta<\delta/8$, while (\ref{eqn:sol-gamma}) does not converge for every $\gamma<\delta/8$. As a consequence, the derivative loss of the solution $u(t)$ is at least $\delta/4$.

\item  Under the assumption in (\ref{hp-th:idl}) we observe that (\ref{eqn:data-beta}) converges for every $\beta\in\re$, and in particular $u_{1}\in D(A^{\infty})$, while (\ref{eqn:sol-gamma}) does not converge for every $\gamma\in\re$, which implies that the solution $u(t)$ has an infinite derivative loss.
\qedhere

\end{itemize}
\end{proof}


In the following result we extend the construction of counterexamples to general unbounded self-adjoint operators.

\begin{prop}[Universal activators vs derivative loss -- General case]\label{prop:ua2dl-gen}

Let $H$ be a separable Hilbert space, and let $A$ be a nonnegative self-adjoint operator on $H$. Let $T_{0}$ be a positive real number, and let $\phi:(0,+\infty)\to(0,+\infty)$ be a function.

Let us assume that the operator $A$ is unbounded, and that $\phi(\lambda)\to +\infty$ as $\lambda\to +\infty$.

Then there exists a sequence of positive real numbers $\lambda_{n}\to +\infty$ with the following property. If $c\in L^{1}((0,T_{0}))$ is a universal activator of the sequence $\{\lambda_{n}\}$ with rate $\phi$, then the asymptotic behavior of $\{\phi(\lambda_{n})\}$ determines the derivative loss of solutions to (\ref{defn:AWE})--(\ref{eqn:data}) according to the scheme (\ref{hp-th:asdl}) through (\ref{hp-th:idl}).

\end{prop}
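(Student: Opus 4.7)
The plan is to mimic Proposition~\ref{prop:ua2idl}, replacing the orthonormal sequence of genuine eigenvectors with an orthonormal sequence of \emph{approximate} eigenvectors built via the spectral theorem. Since $A$ is unbounded, $\sigma(A)\subset[0,+\infty)$ is unbounded, and I would first pick a sequence $\lambda_n\to+\infty$ such that $\lambda_n^2\in\sigma(A)$, $\phi(\lambda_n)\geq n$ (possible because $\phi(\lambda)\to+\infty$), and $\lambda_{n+1}^2\geq 4\lambda_n^2$ (leaving room for pairwise disjoint spectral windows later). This choice depends only on $A$ and $\phi$, as required by the statement. Using the spectral theorem, I then identify $H$ with $L^2(X,\nu)$ so that $A$ acts as multiplication by a measurable function $\alpha:X\to[0,+\infty)$ whose essential range is $\sigma(A)$; in this representation the solution of (\ref{defn:AWE})--(\ref{eqn:data}) with $u_0=0$ and $u_1=f$ reads $u(t,x)=v_{\sqrt{\alpha(x)}}(t)f(x)$.

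Now fix a universal activator $c$ of $\{\lambda_n\}$ with rate $\phi$, and set $E_\lambda(t):=|v_\lambda'(t)|^2+\lambda^2|v_\lambda(t)|^2$. The standard continuous dependence of ODE solutions on parameters shows that $\lambda\mapsto(v_\lambda,v_\lambda')$ is continuous from $[0,+\infty)$ into $C^1([0,T_0])$, and hence $\lambda\mapsto E_\lambda$ is continuous into $C([0,T_0])$. I would therefore pick $\epsilon_n>0$ (depending on $c$) so small that the windows $I_n:=[\lambda_n^2-\epsilon_n,\lambda_n^2+\epsilon_n]$ are pairwise disjoint and
\[
\sup_{t\in[0,T_0]}\bigl|E_{\sqrt{\alpha}}(t)-E_{\lambda_n}(t)\bigr|\leq\tfrac{1}{4}\exp(\phi(\lambda_n))
\qquad \forall\alpha\in I_n.
\]
Because $\lambda_n^2\in\sigma(A)$, the preimage $\alpha^{-1}(I_n)$ has positive $\nu$-measure; I take a measurable subset $X_n\subseteq\alpha^{-1}(I_n)$ of finite positive measure and let $e_n:=\nu(X_n)^{-1/2}\chi_{X_n}$, obtaining an orthonormal system. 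With $a_n:=\exp(-\phi(\lambda_n)/4)$ and initial data $u_0:=0$, $u_1:=\sum_n a_n e_n$ (convergent in $H$ since $\sum a_n^2\leq\sum e^{-n/2}$), the disjointness of the supports yields
\[
\|u'(t)\|^2+\|\sqrt{A}\,u(t)\|^2=\sum_n\frac{a_n^2}{\nu(X_n)}\int_{X_n}E_{\sqrt{\alpha(x)}}(t)\,d\nu(x)\geq\sum_n a_n^2\max\bigl\{0,\,E_{\lambda_n}(t)-\tfrac{1}{4}e^{\phi(\lambda_n)}\bigr\}.
\]
For every $t\in(0,T_0]$ the universal activator property supplies infinitely many $n$ with $E_{\lambda_n}(t)\geq\tfrac{1}{2}e^{\phi(\lambda_n)}$, and each such index then contributes at least $\tfrac{1}{4}e^{\phi(\lambda_n)/2}\to+\infty$, so the series diverges at every positive time. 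The three regimes (\ref{hp-th:asdl})--(\ref{hp-th:idl}) follow by inserting weights $\alpha^{2\beta}$ (for initial data in $D(A^{\beta+1/2})\times D(A^{\beta})$) and $\alpha^{-2\gamma}$ (for the target space) inside the integrals, estimating $\alpha\sim\lambda_n^2$ on $X_n$, and repeating verbatim the computations (\ref{eqn:data-beta})--(\ref{eqn:sol-gamma}) from the proof of Proposition~\ref{prop:ua2idl}.

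The main technical obstacle is the uniform-in-$t$ continuity estimate defining $\epsilon_n$: it is an \emph{absolute} bound, not a relative one, and is therefore informative only at those times $t$ where $E_{\lambda_n}(t)$ is comparably large---precisely the $t$'s selected by the universal activator condition. The width $\epsilon_n$ (and hence $X_n$, $e_n$, $u_1$) legitimately depends on the specific coefficient $c$, but this does not conflict with the statement, which only fixes the sequence $\{\lambda_n\}$ in advance.
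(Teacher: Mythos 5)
Your proposal is correct and follows essentially the same route as the paper's proof: spectral representation of $A$ as a multiplication operator, a sequence $\lambda_{n}$ with spectral mass in arbitrarily small windows, continuity of $\lambda\mapsto(v_{\lambda},v_{\lambda}')$ in $C^{1}([0,T_{0}])$ to control the energy uniformly on each window (your absolute error $\tfrac14 e^{\phi(\lambda_{n})}$ plays the role of the paper's error $1$, and both suffice), normalized characteristic functions of the windows as approximate eigenvectors, and then the same weights $a_{n}=\exp(-\phi(\lambda_{n})/4)$ and series computations as in Proposition~\ref{prop:ua2idl}. The remaining differences (windows in $\lambda^{2}$ rather than $\lambda$, requiring $\lambda_{n}^{2}\in\sigma(A)$ with a growth condition instead of the paper's positive-measure condition plus $\lambda_{n}/2\leq\lambda_{n}\pm r_{n}\leq 2\lambda_{n}$) are cosmetic.
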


\begin{proof}

We imitate the proof of Proposition~\ref{prop:ua2idl} by exploiting the general form of the spectral theorem and a reinforced version of universal activators.

\paragraph{\textmd{\textit{Definition of the sequence $\{\lambda_{n}\}$}}}

According to the spectral theorem for self-adjoint operators (see for example~\cite[Theorem~VIII.4]{reed-simon}) there exists a finite measure space $(M,\mu)$, an isometric bijective map $\ft:H\to L^{2}(M,\mu)$, and a measurable function $\lambda:M\to[0,+\infty)$ such that the operator $A$ on $H$ acts as the multiplication operator by $\lambda^{2}$ in $L^{2}(M,\mu)$. 

More precisely, to every vector $w\in H$ it is associated the ``generalized Fourier transform'' $\what(\xi):=\ft(v)\in L^{2}(M,\mu)$ in such a way that
\begin{itemize}

\item  $w\in D(A^{\beta})$ if and only if $(1+\lambda(\xi)^{2})^{\beta}\,\what(\xi)\in L^{2}(M,\mu)$,

\item  if $w\in D(A)$ then $[\ft(Aw)](\xi)=\lambda(\xi)^{2}\,\what(\xi)$ for $\mu$-almost every $\xi\in M$.

\end{itemize}

Since $A$ is unbounded, the function $\lambda(\xi)$ is essentially unbounded, and therefore there exists a sequence of positive real numbers $\lambda_{n}\to +\infty$ such that
\begin{equation}
\mu(\{\xi\in M:\lambda(\xi)\in[\lambda_{n}-r,\lambda_{n}+r]\})>0
\qquad
\forall n\geq 1
\quad
\forall r>0.
\nonumber
\end{equation}

Up to passing to a subsequence, we can also assume that the sequence $\{\lambda_{n}\}$ is strictly increasing and (\ref{hp:series-ln}) holds true.

\paragraph{\textmd{\textit{A ``stronger'' property of universal activators}}}

Let $\{v_{\lambda}(t)\}$ denote the family of solutions to (\ref{eqn:ode-lambda-n})--(\ref{data:ode-lambda-n}). We claim that there exists a sequence $\{r_{n}\}$ of positive real numbers such that the intervals $[\lambda_{n}-r_{n},\lambda_{n}+r_{n}]$ are pairwise disjoint and, if we set
\begin{equation}
I_{n}(t):=\inf\left\{v_{\lambda}'(t)^{2}+\lambda^{2}v_{\lambda}(t)^{2}\strut:
\lambda\in[\lambda_{n}-r_{n},\lambda_{n}+r_{n}]\right\},
\label{defn:In(t)}
\end{equation}
then it turns out that
\begin{equation}
\limsup_{n\to +\infty}I_{n}(t)\exp\left(-\phi(\lambda_{n})\right)
\geq 1
\qquad
\forall t\in(0,T_{0}].
\label{defn:ua-strong}
\end{equation}

To this end, it is enough to observe that the map
\begin{equation}
(0,+\infty)\ni\lambda\mapsto v_{\lambda}(t)\in C^{1}([0,T_{0}])
\nonumber
\end{equation}
is continuous, and then choose $r_{n}$ in such a way that  
\begin{equation}
v_{\lambda}'(t)^{2}+\lambda^{2}v_{\lambda}(t)^{2}\geq
v_{\lambda_{n}}'(t)^{2}+\lambda_{n}^{2}v_{\lambda_{n}}(t)^{2}-1
\nonumber
\end{equation}
for every $t\in[0,T_{0}]$ and every $\lambda\in[\lambda_{n}-r_{n},\lambda_{n}+r_{n}]$. At this point, (\ref{defn:ua-strong}) follows from (\ref{th:defn-un-act}) because $\phi(\lambda_{n})\to +\infty$. Up to reducing $r_{n}$ if necessary, we can also assume that
\begin{equation}
\frac{\lambda_{n}}{2}\leq\lambda_{n}-r_{n}<\lambda_{n}+r_{n}\leq 2\lambda_{n}
\qquad
\forall n\geq 1.
\label{est:rn-ln}
\end{equation}

\paragraph{\textmd{\textit{Construction of counterexamples}}}

Let $\{\lambda_{n}\}$ be any sequence as in the first paragraph, and let $c\in L^{1}((0,T_{0}))$ be any universal activator of the sequence $\{\lambda_{n}\}$ with rate $\phi$. We need to show that problem (\ref{defn:AWE})--(\ref{eqn:data}) exhibits the prescribed derivative loss. To this end, for every positive integer $n$ we define $r_{n}$ as in the previous paragraph, we consider the set
\begin{equation}
M_{n}:=\left\{\xi\in M:\lambda(\xi)\in[\lambda_{n}-r_{n},\lambda_{n}+r_{n}]\right\},
\nonumber
\end{equation}
which has positive measure, and we call $\what_{n}(\xi)$ the characteristic function of $M_{n}$ (namely $\what_{n}(\xi)=1$ if $\xi\in M_{n}$, and $\what_{n}(\xi)=0$ otherwise). Then we define $a_{n}$ as in (\ref{defn:an}), we set
\begin{equation}
\widehat{u}_{1}(\xi):=\sum_{n=1}^{\infty}\frac{a_{n}}{\mu(M_{n})^{1/2}}\,\what_{n}(\xi),
\nonumber
\end{equation}
and we consider problem (\ref{defn:AWE})--(\ref{eqn:data}) with initial data $u_{0}=0$ and $u_{1}:=\ft^{-1}(\widehat{u}_{1}(\xi))$. It is well-known that the ``Fourier transform'' of the solution is
\begin{equation}
\widehat{u}(t,\xi):=[\ft(u(t))](\xi)=\widehat{u}_{1}(\xi)\cdot v_{\lambda(\xi)}(t),
\nonumber
\end{equation}
where $\{v_{\lambda}(t)\}$ is again the family of solutions to (\ref{eqn:ode-lambda-n})--(\ref{data:ode-lambda-n}). 

Let us examine the regularity of $u_{1}$ and of the pair $(u(t),u'(t))$. As for the regularity of $u_{1}$, for every real number $\beta\geq 0$ it turns out that
\begin{equation}
u_{1}\in D(A^{\beta})
\quad\Longleftrightarrow\quad
\int_{M}\lambda(\xi)^{4\beta}\,\widehat{u}_{1}(\xi)^{2}\,d\mu(\xi)<+\infty.
\nonumber
\end{equation}

On the other hand, since the sets $M_{n}$ are pairwise disjoint, we obtain that
\begin{multline}
\int_{M}\lambda(\xi)^{4\beta}\,\widehat{u}_{1}(\xi)^{2}\,d\mu(\xi)=
\sum_{n=1}^{\infty}\int_{M_{n}}\lambda(\xi)^{4\beta}\,\widehat{u}_{1}(\xi)^{2}\,d\mu(\xi)
\\
=\sum_{n=1}^{\infty}\frac{a_{n}^{2}}{\mu(M_{n})}\int_{M_{n}}\lambda(\xi)^{4\beta}\,d\mu(\xi)
\leq\sum_{n=1}^{\infty}a_{n}^{2}(\lambda_{n}+r_{n})^{4\beta}
\leq 2^{4\beta}\sum_{n=1}^{\infty}a_{n}^{2}\lambda_{n}^{4\beta},
\nonumber
\end{multline}
where in the last step we exploited the estimate from above in (\ref{est:rn-ln}), and analogously
\begin{equation}
\int_{M}\lambda(\xi)^{4\beta}\,\widehat{u}_{1}(\xi)^{2}\,d\mu(\xi)\geq
\frac{1}{2^{4\beta}}\sum_{n=1}^{\infty}a_{n}^{2}\lambda_{n}^{4\beta},
\nonumber
\end{equation}
so that in conclusion
\begin{equation}
u_{1}\in D(A^{\beta})
\quad\Longleftrightarrow\quad
\sum_{n=1}^{\infty}a_{n}^{2}\lambda_{n}^{4\beta}<+\infty.
\nonumber
\end{equation}

As for the regularity of $u(t)$, we observe that for every real number $\gamma\geq 0$ it turns out that $(u(t),u'(t))\in D(A^{-\gamma+1/2})\times D(A^{-\gamma})$ if and only if
\begin{equation}
\int_{M}\lambda(\xi)^{-4\gamma}\,\widehat{u}_{1}(\xi)^{2}
\left(v_{\lambda(\xi)}'(t)^{2}+\lambda(\xi)^{2}\,v_{\lambda(\xi)}(t)^{2}\right)
\,d\mu(\xi)<+\infty.
\nonumber
\end{equation}

Recalling (\ref{defn:In(t)}) the last integral can be rewritten and estimated as follows
\begin{eqnarray*}
\lefteqn{\hspace{-5em}
\sum_{n=1}^{\infty}\frac{a_{n}^{2}}{\mu(M_{n})}\int_{M_{n}}
\lambda(\xi)^{-4\gamma}\left(v_{\lambda(\xi)}'(t)^{2}+\lambda(\xi)^{2}\,v_{\lambda(\xi)}(t)^{2}\right)
\,d\mu(\xi)}
\\
\qquad\qquad& \geq & 
\sum_{n=1}^{\infty}(\lambda_{n}+r_{n})^{-4\gamma}a_{n}^{2}I_{n}(t)
\\
& \geq & \frac{1}{2^{4\gamma}}\sum_{n=1}^{\infty}\lambda_{n}^{-4\gamma}a_{n}^{2}I_{n}(t)
\\
& = & 
\frac{1}{2^{4\gamma}}\sum_{n=1}^{\infty}I_{n}(t)
\exp(-\phi(\lambda_{n}))\cdot
\exp\left(\frac{\phi(\lambda_{n})}{2}-4\gamma\log\lambda_{n}\right),
\end{eqnarray*}
where in the last inequality we exploited the estimate from above in (\ref{est:rn-ln}).

Thanks to (\ref{defn:ua-strong}), at this point all conclusions follow as in Proposition~\ref{prop:ua2idl}.
\end{proof}


\subsection{Building block and dense subset}

From the general theory, we know that we need to show that asymptotic activators can approximate all coefficients in a dense subset. In this subsection we identify this dense subset, and then we describe the starting point of the construction of the approximating family of asymptotic activators.

\begin{defn}\label{defn:D}
\begin{em}

Let $T_{0}$, $\mu_{1}$, $\mu_{2}$, $\omega$, $\theta$ be as in Definition~\ref{defn:PS}. We call $\mathcal{D}$ the set of all functions $c_{*}\in\PS(T_{0},\mu_{1},\mu_{2},\omega,\theta)$ for which there exist real numbers $T_{1}$, $\gamma$, and $\eta$ (that might depend on $c_{*}$) with
\begin{equation}
T_{1}\in(0,T_{0}),
\qquad
\gamma^{2}\in(\mu_{1},\mu_{2}),
\qquad
\eta\in(0,1)
\nonumber
\end{equation}
such that
\begin{equation}
c_{*}(t)=\gamma^{2}
\qquad
\forall t\in[0,T_{1}]
\nonumber
\end{equation}
and
\begin{equation}
|c_{*}(t)-c_{*}(s)|\leq(1-\eta)\omega(|t-s|)
\qquad
\forall(t,s)\in[0,T_{0}]^{2}.
\label{defn:c*-omega}
\end{equation}

When we want to emphasize the parameters we write $c\in\mathcal{D}(T_{0},\mu_{1},\mu_{2},\omega,\theta;T_{1},\gamma,\eta)$.

\end{em}
\end{defn}

In word, the elements of $\mathcal{D}$ are constant in a right neighborhood of the origin, and they do not saturate neither the strict hyperbolicity condition in this neighborhood, nor the inequality in the definition of $\omega$-continuity. As one can easily guess, the result is that these special coefficients are dense in the classes introduced in Definition~\ref{defn:PS}.

\begin{prop}[Density]
 
The set $\mathcal{D}$ is dense in $\PS(T_{0},\mu_{1},\mu_{2},\omega,\theta)$ for every admissible choice of the parameters.

\end{prop}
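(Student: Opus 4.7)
The plan is, given $c\in\PS(T_{0},\mu_{1},\mu_{2},\omega,\theta)$ and a tolerance $\delta>0$, to construct $c_{*}\in\mathcal{D}$ with $\|c_{*}-c\|_{L^{\infty}}<\delta$ by performing two small, independent perturbations: first a convex combination of $c$ with a constant strictly inside $(\mu_{1},\mu_{2})$, which opens a uniform strict margin in every constraint; and then a flattening of the perturbed coefficient on a small interval $[0,T_{1}]$, which produces the required constant behavior near the origin.

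For the first perturbation, set $c_{0}:=(\mu_{1}+\mu_{2})/2$ and $c_{\ep}:=(1-\ep)c+\ep c_{0}$ for small $\ep\in(0,1)$. This single move forces simultaneously the range of $c_{\ep}$ strictly inside $(\mu_{1},\mu_{2})$, the modulus of continuity to be the strictly smaller function $(1-\ep)\omega$, and the pointwise bound $|c_{\ep}'(t)|\leq (1-\ep)\theta(t)\leq\theta(t)$. The $L^{\infty}$-cost is $\|c-c_{\ep}\|_{L^{\infty}}\leq \ep(\mu_{2}-\mu_{1})/2$, hence arbitrarily small.

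For the second perturbation, pick $T_{1}\in(0,T_{0})$ and define $c_{*}(t):=c_{\ep}(\max\{t,T_{1}\})$, so that $c_{*}\equiv\gamma^{2}:=c_{\ep}(T_{1})$ on $[0,T_{1}]$ and $c_{*}\equiv c_{\ep}$ on $[T_{1},T_{0}]$. The strict bounds from the first step guarantee $\gamma^{2}\in(\mu_{1},\mu_{2})$. The modulus-of-continuity inequality (\ref{defn:c*-omega}) with $\eta=\ep$ holds because the only nontrivial case $s\leq T_{1}\leq t$ reduces to $|c_{\ep}(t)-c_{\ep}(T_{1})|\leq (1-\ep)\omega(t-T_{1})\leq (1-\ep)\omega(t-s)$ by monotonicity of $\omega$, while $c_{*}'$ vanishes on $(0,T_{1})$ and coincides with $c_{\ep}'$ on $(T_{1},T_{0})$, so the derivative bound by $\theta$ is automatic. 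The extra $L^{\infty}$-cost of the flattening is at most $\sup_{t\in[0,T_{1}]}|c_{\ep}(T_{1})-c_{\ep}(t)|\leq\omega(T_{1})$, which tends to $0$ as $T_{1}\to 0^{+}$. Choosing first $T_{1}$ small enough that $\omega(T_{1})<\delta/2$ and then $\ep$ small enough, one obtains $\|c_{*}-c\|_{L^{\infty}}<\delta$.

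The only point demanding some care is the smoothness condition built into the class $\PS$: the sharp gluing $\max\{t,T_{1}\}$ creates a corner in $c_{*}'$ at $t=T_{1}$, which is harmless if $W^{\infty}_{loc}((0,T_{0}))$ is read in the Lipschitz sense $W^{1,\infty}_{loc}$, but conflicts with any stronger reading. In that case the remedy is to interpolate smoothly on a slightly wider transition window $[T_{1},T_{1}']\subset(0,T_{0})$ by taking $c_{*}(t):=\chi(t)\gamma^{2}+(1-\chi(t))c_{\ep}(t)$ with $\chi\in C^{\infty}$ monotone, $\chi\equiv 1$ on $[0,T_{1}]$ and $\chi\equiv 0$ on $[T_{1}',T_{0}]$; since this is a pointwise convex combination of two $(1-\ep)\omega$-continuous functions whose derivatives are both bounded by $\theta$, both constraints are preserved, and the extra $L^{\infty}$-error is still $O(\omega(T_{1}'))$. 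This junction-bookkeeping is the main, and essentially only, obstacle; no further analytic difficulty arises.
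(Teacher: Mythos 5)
Your construction is correct and is essentially the paper's own proof: the paper performs your two perturbations in a single formula, defining $c_{\ep}(t):=(1-\ep)\,c(\max\{t,\ep\})+\ep\,(\mu_{1}+\mu_{2})/2$, which is exactly your convex combination followed by flattening, with $T_{1}=\ep$, $\gamma^{2}=c_{\ep}(\ep)$ and $\eta=\ep$ (you merely decouple the two parameters, which changes nothing). Your closing worry about smoothness is moot --- the class $\PS$ is meant in a sense that tolerates Lipschitz corners, as shown by the paper's own approximant (which has a corner at $t=\ep$) and by the $\theta$-construction, which produces merely Lipschitz coefficients --- and in any case your proposed smooth interpolation would need more care than stated, since $c_{*}'$ contains the extra term $\chi'(t)\bigl(\gamma^{2}-c_{\ep}(t)\bigr)$, so the bounds by $\theta$ and by $\omega$ are not inherited by mere convexity.
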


\begin{proof}

Let $c(t)$ be any element of $\PS(T_{0},\mu_{1},\mu_{2},\omega,\theta)$. For every $\ep\in(0,1)$, with $\ep<T_{0}$, let us set
\begin{equation}
c_{\ep}(t):=\begin{cases}
(1-\ep)c(\ep)+\ep\,\dfrac{\mu_{2}+\mu_{1}}{2} & \text{if }t\in[0,\ep], 
\\[2ex]
(1-\ep)c(t)+\ep\,\dfrac{\mu_{2}+\mu_{1}}{2} & \text{if }t\in[\ep,T_{0}].
\end{cases}
\nonumber
\end{equation} 

Then it turns out that $c_{\ep}\in\mathcal{D}(T_{0},\mu_{1},\mu_{2},\omega,\theta;T_{1},\gamma,\eta)$ with \begin{equation}
T_{1}:=\ep,
\qquad\quad
\gamma:=\sqrt{c_{\ep}(0)}=\sqrt{c_{\ep}(\ep)},
\qquad\quad
\eta:=\ep,
\nonumber
\end{equation}
and that $c_{\ep}(t)\to c(t)$ uniformly in $[0,T_{0}]$.
\end{proof}


The following lemma is essentially taken form~\cite{dgcs}. We state and prove it because we need the exact values of the constants.

\begin{lemma}[Basic block]\label{lemma:altalena}
	
Let $\ep$, $\gamma$, $\lambda$ be positive real numbers such that
\begin{equation}
\ep\leq 8\gamma^{3}\lambda.
\label{hp:ep-gl}
\end{equation}

Let $(a,b)\subseteq\re$ be an interval whose endpoints satisfy
\begin{equation}
\frac{a\gamma\lambda}{2\pi}\in\n
\qquad\text{and}\qquad
\frac{b\gamma\lambda}{2\pi}\in\n.
\label{hp:ab-int}
\end{equation}

For every $t\in[a,b]$ let us set
\begin{equation}
\varphi_{\ep,\gamma,\lambda}(t)
:=\frac{\ep}{4\gamma\lambda}\sin(2\gamma\lambda t)+
\frac{\ep^{2}}{64\gamma^{4}\lambda^{2}}\sin^{4}(\gamma\lambda t)
\label{defn:varphi}
\end{equation}
and
\begin{equation}
w_{\ep,\gamma,\lambda}(t):=\frac{1}{\gamma\lambda}\sin(\gamma\lambda t)
\exp\left(\frac{\ep}{16\gamma^{2}}(t-a)-\frac{\ep}{32\gamma^{3}\lambda}\sin(2\gamma\lambda t)\right).
\label{defn:w}
\end{equation}

Then the following statements hold true.
\begin{enumerate}
\renewcommand{\labelenumi}{(\arabic{enumi})}

\item  For every $t\in[a,b]$ it turns out that
\begin{equation}
|\varphi_{\ep,\gamma,\lambda}(t)|\leq\frac{\ep}{2\gamma\lambda}
\qquad\quad\text{and}\quad\qquad
\left|\varphi_{\ep,\gamma,\lambda}'(t)\right|\leq\ep.
\label{th:est-varphi}
\end{equation}

\item  For every modulus of continuity $\omega$ it turns out that
\begin{equation}
\left|\varphi_{\ep,\gamma,\lambda}(t)-\varphi_{\ep,\gamma,\lambda}(s)\right|\leq
\ep\cdot\max\left\{1,\frac{\pi}{\gamma}\right\}\cdot
\left[\lambda\,\omega\left(\frac{1}{\lambda}\right)\right]^{-1}\cdot
\omega(|t-s|)
\label{th:varphi-omega}
\end{equation}
for every $s$ and $t$ in $[a,b]$.

\item The function $w_{\ep,\gamma,\lambda}(t)$ satisfies the differential equation 
\begin{equation}
w_{\ep,\gamma,\lambda}''(t)+
\lambda^{2}\left(\gamma^{2}-\varphi_{\ep,\gamma,\lambda}(t)\right)\cdot w_{\ep,\gamma,\lambda}(t)=0
\nonumber
\end{equation}
for every $t\in[a,b]$, with ``initial'' data $w_{\ep,\gamma,\lambda}(a)=0$, $w_{\ep,\gamma,\lambda}'(a)=1$, and ``final'' data 
\begin{equation}
w_{\ep,\gamma,\lambda}(b)=0,
\qquad
w_{\ep,\gamma,\lambda}'(b)=\exp\left(\frac{\ep(b-a)}{16\gamma^{2}}\right).
\nonumber
\end{equation}

\end{enumerate}

\end{lemma}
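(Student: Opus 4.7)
The plan is to handle the three statements in order: (1) and (3) are essentially direct computations, while (2) is the main analytic step where one must combine both defining properties of the modulus of continuity $\omega$.

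For statement (1), I start by differentiating (\ref{defn:varphi}) to obtain
\[
\varphi_{\ep,\gamma,\lambda}'(t)=\frac{\ep}{2}\cos(2\gamma\lambda t)+\frac{\ep^{2}}{16\gamma^{3}\lambda}\sin^{3}(\gamma\lambda t)\cos(\gamma\lambda t).
\]
Each bound in (\ref{th:est-varphi}) is then obtained by bounding all trigonometric factors by $1$ in absolute value and using the hypothesis (\ref{hp:ep-gl}) in the form $\ep^{2}/(\gamma^{3}\lambda)\leq 8\ep$ to absorb the quadratic terms into the linear ones: the first bound becomes $\ep/(4\gamma\lambda)+\ep/(8\gamma\lambda)\leq\ep/(2\gamma\lambda)$, and the second becomes $\ep/2+\ep/2=\ep$.

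For statement (2), I split into two regimes. When $|t-s|\leq 1/\lambda$, the mean value theorem together with the derivative bound from (\ref{th:est-varphi}) gives $|\varphi_{\ep,\gamma,\lambda}(t)-\varphi_{\ep,\gamma,\lambda}(s)|\leq\ep|t-s|$, while the monotonicity of $\sigma\mapsto\sigma/\omega(\sigma)$ implies $|t-s|\leq\omega(|t-s|)/(\lambda\omega(1/\lambda))$; combining these yields (\ref{th:varphi-omega}) with constant~$1$. When $|t-s|\geq 1/\lambda$, the sup bound from (\ref{th:est-varphi}) gives $|\varphi_{\ep,\gamma,\lambda}(t)-\varphi_{\ep,\gamma,\lambda}(s)|\leq\ep/(\gamma\lambda)$, while the monotonicity of $\omega$ gives $\omega(|t-s|)\geq\omega(1/\lambda)$; combining these yields (\ref{th:varphi-omega}) with constant $1/\gamma$. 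The two cases together give (\ref{th:varphi-omega}) with constant $\max\{1,1/\gamma\}$, which is in turn bounded by $\max\{1,\pi/\gamma\}$.

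For statement (3), I set $h(t):=\frac{\ep}{16\gamma^{2}}(t-a)-\frac{\ep}{32\gamma^{3}\lambda}\sin(2\gamma\lambda t)$ so that $w_{\ep,\gamma,\lambda}(t)=(\gamma\lambda)^{-1}\sin(\gamma\lambda t)e^{h(t)}$, and I compute
\[
h'(t)=\frac{\ep}{16\gamma^{2}}\bigl(1-\cos(2\gamma\lambda t)\bigr)=\frac{\ep}{8\gamma^{2}}\sin^{2}(\gamma\lambda t),\qquad h''(t)=\frac{\ep\lambda}{8\gamma}\sin(2\gamma\lambda t).
\]
Differentiating $w_{\ep,\gamma,\lambda}$ twice and substituting into $w''+\lambda^{2}(\gamma^{2}-\varphi_{\ep,\gamma,\lambda})w$, the term $-\gamma^{2}\lambda^{2}\sin(\gamma\lambda t)e^{h}$ arising from the outer derivative of $\sin$ cancels $\gamma^{2}\lambda^{2}w$, and after dividing by $e^{h(t)}$ the remaining identity reads
\[
2h'(t)\cos(\gamma\lambda t)+\frac{\sin(\gamma\lambda t)}{\gamma\lambda}\bigl(h'(t)^{2}+h''(t)\bigr)=\frac{\lambda\,\varphi_{\ep,\gamma,\lambda}(t)\sin(\gamma\lambda t)}{\gamma}.
\]
Rewriting the first summand via $\sin^{2}(x)\cos(x)=\tfrac{1}{2}\sin(x)\sin(2x)$, dividing through by $\sin(\gamma\lambda t)$ at points where it does not vanish, and matching the coefficients of $\sin(2\gamma\lambda t)$ and $\sin^{4}(\gamma\lambda t)$ verifies the identity term by term against the definition (\ref{defn:varphi}); the equation also holds at the isolated zeros of $\sin(\gamma\lambda t)$ since $h'$ vanishes there as well. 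The boundary conditions are immediate from (\ref{hp:ab-int}): $\gamma\lambda a$ and $\gamma\lambda b$ are integer multiples of $2\pi$, so $\sin$ vanishes and $\cos$ equals $1$ at both endpoints, while $h(a)=0$ and $h(b)=\ep(b-a)/(16\gamma^{2})$.

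I expect the main obstacle to be statement (2): neither the Lipschitz bound nor the sup bound from (\ref{th:est-varphi}) alone is enough, and one genuinely has to interpolate between them using the two monotonicity properties of $\omega$ in complementary regimes. Statements (1) and (3) are essentially mechanical once the correct trigonometric identities are identified.
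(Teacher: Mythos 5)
Your proof is correct. Statements (1) and (3) are handled exactly as in the paper: the paper also deduces (\ref{th:est-varphi}) by bounding the trigonometric factors and absorbing the quadratic terms via (\ref{hp:ep-gl}), and it dismisses (3) as ``a lengthy computation'', which you have carried out correctly (your identity after dividing by $e^{h}$ is exactly what one gets, and the boundary data follow from (\ref{hp:ab-int}) as you say; the only cosmetic slip is calling the cancelling term $-\gamma^{2}\lambda^{2}\sin(\gamma\lambda t)e^{h}$ instead of $-\gamma\lambda\sin(\gamma\lambda t)e^{h}=-\gamma^{2}\lambda^{2}w$, which does not affect the argument). Where you genuinely diverge from the paper is statement (2). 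The paper exploits the $\pi/(\gamma\lambda)$-periodicity of $\varphi_{\ep,\gamma,\lambda}$ to replace $t,s$ by points $t_{1},s_{1}$ with $|t_{1}-s_{1}|\leq\min\{|t-s|,\pi/(\gamma\lambda)\}$, applies the Lipschitz bound there, and then compares the scales $\pi/(\gamma\lambda)$ and $1/\lambda$ by distinguishing $\pi/\gamma\gtrless 1$; this is where the factor $\pi/\gamma$ originates. You instead split on $|t-s|\lessgtr 1/\lambda$, using the Lipschitz bound together with the monotonicity of $\sigma/\omega(\sigma)$ in the short-range regime and the sup bound together with the monotonicity of $\omega$ in the long-range regime. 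Both arguments use the same ingredients (the two estimates of statement (1) and the two monotonicity properties of the modulus), but yours avoids the periodicity observation, is slightly shorter, and in fact yields the sharper constant $\max\{1,1/\gamma\}\leq\max\{1,\pi/\gamma\}$, so (\ref{th:varphi-omega}) follows a fortiori.
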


\begin{proof}

Let us start with statement~(1). From definition (\ref{defn:varphi}) it follows that
\begin{equation}
|\varphi_{\ep,\gamma,\lambda}(t)|\leq
\frac{\ep}{4\gamma\lambda}\left(1+\frac{\ep}{16\gamma^{3}\lambda}\right)
\qquad\text{and}\qquad
|\varphi_{\ep,\gamma,\lambda}'(t)|\leq
\frac{\ep}{2}\left(1+\frac{\ep}{8\gamma^{3}\lambda}\right),
\nonumber
\end{equation}
and these estimates imply (\ref{th:est-varphi}) because of assumption (\ref{hp:ep-gl}).

As for statement~(3), it is just a (lengthy) computation.

It remains to prove statement~(2). Let $t$ and $s$ be in $[a,b]$. Since the function $\varphi_{\ep,\gamma,\lambda}$ is periodic with period $\pi/(\gamma\lambda)$, there exists $t_{1}$ and $s_{1}$ in $[a,b]$ such that
\begin{equation}
\varphi_{\ep,\gamma,\lambda}(t)=\varphi_{\ep,\gamma,\lambda}(t_{1}),
\qquad\qquad
\varphi_{\ep,\gamma,\lambda}(s)=\varphi_{\ep,\gamma,\lambda}(s_{1}),
\nonumber
\end{equation}
and
\begin{equation}
|t_{1}-s_{1}|\leq\frac{\pi}{\gamma\lambda},
\qquad\qquad
|t_{1}-s_{1}|\leq|t-s|.
\label{est:t1s1}
\end{equation}

From the second estimate in (\ref{th:est-varphi}) we know that $\varphi_{\ep,\gamma,\lambda}$ is Lipschitz continuous with Lipschitz constant less than or equal to $\ep$, and in particular
\begin{eqnarray}
\left|\varphi_{\ep,\gamma,\lambda}(t)-\varphi_{\ep,\gamma,\lambda}(s)\right| & = &
\left|\varphi_{\ep,\gamma,\lambda}(t_{1})-\varphi_{\ep,\gamma,\lambda}(s_{1})\right|
\nonumber
\\[0.5ex]
& \leq & \ep\cdot|t_{1}-s_{1}|
\nonumber
\\[0.5ex]
& = & \ep\cdot\frac{|t_{1}-s_{1}|}{\omega(|t_{1}-s_{1}|)}\cdot\omega(|t_{1}-s_{1}|)
\nonumber
\\[0.5ex]
& \leq &
\ep\cdot\frac{\pi}{\gamma\lambda}\left[\omega\left(\frac{\pi}{\gamma\lambda}\right)\right]^{-1}
\cdot\omega(|t-s|),
\label{est:varphi-omega}
\end{eqnarray}
where in the last step we exploited that the functions $\omega(\sigma)$ and $\sigma/\omega(\sigma)$ are nondecreasing, and inequalities (\ref{est:t1s1}). Now we distinguish two cases.
\begin{itemize}

\item  If $\pi/\gamma\geq 1$, then $\omega(\pi/(\gamma\lambda))\geq\omega(1/\lambda)$ because of the monotonicity of $\omega(\sigma)$. It follows that
\begin{equation}
\frac{\pi}{\gamma\lambda}\left[\omega\left(\frac{\pi}{\gamma\lambda}\right)\right]^{-1}\leq
\frac{\pi}{\gamma}\cdot\frac{1}{\lambda}\left[\omega\left(\frac{1}{\lambda}\right)\right]^{-1},
\nonumber
\end{equation}
and therefore in this case (\ref{th:varphi-omega}) follows from (\ref{est:varphi-omega}).

\item  If $\pi/\gamma\leq 1$, then $\pi/(\gamma\lambda)\leq 1/\lambda$, and exploiting again the monotonicity of $\sigma/\omega(\sigma)$ we obtain that
\begin{equation}
\frac{\pi}{\gamma\lambda}\left[\omega\left(\frac{\pi}{\gamma\lambda}\right)\right]^{-1}\leq
\frac{1}{\lambda}\left[\omega\left(\frac{1}{\lambda}\right)\right]^{-1},
\nonumber
\end{equation}
and therefore also in this case (\ref{th:varphi-omega}) follows from (\ref{est:varphi-omega}).
\qedhere

\end{itemize}
\end{proof}


\subsection{Proof of Theorem~\ref{thm:main}, statement~(2)}

It remains to prove that, for every coefficient $c_{*}(t)$ in the dense subset $\mathcal{D}$ described in Definition~\ref{defn:D}, there exists a family of asymptotic activators that converge uniformly to $c_{*}(t)$. This result is proved in Proposition~\ref{prop:final}, and the proof relies on two preliminary general constructions, that we introduce in the following two propositions.

\begin{prop}[$\omega$-construction]\label{prop:omega}

Let us assume that $c_{*}\in\mathcal{D}(T_{0},\mu_{1},\mu_{2},\omega,\theta;T_{1},\gamma,\eta)$ for some admissible values of the parameters, and let us set
\begin{equation}
\nu_{1}:=\min\left\{1,\frac{\sqrt{\mu_{1}}}{\pi}\right\}.
\label{defn:delta}
\end{equation}

Let $\lambda$ be a positive real number such that
\begin{equation}
\nu_{1}\,\omega\left(\frac{1}{\lambda}\right)\leq 8\gamma^{3},
\qquad\qquad
\frac{\nu_{1}}{2\gamma}\,\omega\left(\frac{1}{\lambda}\right)\leq
\min\left\{\gamma^{2}-\mu_{1},\mu_{2}-\gamma^{2}\right\}.
\label{hp:lo-lambda}
\end{equation}

Let $(a,b)\subseteq(0,T_{1})$ be an interval whose endpoints satisfy (\ref{hp:ab-int}) and
\begin{equation}
\omega(b)\leq\eta\,\omega(T_{1}-b),
\qquad\qquad
\nu_{1}\lambda\,\omega\left(\frac{1}{\lambda}\right)\leq\theta(b).
\label{hp:lo-theta}
\end{equation}

Finally, let consider the function $\varphi_{\ep,\lambda,\gamma}(t)$ defined in (\ref{defn:w}) with 
\begin{equation}
\ep:=\nu_{1}\lambda\,\omega\left(\frac{1}{\lambda}\right),
\nonumber
\end{equation}
and let us define
\begin{equation}
c_{\lambda}(t):=\begin{cases}
c_{*}(t)  & \text{if }t\in[0,a]\cup[b,T_{0}], 
\\[0.5ex]
\gamma^{2}-\varphi_{\ep,\gamma,\lambda}(t)\quad  & 
\text{if }t\in[a,b].
\end{cases}
\nonumber
\end{equation}

Then the following statements hold true.
\begin{enumerate}
\renewcommand{\labelenumi}{(\arabic{enumi})}

\item  The function $c_{\lambda}$ belongs to $\PS(T_{0},\mu_{1},\mu_{2},\omega,\theta)$ and satisfies
\begin{equation}
|c_{\lambda}(t)-c_{*}(t)|\leq\frac{\nu_{1}}{2\gamma}\,\omega\left(\frac{1}{\lambda}\right)
\qquad
\forall t\in[0,T_{0}].
\label{th:lo-cl-c*}
\end{equation}

\item  The solution $\ul(t)$ to problem (\ref{eqn:uc-lambda})--(\ref{data:ode-lambda}) satisfies
\begin{multline}
\ul'(t)^{2}+\lambda^{2}\ul(t)^{2}
\\
\geq
\min\left\{1,\frac{1}{\mu_{2}}\right\}
\exp\left(-\frac{1}{\mu_{1}}\int_{T_{1}}^{T_{0}}\theta(t)\,dt\right)
\exp\left(\frac{\nu_{1}}{8\mu_{2}}\lambda\,\omega\left(\frac{1}{\lambda}\right)(b-a)\right)
\label{th:lo-ul}
\end{multline}
for every $t\in[b,T_{0}]$.

\end{enumerate}

\end{prop}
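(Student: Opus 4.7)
The plan is to verify the two statements in sequence. The construction essentially glues two regimes: on $[a,b]$ the coefficient equals the explicit oscillating expression $\gamma^2-\varphi_{\ep,\gamma,\lambda}$ from Lemma~\ref{lemma:altalena}, while outside $[a,b]$ it coincides with $c_*$. Note that the divisibility condition~(\ref{hp:ab-int}) forces $\varphi_{\ep,\gamma,\lambda}(a)=\varphi_{\ep,\gamma,\lambda}(b)=0$, so the glued coefficient is continuous at the two junctions; the first condition in~(\ref{hp:lo-lambda}) is exactly the compatibility hypothesis~(\ref{hp:ep-gl}) that allows us to invoke Lemma~\ref{lemma:altalena} with $\ep=\nu_1\lambda\,\omega(1/\lambda)$. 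The closeness estimate~(\ref{th:lo-cl-c*}) is immediate from the first bound in~(\ref{th:est-varphi}): on $[a,b]$ the difference $|c_\lambda-c_*|$ equals $|\varphi_{\ep,\gamma,\lambda}|\leq\nu_1\omega(1/\lambda)/(2\gamma)$, and outside it vanishes.

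To check $c_\lambda\in\PS(T_0,\mu_1,\mu_2,\omega,\theta)$ I would verify each defining property. Strict hyperbolicity holds outside $[a,b]$ because $c_\lambda=c_*$, and inside because the second inequality in~(\ref{hp:lo-lambda}) keeps $\gamma^2-\varphi_{\ep,\gamma,\lambda}$ in $[\mu_1,\mu_2]$. The pointwise derivative bound is trivial outside $[a,b]$, while on $[a,b]$ we have $|c_\lambda'|=|\varphi_{\ep,\gamma,\lambda}'|\leq\ep=\nu_1\lambda\,\omega(1/\lambda)\leq\theta(b)\leq\theta(t)$, using the second inequality in~(\ref{hp:lo-theta}) and the monotonicity of $\theta$.

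The modulus-of-continuity condition is the main obstacle and requires a careful case analysis for pairs $s,t\in[0,T_0]$. When both $s,t\in[a,b]$, Lemma~\ref{lemma:altalena}(2) yields $|c_\lambda(t)-c_\lambda(s)|\leq\nu_1\max\{1,\pi/\gamma\}\,\omega(|t-s|)$; the definition $\nu_1=\min\{1,\sqrt{\mu_1}/\pi\}$ together with $\gamma\geq\sqrt{\mu_1}$ forces $\nu_1\max\{1,\pi/\gamma\}\leq 1$, and this is precisely why $\nu_1$ is chosen that way. The intermediate cases $s\in[0,a]$, $t\in[a,b]$ and symmetrically $s\in[a,b]$, $t\in[b,T_1]$ reduce to the previous one because $\varphi_{\ep,\gamma,\lambda}(a)=\varphi_{\ep,\gamma,\lambda}(b)=0$, while pairs with both points in $[0,a]\cup[b,T_1]$ are trivial because $c_*\equiv\gamma^2$ there, and pairs with both points outside $[0,T_1]$ or across $[0,a]$ and $[T_1,T_0]$ use directly the sharper bound~(\ref{defn:c*-omega}) for $c_*$. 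The genuinely delicate case is $s\in[a,b]$ with $t\in[T_1,T_0]$: splitting at $b$ and using $c_\lambda(b)=\gamma^2$ gives $|c_\lambda(t)-c_\lambda(s)|\leq(1-\eta)\omega(t-b)+|\varphi_{\ep,\gamma,\lambda}(s)|\leq(1-\eta)\omega(t-b)+\omega(b-s)$, and since $b-s\leq b$ while $t-b\geq T_1-b$, the first inequality in~(\ref{hp:lo-theta}) $\omega(b)\leq\eta\,\omega(T_1-b)$ is exactly what allows us to bound $\omega(b-s)\leq\eta\,\omega(t-s)$, so that the total is at most $\omega(t-s)$.

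For statement~(2) I would follow $u_\lambda$ across the three intervals $[0,a]$, $[a,b]$, $[b,T_0]$. On $[0,a]$ the coefficient is constantly $\gamma^2$, so $u_\lambda(t)=\sin(\gamma\lambda t)/(\gamma\lambda)$, and the divisibility condition forces $u_\lambda(a)=0$, $u_\lambda'(a)=1$, matching the ``initial'' data of $w_{\ep,\gamma,\lambda}$. By uniqueness $u_\lambda\equiv w_{\ep,\gamma,\lambda}$ on $[a,b]$, so Lemma~\ref{lemma:altalena}(3) gives $u_\lambda(b)=0$ and $E_\lambda(b)=u_\lambda'(b)^2=\exp(\ep(b-a)/(8\gamma^2))$. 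On $[b,T_0]$, since $c_\lambda=c_*$ is constantly $\gamma^2$ on $[b,T_1]$, the hyperbolic energy $F_\lambda(t):=u_\lambda'(t)^2+\lambda^2 c_\lambda(t) u_\lambda(t)^2$ satisfies $F_\lambda'\geq-(\theta(t)/\mu_1)F_\lambda$ only on $[T_1,T_0]$, yielding $F_\lambda(t)\geq F_\lambda(b)\exp\bigl(-(1/\mu_1)\int_{T_1}^{T_0}\theta(\tau)\,d\tau\bigr)$. Converting via $E_\lambda\geq F_\lambda/\max\{1,\mu_2\}$, using $F_\lambda(b)=E_\lambda(b)$ since $u_\lambda(b)=0$, and using $\gamma^2\leq\mu_2$ to replace $1/\gamma^2$ with $1/\mu_2$ in the exponent, delivers exactly~(\ref{th:lo-ul}).
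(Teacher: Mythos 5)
Your proposal is correct and follows essentially the same route as the paper: Lemma~\ref{lemma:altalena} for the closeness, derivative, and oscillation estimates on $[a,b]$, the same case analysis for $\omega$-continuity (with the choice of $\nu_{1}$ absorbing the factor $\max\{1,\pi/\gamma\}$ and the condition $\omega(b)\leq\eta\,\omega(T_{1}-b)$ handling the case $s\in[a,b]$, $t\in[T_{1},T_{0}]$), and the explicit solution on $[0,a]$, the block $w_{\ep,\gamma,\lambda}$ on $[a,b]$, and the hyperbolic energy on $[b,T_{0}]$ for statement~(2). No gaps.
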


\begin{proof}

To begin with, we observe that the first inequality in (\ref{hp:lo-lambda}) is equivalent to $\ep\leq 8\gamma^{3}\lambda$, and therefore $\ep$, $\gamma$, $\lambda$ satisfy the assumptions of Lemma~\ref{lemma:altalena}.

\paragraph{\textmd{\textit{Statement~(1)}}}

Let us start by proving (\ref{th:lo-cl-c*}). To this end, we can assume that $t\in[a,b]$, because otherwise  $\cl(t)$ and $c_{*}(t)$ coincide. When $t\in[a,b]$, from the first estimate in (\ref{th:est-varphi}) we obtain that
\begin{equation}
|\cl(t)-c_{*}(t)|=
|\varphi_{\ep,\gamma,\lambda}(t)|\leq
\frac{\ep}{2\gamma\lambda}=
\frac{\nu_{1}}{2\gamma}\,\omega\left(\frac{1}{\lambda}\right),
\nonumber
\end{equation}
which proves (\ref{th:lo-cl-c*}).

Now let us prove that $c_{\lambda}\in\PS(T_{0},\mu_{1},\mu_{2},\omega,\theta)$. As for the strict hyperbolicity condition, we can limit ourselves to the interval $[a,b]$, where it follows from (\ref{th:lo-cl-c*}) because of the second condition in (\ref{hp:lo-lambda}).

As for the estimate on the derivative, again we can limit ourselves to the interval $[a,b]$. In this case from the second estimate in (\ref{th:est-varphi}) and assumption (\ref{hp:lo-theta}) we obtain that
\begin{equation}
|\cl'(t)|=
|\varphi_{\ep,\gamma,\lambda}'(t)|\leq
\ep=
\nu_{1}\lambda\,\omega\left(\frac{1}{\lambda}\right)\leq
\theta(b)\leq
\theta(t),
\nonumber
\end{equation}
where the last inequality follows from the monotonicity of $\theta(t)$.

Finally, let us check the $\omega$-continuity of $\cl(t)$. To this end, we consider $t$ and $s$ in $[0,T_{0}]$, we assume without loss of generality that $s<t$, and we distinguish some cases according to the position of $t$ and $s$.
\begin{itemize}

\item  If $a\leq s<t\leq b$, then we exploit (\ref{th:varphi-omega}), and from our definition (\ref{defn:delta}) of $\nu_{1}$ we obtain that
\begin{equation}
|\cl(t)-\cl(s)|=
|\varphi_{\ep,\gamma,\lambda}(t)-\varphi_{\ep,\gamma,\lambda}(s)|\leq
\nu_{1}\cdot\max\left\{1,\frac{\pi}{\gamma}\right\}\omega(t-s)\leq
\omega(t-s).
\label{th:omega-ab}
\end{equation}

\item  If $b\leq s<t\leq T_{0}$, then the $\omega$-continuity of $\cl$ follows from the $\omega$-continuity of $c_{*}$.

\item  If $s\in[a,b]$ and $t\in[T_{1},T_{0}]$, then
\begin{eqnarray*}
|\cl(t)-\cl(s)| & \leq &
|\cl(t)-\cl(b)|+|\cl(b)-\cl(s)|
\\[0.5ex]
& = & |c_{*}(t)-c_{*}(b)|+|\varphi_{\ep,\gamma,\lambda}(b)-\varphi_{\ep,\gamma,\lambda}(s)|
\\[0.5ex]
& \leq & (1-\eta)\omega(t-b)+\omega(b-s)
\\[0.5ex]
& \leq & (1-\eta)\omega(t-b)+\omega(b),
\end{eqnarray*} 
where we exploited that $c_{*}$ satisfies (\ref{defn:c*-omega}) in $[b,T_{0}]$, and the fact that $\cl$ satisfies (\ref{th:omega-ab}) in $[s,b]$. At this point we exploit the first inequality in (\ref{hp:lo-theta}) and we conclude that
\begin{equation}
|\cl(t)-\cl(s)|\leq 
\omega(t-b)+\eta[\omega(T_{1}-b)-\omega(t-b)]\leq
\omega(t-b)\leq
\omega(t-s).
\nonumber
\end{equation}

\item  The cases where at least one variable lies in $[0,a]\cup[b,T_{1}]$ are either trivial or can be easily reduced to the previous ones. 

\end{itemize} 

\paragraph{\textmd{\textit{Statement~(2)}}}

Let us examine now the solution to problem (\ref{eqn:uc-lambda})--(\ref{data:ode-lambda}). In the interval $[0,a]$ the solution is given by the explicit formula
\begin{equation}
\ul(t)=\frac{1}{\gamma\lambda}\sin(\gamma\lambda t),
\nonumber
\end{equation}
and hence, since $\gamma\lambda a$ is an integer multiple of $2\pi$, it follows that $\ul(a)=0$ and $\ul'(a)=1$.

In the interval $[a,b]$ the solution is given by the explicit formula $\ul(t)=w_{\ep,\gamma,\lambda}(t)$, where  $w_{\ep,\gamma,\lambda}$ is defined by (\ref{defn:w}). Since $\gamma\lambda b$ is an integer multiple of $2\pi$, from the explicit formula we obtain that
\begin{equation}
\ul'(b)^{2}+\lambda^{2}\gamma^{2}\ul(b)^{2}=
\exp\left(\frac{\nu_{1}}{8\gamma^{2}}\lambda\,\omega\left(\frac{1}{\lambda}\right)(b-a)\right)\geq
\exp\left(\frac{\nu_{1}}{8\mu_{2}}\lambda\,\omega\left(\frac{1}{\lambda}\right)(b-a)\right).
\label{eqn:Fl(b)}
\end{equation}

Finally, in the interval $[b,T_{0}]$ we consider the classical hyperbolic energy
\begin{equation}
F_{\lambda}(t):=\ul'(t)^{2}+\lambda^{2}\cl(t)\ul(t)^{2}.
\nonumber
\end{equation}

In the usual way we obtain that
\begin{equation}
\ul'(t)^{2}+\lambda^{2}\ul(t)^{2}\geq
\min\left\{1,\frac{1}{\mu_{2}}\right\}F_{\lambda}(t),
\label{Fl-equiv}
\end{equation}
and
\begin{equation}
F_{\lambda}'(t)=\lambda^{2}\cl'(t)\ul(t)^{2}\geq
-\frac{|\cl'(t)|}{\cl(t)}\,F_{\lambda}(t)=
-\frac{|c_{*}'(t)|}{c_{*}(t)}\,F_{\lambda}(t)\geq
-\frac{|c_{*}'(t)|}{\mu_{1}}\,F_{\lambda}(t).
\nonumber
\end{equation}

Since $c_{*}'(t)=0$ in $[b,T_{1}]$, and $|c_{*}'(t)|\leq\theta(t)$ in $[T_{1},T_{0}]$, integrating this differential inequality we obtain that
\begin{equation}
F_{\lambda}(t)\geq 
F_{\lambda}(b)\exp\left(-\frac{1}{\mu_{1}}\int_{T_{1}}^{T_{0}}\theta(\tau)\,d\tau\right)
\qquad
\forall t\in[b,T_{0}].
\label{est:Fl>exp}
\end{equation}

Since $F_{\lambda}(b)$ is given by (\ref{eqn:Fl(b)}), at this point (\ref{th:lo-ul}) follows from (\ref{Fl-equiv}) and (\ref{est:Fl>exp}).
\end{proof}


\begin{prop}[$\theta$-construction]\label{prop:theta}

Let us assume that $c_{*}\in\mathcal{D}(T_{0},\mu_{1},\mu_{2},\omega,\theta;T_{1},\gamma,\eta)$ for some admissible values of the parameters, and let us set
\begin{equation}
\nu_{2}:=\frac{1}{2}\min\left\{1,\frac{\sqrt{\mu_{1}}}{\pi}\right\}.
\label{defn:lt-delta}
\end{equation}

Let $\lambda$ be a positive real number such that
\begin{equation}
\nu_{2}\,\omega\left(\frac{1}{\lambda}\right)\leq 8\gamma^{3},
\qquad\qquad
\frac{\nu_{2}}{2\gamma}\,\omega\left(\frac{1}{\lambda}\right)\leq
\min\left\{\gamma^{2}-\mu_{1},\mu_{2}-\gamma^{2}\right\}.
\label{hp:lt-lambda}
\end{equation}

Let $(a,b)\subseteq(0,T_{1})$ be an interval whose endpoints satisfy (\ref{hp:ab-int}) and
\begin{equation}
\omega(b)\leq\eta\,\omega(T_{1}-b),
\qquad\qquad
\lambda\,\omega\left(\frac{1}{\lambda}\right)\geq\theta(a).
\label{hp:lt-theta}
\end{equation}

Let us set $t_{0}:=a$ and $k:=\gamma\lambda(b-a)/(2\pi)$, and then let us define
\begin{equation}
t_{i}:=a+\frac{2\pi}{\gamma\lambda}\cdot i,
\qquad\text{and}\qquad
\ep_{i}:=\nu_{2}\,\theta(t_{i})
\qquad\quad
\forall i\in\{1,\ldots,k\}.
\nonumber
\end{equation}

Finally, let us define
\begin{equation}
c_{\lambda}(t):=\begin{cases}
c_{*}(t)  & \text{if }t\in[0,a]\cup[b,T_{0}], 
\\[0.5ex]
\gamma^{2}-\varphi_{\ep,\gamma,\lambda}(t)\quad  & 
\text{if $t\in[t_{i-1},t_{i}]$ for some $i\in\{1,\ldots,k\}$}.
\end{cases}
\nonumber
\end{equation}

Then the following statements hold true.
\begin{enumerate}
\renewcommand{\labelenumi}{(\arabic{enumi})}

\item  The function $c_{\lambda}$ belongs to $\PS(T_{0},\mu_{1},\mu_{2},\omega,\theta)$ and satisfies
\begin{equation}
|c_{\lambda}(t)-c_{*}(t)|\leq\frac{\nu_{2}}{2\gamma}\,\omega\left(\frac{1}{\lambda}\right)
\qquad
\forall t\in[0,T_{0}].
\label{th:lt-cl-c*}
\end{equation}

\item  The solution to problem (\ref{eqn:uc-lambda})--(\ref{data:ode-lambda}) satisfies
\begin{multline}
\ul'(t)^{2}+\lambda^{2}\ul(t)^{2}
\\
\geq
\min\left\{1,\frac{1}{\mu_{2}}\right\}
\exp\left(-\frac{1}{\mu_{1}}\int_{T_{1}}^{T_{0}}\theta(t)\,dt-2\pi\right)\cdot
\exp\left(\frac{\nu_{2}}{8\mu_{2}}\int_{a}^{b}\theta(\tau)\,d\tau\right)
\label{th:lt-ul}
\end{multline}
for every $t\in[b,T_{0}]$.

\end{enumerate}

\end{prop}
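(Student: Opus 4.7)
The plan is to mimic the proof of Proposition~\ref{prop:omega}, replacing the single amplitude $\ep$ with a piecewise schedule $\ep_i=\nu_2\theta(t_i)$ that tracks the blow-up of $\theta$. The structural feature that makes this work is that the function $\varphi_{\ep,\gamma,\lambda}$ defined in (\ref{defn:varphi}) vanishes at every $t_i$: indeed, $\gamma\lambda a/(2\pi)\in\n$ together with $\gamma\lambda(t_i-a)/(2\pi)=i$ forces $\gamma\lambda t_i$ to be an integer multiple of $2\pi$, so both $\sin(\gamma\lambda t_i)$ and $\sin(2\gamma\lambda t_i)$ vanish, hence $\cl(t_i)=\gamma^{2}$ for every $i$. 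This delivers three things simultaneously: continuity of $\cl$ at the junctions, compatibility of the block solutions of Lemma~\ref{lemma:altalena} across consecutive intervals, and the possibility to chain $\omega$-continuity differences through the junction points.

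For statement~(1), the pointwise bound (\ref{th:lt-cl-c*}) and strict hyperbolicity follow from (\ref{th:est-varphi}) applied with $\ep=\ep_{i}\leq\nu_{2}\lambda\omega(1/\lambda)$, an estimate itself deduced from the monotonicity of $\theta$ and the second inequality of (\ref{hp:lt-theta}). The derivative bound $|\cl'(t)|\leq\theta(t)$ on each $(t_{i-1},t_{i})$ comes from (\ref{th:est-varphi}), $\nu_{2}\leq 1$, and the monotonicity of $\theta$. The delicate point is $\omega$-continuity. When $t$ and $s$ lie in the same subinterval, (\ref{th:varphi-omega}) with $\ep=\ep_{i}$ produces $|\cl(t)-\cl(s)|\leq\nu_{2}\max\{1,\pi/\gamma\}\,\omega(|t-s|)\leq\frac{1}{2}\omega(|t-s|)$, where the factor $\frac{1}{2}$ is the reason for choosing $\nu_{2}$ to be half of the $\nu_{1}$ of (\ref{defn:delta}). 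When $t$ and $s$ lie in different subintervals $[t_{i-1},t_{i}]$ and $[t_{j-1},t_{j}]$ with $i<j$, we chain through the junctions: $|\cl(t)-\cl(s)|\leq|\cl(t)-\cl(t_{i})|+|\cl(t_{i})-\cl(t_{j-1})|+|\cl(t_{j-1})-\cl(s)|$, where the middle term vanishes because $\cl(t_{i})=\cl(t_{j-1})=\gamma^{2}$, and each outer term is at most $\frac{1}{2}\omega(|t-s|)$ by the single-interval bound and the monotonicity of $\omega$. The cases where one of $t,s$ is outside $[a,b]$ are treated as in Proposition~\ref{prop:omega}, using the first inequality of (\ref{hp:lt-theta}) and (\ref{defn:c*-omega}) to absorb the boundary contribution.

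For statement~(2), on $[0,a]$ we have $\ul(t)=\sin(\gamma\lambda t)/(\gamma\lambda)$, so $\ul(a)=0$ and $\ul'(a)=1$. Inductively on $i$, on each $[t_{i-1},t_{i}]$ the solution $\ul$ is a scalar multiple of the block $w_{\ep_{i},\gamma,\lambda}$ of Lemma~\ref{lemma:altalena} (translated to start at $t_{i-1}$), giving $\ul(t_{i})=0$ and $\ul'(t_{i})=\ul'(t_{i-1})\exp(\pi\ep_{i}/(8\gamma^{3}\lambda))$, where we used $t_{i}-t_{i-1}=2\pi/(\gamma\lambda)$. Telescoping produces $\ul(b)=0$ and $\ul'(b)^{2}=\exp\bigl(\pi\sum_{i=1}^{k}\ep_{i}/(4\gamma^{3}\lambda)\bigr)$. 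The crucial step is to bound the Riemann sum $\sum_{i}\theta(t_{i})$ from below: the monotonicity of $\theta$ in the form $\theta(t_{i})\cdot 2\pi/(\gamma\lambda)\geq\int_{t_{i}}^{t_{i+1}}\theta$, summed over $i=1,\ldots,k-1$, yields
\[
\sum_{i=1}^{k}\theta(t_{i})\geq\frac{\gamma\lambda}{2\pi}\int_{a}^{b}\theta(\tau)\,d\tau-\theta(a),
\]
where the subtraction of $\theta(a)$ accounts for the missing first cell $\int_{a}^{t_{1}}\theta\leq\theta(a)\cdot 2\pi/(\gamma\lambda)$. Multiplying by $\pi\nu_{2}/(4\gamma^{3}\lambda)$, using $\gamma^{2}\leq\mu_{2}$ on the main term, and combining $\theta(a)\leq\lambda\omega(1/\lambda)$ from (\ref{hp:lt-theta}) with $\nu_{2}\omega(1/\lambda)\leq 8\gamma^{3}$ from (\ref{hp:lt-lambda}) converts the error term into the constant $-2\pi$ appearing in (\ref{th:lt-ul}). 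Finally, propagating from $b$ to $T_{0}$ with the classical hyperbolic energy $F_{\lambda}(t):=\ul'(t)^{2}+\lambda^{2}\cl(t)\ul(t)^{2}$, exactly as in Proposition~\ref{prop:omega} and exploiting that $c_{*}'\equiv 0$ on $[b,T_{1}]$ and $|c_{*}'|\leq\theta$ on $[T_{1},T_{0}]$, contributes the factor $\exp(-\int_{T_{1}}^{T_{0}}\theta/\mu_{1})$ and the equivalence constant $\min\{1,1/\mu_{2}\}$, completing (\ref{th:lt-ul}).

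The main obstacle, and the genuinely new ingredient compared to Proposition~\ref{prop:omega}, is the $\omega$-continuity across subintervals with different amplitudes $\ep_{i}$: one cannot apply (\ref{th:varphi-omega}) directly to values built from different schedules. The remedy is the chaining-through-junctions argument above, feasible only because $\cl$ is forced to equal $\gamma^{2}$ at every $t_{i}$; the appearance of two outer contributions in that chain is exactly what dictates the choice $\nu_{2}=\nu_{1}/2$ in (\ref{defn:lt-delta}). A secondary delicacy is the Riemann-sum defect, which could swamp the exponential gain were $\theta(a)$ allowed to be much larger than $\lambda\omega(1/\lambda)$; the second condition in (\ref{hp:lt-theta}) is precisely the compatibility hypothesis that prevents this and keeps the error inside the harmless constant $-2\pi$.
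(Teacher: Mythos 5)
Your proposal is correct and follows essentially the same route as the paper's proof: blockwise application of Lemma~\ref{lemma:altalena} with the piecewise-constant amplitudes $\ep_{i}=\nu_{2}\theta(t_{i})$, the $\omega$-continuity across blocks obtained by chaining through the junction values $\cl(t_{i})=\gamma^{2}$ with the factor $\tfrac12$ coming from $\nu_{2}=\nu_{1}/2$, the telescoped growth $\ul'(b)=\exp\bigl(\tfrac{\nu_{2}}{16\gamma^{2}}\sum_{i}\theta(t_{i})(t_{i}-t_{i-1})\bigr)$ bounded below by the Riemann-sum comparison with defect controlled through $\theta(a)\leq\lambda\,\omega(1/\lambda)$ and $\nu_{2}\,\omega(1/\lambda)\leq 8\gamma^{3}$ (yielding the $-2\pi$), and the hyperbolic-energy propagation on $[b,T_{0}]$. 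The only differences are cosmetic (your three-term chain with vanishing middle term is the paper's two-term estimate, and your exponent $\pi\ep_{i}/(8\gamma^{3}\lambda)$ is the same quantity written with $t_{i}-t_{i-1}=2\pi/(\gamma\lambda)$ substituted), so no gap to report.
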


\begin{proof}

We follow the same path as in the case of Proposition~\ref{prop:omega}. To begin with, from the monotonicity of $\theta$ and the second assumption in (\ref{hp:lt-theta}) we obtain that
\begin{equation}
\ep_{i}=
\nu_{2}\,\theta(t_{i})\leq
\nu_{2}\,\theta(a)\leq
\nu_{2}\,\lambda\,\omega\left(\frac{1}{\lambda}\right)
\qquad
\forall i\in\{1,\ldots,k\},
\label{est:ep-omega}
\end{equation}
and therefore from the first inequality in (\ref{hp:lt-lambda}) we deduce that $\ep_{i}\leq 8\gamma^{3}\lambda$, and in particular the assumptions of Lemma~\ref{lemma:altalena} are satisfied in every interval $[t_{i-1},t_{i}]$.

\paragraph{\textmd{\textit{Statement~(1)}}}

Let us start by proving (\ref{th:lt-cl-c*}). To this end, we can assume that $t\in[a,b]$, because otherwise  $\cl(t)$ and $c_{*}(t)$ coincide. When $t\in[t_{i-1},t_{i}]$ for some $i\in\{1,\ldots,k\}$, from the first estimate in (\ref{th:est-varphi}) we obtain that
\begin{equation}
|\cl(t)-c_{*}(t)|=
|\varphi_{\ep_{i},\gamma,\lambda}(t)|\leq
\frac{\ep_{i}}{2\gamma\lambda}
\label{est:cl-prelim}
\end{equation}

Plugging (\ref{est:ep-omega}) into (\ref{est:cl-prelim}) we obtain (\ref{th:lt-cl-c*}).

Now let us prove that $c_{\lambda}\in\PS(T_{0},\mu_{1},\mu_{2},\omega,\theta)$. As for the strict hyperbolicity condition, we can limit ourselves to the interval $[a,b]$, where it follows from (\ref{th:lt-cl-c*}) because of the second condition in (\ref{hp:lt-lambda}).

As for the estimate on the derivative, again we can limit ourselves to the interval $[a,b]$. When $t\in[t_{i-1},t_{i}]$ for some $i\in\{1,\ldots,k\}$ we apply the second estimate in (\ref{th:est-varphi}) and we obtain that
\begin{equation}
|\cl'(t)|=
|\varphi_{\ep_{i},\gamma,\lambda}'(t)|\leq
\ep_{i}=
\nu_{2}\,\theta(t_{i})\leq
\theta(t_{i})\leq
\theta(t),
\nonumber
\end{equation}
where the last inequality follows from the monotonicity of $\theta(t)$.

Finally, let us check the $\omega$-continuity of $\cl(t)$. To this end, we consider $t$ and $s$ in $[0,T_{0}]$, we assume without loss of generality that $s<t$, and we distinguish some cases according to the position of $t$ and $s$.
\begin{itemize}

\item  If $t_{i-1}\leq s<t\leq t_{i}$ for some $i\in\{1,\ldots,k\}$, then from (\ref{th:varphi-omega}) we obtain that
\begin{equation}
|\cl(t)-\cl(s)|=
|\varphi_{\ep_{i},\gamma,\lambda}(t)-\varphi_{\ep_{i},\gamma,\lambda}(s)|\leq
\ep_{i}\cdot\max\left\{1,\frac{\pi}{\gamma}\right\}\cdot
\left[\lambda\,\omega\left(\frac{1}{\lambda}\right)\right]^{-1}
\cdot\omega(t-s).
\nonumber
\end{equation}

At this point we exploit (\ref{est:ep-omega}) and our definition (\ref{defn:lt-delta}) of $\nu_{2}$, and we conclude that
\begin{equation}
|\cl(t)-\cl(s)|\leq
\nu_{2}\max\left\{1,\frac{\pi}{\gamma}\right\}\cdot\omega(t-s)\leq
\frac{1}{2}\,\omega(t-s).
\nonumber
\end{equation}

\item  If $s\in[t_{i-1},t_{i}]$ and $t\in[t_{j-1},t_{j}]$ for some $1\leq i<j\leq k$, then from the previous case (and thanks to the factor 1/2) we deduce that
\begin{eqnarray*}
|\cl(t)-\cl(s)| & \leq &
|\cl(t)-\gamma^{2}|+|\gamma^{2}-\cl(s)|
\\[0.5ex]
& = &
|\cl(t)-\cl(t_{j-1})|+|\cl(t_{i})-\cl(s)|
\\[0.5ex]
& \leq &
\frac{1}{2}\,\omega(t-t_{j-1})+\frac{1}{2}\,\omega(t_{i}-s)
\\[0.5ex]
& \leq &
\omega(t-s).
\end{eqnarray*}

\item  All other possibilities for $s$ and $t$ can be dealt with as in the case of the $\omega$-construction.

\end{itemize} 

\paragraph{\textmd{\textit{Statement~(2)}}}

Let us examine the solution to problem (\ref{eqn:uc-lambda})--(\ref{data:ode-lambda}). As in the case of the $\omega$-construction, in the interval $[0,a]$ we have an explicit formula for the solution, from which we deduce that $\ul(a)=0$ and $\ul'(a)=1$. Then in the interval $[t_{0},t_{1}]$ the solution is given by the explicit formula $\ul(t)=w_{\ep_{1},\gamma,\lambda}(t)$, where  $w_{\ep,\gamma,\lambda}$ is defined by (\ref{defn:w}). Since $\gamma\lambda t_{1}$ is an integer multiple of $2\pi$, from the explicit formula we obtain that
\begin{equation}
\ul(t_{1})=0,
\qquad
\ul'(t_{1})=\exp\left(\frac{\ep_{i}}{16\gamma^{2}}(t_{1}-t_{0})\right)=
\exp\left(\frac{\nu_{2}}{16\gamma^{2}}\,\theta(t_{1})(t_{1}-t_{0})\right).
\nonumber
\end{equation}

In the interval $[t_{1},t_{2}]$ the solution is given by the explicit formula $\ul(t)=\alpha w_{\ep_{2},\gamma,\lambda}(t)$, with $\alpha=\ul'(t_{1})$, and therefore
\begin{equation}
\ul(t_{2})=0,
\qquad
\ul'(t_{2})=
\exp\left(\frac{\nu_{2}}{16\gamma^{2}}\,\theta(t_{1})(t_{1}-t_{0})+
\frac{\nu_{2}}{16\gamma^{2}}\,\theta(t_{2})(t_{2}-t_{1})\right).
\nonumber
\end{equation}

At this point by finite induction we find that
\begin{equation}
\ul(b)=\ul(t_{k})=0,
\qquad
\ul'(b)=\ul'(t_{k})=
\exp\left(\frac{\nu_{2}}{16\gamma^{2}}\sum_{i=1}^{k}\theta(t_{i})(t_{i}-t_{i-1})\right).
\label{est:w-b}
\end{equation}

Since $t_{i}-t_{i-1}$ does not depend on $i$, from the monotonicity of $\theta(t)$ we deduce that
\begin{multline}
\quad
\sum_{i=1}^{k}\theta(t_{i})(t_{i}-t_{i-1})=
\frac{2\pi}{\gamma\lambda}\sum_{i=1}^{k}\theta(t_{i})\geq
\frac{2\pi}{\gamma\lambda}\sum_{i=1}^{k-1}\theta(t_{i})\geq
\int_{t_{1}}^{b}\theta(t)\,dt
\\[1ex]
=\int_{a}^{b}\theta(t)\,dt-\int_{a}^{t_{1}}\theta(t)\,dt\geq
\int_{a}^{b}\theta(t)\,dt-\frac{2\pi}{\gamma\lambda}\,\theta(a).
\quad
\nonumber
\end{multline}

Recalling the second condition in (\ref{hp:lt-theta}), and the first condition in (\ref{hp:lt-lambda}), we obtain that
\begin{equation}
\sum_{i=1}^{k}\theta(t_{i})(t_{i}-t_{i-1})\geq
\int_{a}^{b}\theta(t)\,dt-\frac{2\pi}{\gamma}\,\omega\left(\frac{1}{\lambda}\right)\geq
\int_{a}^{b}\theta(t)\,dt-\frac{16\pi\gamma^{2}}{\nu_{2}}.
\nonumber
\end{equation}

Plugging this estimate into (\ref{est:w-b}) we conclude that
\begin{equation}
\ul'(b)^{2}+\gamma^{2}\lambda^{2}\ul(b)^{2}\geq
\exp\left(\frac{\nu_{2}}{8\mu_{2}}\int_{a}^{b}\theta(t)\,dt-2\pi\right).
\nonumber
\end{equation}

At this point in the interval $[b,T_{0}]$ we consider the hyperbolic energy as in the case of the $\omega$-construction and we obtain (\ref{th:lt-ul}).
\end{proof}


\begin{prop}[Asymptotic activators for initially constant coefficients]\label{prop:final}

Let us assume that $c_{*}\in\mathcal{D}(T_{0},\mu_{1},\mu_{2},\omega,\theta;T_{1},\gamma,\eta)$ for some admissible values of the parameters, and let us define $m(\lambda)$ as in (\ref{defn:m-lambda}), and $M_{3}$ as in (\ref{defn:M12}). 

Let us assume that $m(\lambda)\to +\infty$ as $\lambda\to +\infty$.

Then there exists a family  of asymptotic activators $\{\cl(t)\}\subseteq\PS(T_{0},\mu_{1},\mu_{2},\omega,\theta)$ with rate $\phi(\lambda):=M_{3}\,m(\lambda)$ such that $\cl(t)\to c_{*}(t)$ uniformly in $[0,T_{0}]$.

\end{prop}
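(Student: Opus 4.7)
The plan is to combine the two preliminary constructions of Proposition~\ref{prop:omega} and Proposition~\ref{prop:theta}, choosing between them according to which summand dominates in the minimum defining $m(\lambda)$. First I would deduce from the assumption $m(\lambda)\to +\infty$ three preliminary facts: that $\lambda\,\omega(1/\lambda)\to +\infty$ (by fixing any $s>0$ in (\ref{defn:m-lambda})), that $\theta$ is non-integrable at the origin (otherwise $m(\lambda)\leq\int_{0}^{T_{0}}\theta<+\infty$), and therefore $\theta(0^{+})=+\infty$ by monotonicity. These imply that for every sufficiently large $\lambda$ the minimum in (\ref{defn:m-lambda}) is attained at some interior point $s_{\lambda}\in(0,T_{0})$ satisfying $\theta(s_{\lambda})=\lambda\,\omega(1/\lambda)$, and that $s_{\lambda}\to 0^{+}$ as $\lambda\to +\infty$.

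I would then set $A(\lambda):=\lambda\,\omega(1/\lambda)\,s_{\lambda}$ and $B(\lambda):=\int_{s_{\lambda}}^{T_{0}}\theta(t)\,dt$, so that $A(\lambda)+B(\lambda)=m(\lambda)$, and split into two cases. In the $\omega$-case $A(\lambda)\geq B(\lambda)$ (hence $A(\lambda)\geq m(\lambda)/2$), I would pick $b_{\lambda}$ as the largest integer multiple of $2\pi/(\gamma\lambda)$ not exceeding $s_{\lambda}$ and $a_{\lambda}$ as a multiple with $b_{\lambda}-a_{\lambda}\geq s_{\lambda}/2$; this is possible for large $\lambda$ because the bound $\lambda s_{\lambda}\geq m(\lambda)/(2\omega(1/\lambda))\to +\infty$ makes the rounding cost $O(1/\lambda)$ negligible compared to $s_{\lambda}$. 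Proposition~\ref{prop:omega} then applies: hypotheses (\ref{hp:lo-lambda})--(\ref{hp:lo-theta}) hold because $\omega(1/\lambda)\to 0$, $b_{\lambda}\to 0$ (so $\omega(b_{\lambda})\to 0$ while $\omega(T_{1}-b_{\lambda})\to\omega(T_{1})>0$), and $\theta(b_{\lambda})\geq\theta(s_{\lambda})=\lambda\,\omega(1/\lambda)$ by monotonicity, yielding in (\ref{th:lo-ul}) an exponential factor at least $\exp(\frac{\nu_{1}}{32\mu_{2}}m(\lambda))$.

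In the $\theta$-case $B(\lambda)\geq m(\lambda)/2$, I would define $b^{*}_{\lambda}\in(s_{\lambda},T_{0})$ implicitly by $\int_{b^{*}_{\lambda}}^{T_{0}}\theta(t)\,dt=m(\lambda)/8$; non-integrability of $\theta$ at $0$ and $m(\lambda)\to +\infty$ force $b^{*}_{\lambda}\to 0^{+}$. Then I would select $a_{\lambda}$ and $b_{\lambda}$ as integer multiples of $2\pi/(\gamma\lambda)$ with $s_{\lambda}\leq a_{\lambda}<b_{\lambda}\leq b^{*}_{\lambda}$ and $\max\{a_{\lambda}-s_{\lambda},b^{*}_{\lambda}-b_{\lambda}\}\leq 2\pi/(\gamma\lambda)$, so that Proposition~\ref{prop:theta} applies with $\theta(a_{\lambda})\leq\theta(s_{\lambda})=\lambda\,\omega(1/\lambda)$. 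The rounding error in the integral is bounded by $\theta(s_{\lambda})\cdot 4\pi/(\gamma\lambda)=4\pi\omega(1/\lambda)/\gamma$, which for large $\lambda$ is at most $m(\lambda)/8$; therefore $\int_{a_{\lambda}}^{b_{\lambda}}\theta\geq (B(\lambda)-m(\lambda)/8)-m(\lambda)/8\geq m(\lambda)/4$, yielding in (\ref{th:lt-ul}) an exponential factor at least $\exp(\frac{\nu_{2}}{32\mu_{2}}m(\lambda))$.

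Finally, uniform convergence $c_{\lambda}\to c_{*}$ follows directly from (\ref{th:lo-cl-c*}) or (\ref{th:lt-cl-c*}), since $|c_{\lambda}-c_{*}|\leq C\omega(1/\lambda)\to 0$. To match the prescribed rate $\phi(\lambda)=M_{3}\,m(\lambda)$, one uses $M_{3}=\nu_{1}/(128\mu_{2})=\nu_{2}/(64\mu_{2})$ to verify that both exponents above reach $2\phi(\lambda)=\nu_{1}m(\lambda)/(64\mu_{2})=\nu_{2}m(\lambda)/(32\mu_{2})$; the multiplicative constants appearing in (\ref{th:lo-ul}) and (\ref{th:lt-ul}) depend only on $c_{*}$ and provide a uniform $M_{\delta}$ for (\ref{eqn:asympt-activ}). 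Since $b_{\lambda}\to 0$, for every fixed $\delta\in(0,T_{0})$ we have $b_{\lambda}\leq\delta$ for $\lambda$ large enough, so (\ref{eqn:asympt-activ}) holds on $[\delta,T_{0}]\subseteq[b_{\lambda},T_{0}]$ as required. The main technical obstacle will be executing the rounding to integer multiples of $2\pi/(\gamma\lambda)$ demanded by Lemma~\ref{lemma:altalena} while preserving both the size $s_{\lambda}/2$ of the interval in the $\omega$-case and the value of $\int_{a_{\lambda}}^{b_{\lambda}}\theta$ in the $\theta$-case, all with sufficient control of the universal constants to recover exactly the rate $M_{3}\,m(\lambda)$.
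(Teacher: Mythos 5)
Your proposal is correct and follows essentially the same route as the paper's proof: it identifies the interior minimizer $s_{\lambda}$ with $\theta(s_{\lambda})=\lambda\,\omega(1/\lambda)$, splits according to whether $\lambda\,\omega(1/\lambda)s_{\lambda}$ or $\int_{s_{\lambda}}^{T_{0}}\theta$ contributes at least $m(\lambda)/2$, and then applies the $\omega$-construction (Proposition~\ref{prop:omega}) or the $\theta$-construction (Proposition~\ref{prop:theta}) on an interval with endpoints rounded to multiples of $2\pi/(\gamma\lambda)$, with the same bookkeeping of constants leading to the rate $M_{3}\,m(\lambda)$. The only deviations are cosmetic (your $b_{\lambda}^{*}$ defined by $\int_{b_{\lambda}^{*}}^{T_{0}}\theta=m(\lambda)/8$ versus the paper's $\widehat{s}_{\lambda}$ bisecting $\int_{s_{\lambda}}^{T_{0}}\theta$, and slightly different rounding conventions), and the small points left implicit (e.g.\ that $[s_{\lambda},b_{\lambda}^{*}]$ contains at least two grid points) follow from the very estimates you already invoke.
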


\begin{proof}

The strategy is the following. For every $\lambda$ large enough we define a coefficient $\cl\in\PS(T_{0},\mu_{1},\mu_{2},\omega,\theta)$ by modifying $c_{*}$ in some interval $(a_{\lambda},b_{\lambda})$ according to the constructions described in Proposition~\ref{prop:omega} and Proposition~\ref{prop:theta}. We show that $b_{\lambda}\to 0$, and that for $\lambda$ large enough it turns out that
\begin{equation}
|\cl(t)-c_{*}(t)|\leq\frac{\nu_{2}}{\gamma}\,\omega\left(\frac{1}{\lambda}\right)
\qquad
\forall t\in[0,T_{0}],
\label{th:aa-cl-c*}
\end{equation}
where $\nu_{2}$ is defined by (\ref{defn:lt-delta}), and the solutions $\ul(t)$ to problem (\ref{eqn:uc-lambda})--(\ref{data:ode-lambda}) satisfy
\begin{equation}
\ul'(t)^{2}+\lambda^{2}\ul(t)^{2}\geq
M_{4}\exp(2M_{3}\,m(\lambda))
\qquad
\forall t\in[\bl,T_{0}],
\label{th:aa-ul}
\end{equation}
where 
\begin{equation}
M_{4}:=\min\left\{1,\frac{1}{\mu_{2}}\right\}
\exp\left(-\frac{1}{\mu_{1}}\int_{T_{1}}^{T_{0}}\theta(t)\,dt-2\pi
-\frac{\omega(1)}{8\mu_{2}}\right).
\nonumber
\end{equation}

If we prove these claims, then from (\ref{th:aa-cl-c*}) it follows that $\cl\to c_{*}$ uniformly in $[0,T_{0}]$, while (\ref{th:aa-ul}) and the fact that $b_{\lambda}\to 0^{+}$ imply that $\{\cl(t)\}$ is a family of asymptotic activators with rate $\phi(\lambda):=M_{3}\,m(\lambda)$.

In order to define $\cl$ we distinguish two cases. To begin with, we observe that $\omega(\sigma)$ and $\theta(t)$ satisfy (\ref{hp:cor-int-infty}), because otherwise $m(\lambda)$ would be bounded independently of $\lambda$.  

Let $\psi(s)$ denote the function whose minimum is $m(\lambda)$. Due to the second condition in (\ref{hp:cor-int-infty}), the minimum is never attained in $s=0$. Moreover, since $\psi'(s)=\lambda\,\omega(1/\lambda)-\theta(s)$, from the first condition in (\ref{hp:cor-int-infty}) we deduce that $\psi'(T_{0})>0$ when $\lambda$ is large enough, and therefore for these values of $\lambda$ the minimum is not attained also in $s=T_{0}$. Therefore, when $\lambda$ is large enough the minimum is attained in some point $s_{\lambda}\in(0,T_{0})$ where $\psi'(s_{\lambda})=0$, and hence 
\begin{equation}
\theta(s_{\lambda})=\lambda\,\omega\left(\frac{1}{\lambda}\right).
\nonumber
\end{equation}

Exploiting again the first condition in (\ref{hp:cor-int-infty}) we deduce that the right-hand side tends to $+\infty$, and hence $s_{\lambda}\to 0^{+}$. Now let $\Lambda_{\omega}$ denote the set of all $\lambda>0$ such that
\begin{equation}
\lambda\,\omega\left(\frac{1}{\lambda}\right)s_{\lambda}\geq\frac{1}{2}\,m(\lambda),
\label{defn:Lambda-omega}
\end{equation}
and let $\Lambda_{\theta}$ denote the set of remaining $\lambda$'s, for which necessarily it turns out that
\begin{equation}
\int_{s_{\lambda}}^{T_{0}}\theta(t)\,dt\geq\frac{1}{2}\,m(\lambda).
\nonumber
\end{equation}

We are now ready to define $\cl(t)$ in the two cases.

\paragraph{\textmd{\textit{Case $\lambda\in\Lambda_{\omega}$}}}

For every $\lambda\in\Lambda_{\omega}$ we set (here $\lfloor\alpha\rfloor$ denotes the greatest integer less then or equal to $\alpha$)
\begin{equation}
a=a_{\lambda}:=
\frac{2\pi}{\gamma\lambda}\left\lfloor\frac{\gamma\lambda}{4\pi}\,s_{\lambda}-2\right\rfloor,
\qquad\qquad
b=b_{\lambda}:=
\frac{4\pi}{\gamma\lambda}\left\lfloor\frac{\gamma\lambda}{4\pi}\,s_{\lambda}\right\rfloor,
\nonumber
\end{equation}
and we define the coefficient $\cl(t)$ according to the $\omega$-construction of Proposition~\ref{prop:omega}.  Let us check that the assumptions are satisfied if $\lambda\in\Lambda_{\omega}$ is large enough. To begin with, we observe that $a_{\lambda}>0$ because from (\ref{defn:Lambda-omega}) we deduce that $\lambda s_{\lambda}\to +\infty$ when $\lambda\to +\infty$ remaining inside $\Lambda_{\omega}$. In addition, it turns out that
\begin{equation}
0<a_{\lambda}<b_{\lambda}\leq s_{\lambda},
\nonumber
\end{equation}
so that in particular $b_{\lambda}\to 0^{+}$ and $(a_{\lambda},b_{\lambda})\subseteq(0,T_{1})$ for large $\lambda$. Moreover, (\ref{hp:ab-int}) is almost trivial from the definition, while the two inequalities in (\ref{hp:lo-lambda}) are again satisfied when $\lambda$ is large because $\omega(\sigma)\to 0$ as $\sigma\to 0^{+}$. Finally, the first inequality in (\ref{hp:lo-theta}) is true for $\lambda$ large because $b_{\lambda}\to 0$, while the second inequality in (\ref{hp:lo-theta})
is satisfied because
\begin{equation}
\theta(b_{\lambda})\geq
\theta(s_{\lambda})=\lambda\,\omega\left(\frac{1}{\lambda}\right)\geq
\nu_{1}\lambda\,\omega\left(\frac{1}{\lambda}\right).
\nonumber
\end{equation}

At this point we can use the conclusions of Proposition~\ref{prop:omega}. From (\ref{th:lo-cl-c*}) we obtain immediately (\ref{th:aa-cl-c*}) in this case (note that $\nu_{1}=2\nu_{2}$). Finally, we observe that
\begin{equation}
b_{\lambda}-a_{\lambda}\geq\frac{s_{\lambda}}{2},
\nonumber
\end{equation}
and therefore
\begin{equation}
\lambda\,\omega\left(\frac{1}{\lambda}\right)(b_{\lambda}-a_{\lambda})\geq
\frac{1}{2}\lambda\,\omega\left(\frac{1}{\lambda}\right)s_{\lambda}\geq
\frac{1}{4}\,m(\lambda),
\nonumber
\end{equation}
so that (\ref{th:lo-ul}) implies (\ref{th:aa-ul}) in this case.

\paragraph{\textmd{\textit{Case $\lambda\in\Lambda_{\theta}$}}}

For every $\lambda\in\Lambda_{\theta}$ we define $\widehat{s}_{\lambda}$ in such a way that
\begin{equation}
\int_{s_{\lambda}}^{\widehat{s}_{\lambda}}\theta(t)\,dt=
\frac{1}{2}\int_{s_{\lambda}}^{T_{0}}\theta(t)\,dt\geq\frac{1}{4}m(\lambda),
\nonumber
\end{equation}
and we observe that $\widehat{s}_{\lambda}\to 0^{+}$ because the integral of $\theta(t)$ in $(0,T_{1})$ is divergent. Then we set (here $\lceil\alpha\rceil$ denotes the smallest integer greater then or equal to $\alpha$)
\begin{equation}
a=a_{\lambda}:=
\frac{2\pi}{\gamma\lambda}\left\lceil\frac{\gamma\lambda}{2\pi}\,s_{\lambda}\right\rceil,
\qquad\qquad
b=b_{\lambda}:=
\frac{2\pi}{\gamma\lambda}\left\lceil\frac{\gamma\lambda}{2\pi}\,\widehat{s}_{\lambda}\right\rceil,
\nonumber
\end{equation}
and we define the coefficient $\cl(t)$ according to the $\theta$-construction of Proposition~\ref{prop:theta}.

With these notations it turns out that
\begin{equation}
0<
s_{\lambda}\leq 
a_{\lambda}\leq
b_{\lambda}
\qquad\text{and}\qquad
\widehat{s}_{\lambda}\leq
b_{\lambda}\leq
\widehat{s}_{\lambda}+\frac{2\pi}{\gamma\lambda},
\nonumber
\end{equation}
so that in particular $b_{\lambda}\to 0^{+}$ and $(a_{\lambda},b_{\lambda})\subseteq(0,T_{1})$ for large $\lambda$. Moreover, (\ref{hp:ab-int}) is almost trivial from the definition, while the two inequalities in (\ref{hp:lt-lambda}) are again satisfied when $\lambda$ is large because $\omega(\sigma)\to 0$ as $\sigma\to 0^{+}$. Finally, the first inequality in (\ref{hp:lt-theta}) is true for $\lambda$ large because $b_{\lambda}\to 0$, while the second inequality in (\ref{hp:lt-theta}) is satisfied because
\begin{equation}
\theta(a_{\lambda})\leq
\theta(s_{\lambda})=
\lambda\,\omega\left(\frac{1}{\lambda}\right).
\nonumber
\end{equation}

At this point we can use the conclusions of Proposition~\ref{prop:theta}. From (\ref{th:lt-cl-c*}) we obtain immediately (\ref{th:aa-cl-c*}) in this case. Finally, we observe that
\begin{multline}
\quad
\int_{a_{\lambda}}^{b_{\lambda}}\theta(t)\,dt\geq
\int_{s_{\lambda}+2\pi/(\gamma\lambda)}^{\widehat{s}_{\lambda}}\theta(t)\,dt=
\int_{s_{\lambda}}^{\widehat{s}_{\lambda}}\theta(t)\,dt-
\int_{s_{\lambda}}^{s_{\lambda}+2\pi/(\gamma\lambda)}\theta(t)\,dt
\\[1ex]
\geq\frac{1}{4}m(\lambda)-
\frac{2\pi}{\gamma\lambda}\,\theta(s_{\lambda})\geq
\frac{1}{4}\,m(\lambda)-\frac{2\pi}{\gamma}\,\omega\left(\frac{1}{\lambda}\right),
\quad
\nonumber
\end{multline}
and therefore
\begin{equation}
\int_{a_{\lambda}}^{b_{\lambda}}\theta(t)\,dt\geq
\frac{1}{4}\,m(\lambda)-\frac{2\pi\omega(1)}{\gamma}
\nonumber
\end{equation}
when $\lambda$ is large enough. Recalling that $\nu_{2}\leq\gamma/(2\pi)$, at this point (\ref{th:lt-ul}) implies (\ref{th:aa-ul}) in this case.
\end{proof}


\setcounter{equation}{0}
\section{Proof of Corollaries}\label{sec:cor}

\paragraph{\textmd{\textit{Proof of Corollary~\ref{cor:int-infty}}}}

It is enough to show that $m(\lambda)\to +\infty$. Let $\lambda_{n}\to +\infty$ be any sequence of positive real numbers. For every positive integer $n$, let us choose 
\begin{equation}
s_{n}\in\operatorname{argmin}\left\{\lambda_{n}\,\omega\left(\frac{1}{\lambda_{n}}\right)s+
\int_{s}^{T_{0}}\theta(t)\,dt:s\in[0,T_{0}]\right\}.
\nonumber
\end{equation}

Up to subsequences (not relabeled) we can assume that $s_{n}\to s_{\infty}\in[0,T_{0}]$. If $s_{\infty}=0$, then $m(\lambda_{n})\to +\infty$ due to the second term in the minimum and the second assumption in (\ref{hp:cor-int-infty}). If $s_{\infty}>0$, then $m(\lambda_{n})\to +\infty$ due to the first term in the minimum and the first assumption in (\ref{hp:cor-int-infty}).
\hfill$\Box$

\paragraph{\textmd{\textit{Proof of Corollary~\ref{cor:1/t} -- Statement (1)}}}

From assumption (\ref{hp:cor-1/t-theta-1}) we deduce that there exists a positive real number $M$ such that $c(t)\leq M/t$ for every $t\in(0,T_{0}]$, and hence
\begin{equation}
\lambda\,\omega\left(\frac{1}{\lambda}\right)s+\int_{s}^{T_{0}}\theta(t)\,dt\leq
\lambda\,\omega\left(\frac{1}{\lambda}\right)s+M(\log T_{0}-\log s).
\nonumber
\end{equation}

In particular, setting $s=\lambda^{\alpha-1}$ we obtain that
\begin{equation}
m(\lambda)\leq
\lambda^{\alpha}\omega\left(\frac{1}{\lambda}\right)+M(\log T_{0}+(1-\alpha)\log\lambda).
\nonumber
\end{equation}

Now we divide by $\log\lambda$ and we let $\lambda\to+\infty$. Due to assumption (\ref{hp:cor-1/t-omega-1}) we deduce that
\begin{equation}
\limsup_{n\to+\infty}\frac{m(\lambda)}{\log\lambda}\leq M(1-\alpha)
\nonumber
\end{equation}
for every $\alpha\in(0,1)$. Finally, letting $\alpha\to 1^{-}$ we conclude that actually $m(\lambda)/\log\lambda\to 0$, which implies well-posedness with at most an arbitrarily small derivative loss.

\paragraph{\textmd{\textit{Proof of Corollary~\ref{cor:1/t} -- Statement (2)}}}

From assumptions (\ref{hp:cor-1/t-omega-2}) and (\ref{hp:cor-1/t-theta-2}) we deduce that there exists positive real numbers $m_{2}$ and $m_{1}$ such that
\begin{equation}
\omega(\sigma)\geq m_{1}\,\sigma^{\alpha}
\qquad\quad\text{and}\quad\qquad
\theta(t)\geq\frac{m_{2}}{t},
\nonumber
\end{equation}
and in particular
\begin{equation}
\lambda\,\omega\left(\frac{1}{\lambda}\right)s+\int_{s}^{T_{0}}\theta(t)\,dt\geq
m_{1}\lambda^{1-\alpha}s+m_{2}(\log T_{0}-\log s).
\nonumber
\end{equation}

Minimizing the right-hand side with respect to $s$ we conclude that
\begin{equation}
m(\lambda)\geq m_{2}\left(1+\log T_{0}+\log(m_{1}\lambda^{1-\alpha})-\log m_{2}\right),
\nonumber
\end{equation}
and hence
\begin{equation}
\liminf_{\lambda\to +\infty}\frac{m(\lambda)}{\log\lambda}\geq m_{2}(1-\alpha)>0,
\nonumber
\end{equation}
which implies that the derivative loss is actually finite, and proportional to $1-\alpha$, for a residual set of coefficients.
\hfill$\Box$


\subsubsection*{\centering Acknowledgments}

Both authors are members of the Italian \selectlanguage{italian}``Gruppo Nazionale per l'Analisi Matematica, la Probabilit\`{a} e le loro Applicazioni'' (GNAMPA) of the ``Istituto Nazionale di Alta Matematica'' (INdAM). 

\selectlanguage{english}



\begin{thebibliography}{10}
\providecommand{\url}[1]{\texttt{#1}}
\providecommand{\urlprefix}{URL }
\providecommand{\selectlanguage}[1]{\relax}
\providecommand{\eprint}[2][]{\url{#2}}

\bibitem{2003-DIE-CicCol}
\textsc{M.~Cicognani}, \textsc{F.~Colombini}.
\newblock Sharp regularity of the coefficients in the {C}auchy problem for a
  class of evolution equations.
\newblock \emph{Differential Integral Equations} \textbf{16} (2003), no.~11,
  1321--1344.

\bibitem{2006-JDE-CicCol}
\textsc{M.~Cicognani}, \textsc{F.~Colombini}.
\newblock Modulus of continuity of the coefficients and loss of derivatives in
  the strictly hyperbolic {C}auchy problem.
\newblock \emph{J. Differential Equations} \textbf{221} (2006), no.~1,
  143--157.

\bibitem{2006-JDE-Colombini}
\textsc{F.~Colombini}.
\newblock Energy estimates at infinity for hyperbolic equations with
  oscillating coefficients.
\newblock \emph{J. Differential Equations} \textbf{231} (2006), no.~2,
  598--610.

\bibitem{dgcs}
\textsc{F.~Colombini}, \textsc{E.~De~Giorgi}, \textsc{S.~Spagnolo}.
\newblock Sur les \'{e}quations hyperboliques avec des coefficients qui ne
  d\'{e}pendent que du temps.
\newblock \emph{Ann. Scuola Norm. Sup. Pisa Cl. Sci. (4)} \textbf{6} (1979),
  no.~3, 511--559.

\bibitem{2015-CPDE-ColDSaFanMet}
\textsc{F.~Colombini}, \textsc{D.~Del~Santo}, \textsc{F.~Fanelli},
  \textsc{G.~M\'{e}tivier}.
\newblock The well-posedness issue in {S}obolev spaces for hyperbolic systems
  with {Z}ygmund-type coefficients.
\newblock \emph{Comm. Partial Differential Equations} \textbf{40} (2015),
  no.~11, 2082--2121.

\bibitem{2002-SNS-ColDSaKin}
\textsc{F.~Colombini}, \textsc{D.~Del~Santo}, \textsc{T.~Kinoshita}.
\newblock Well-posedness of the {C}auchy problem for a hyperbolic equation with
  non-{L}ipschitz coefficients.
\newblock \emph{Ann. Sc. Norm. Super. Pisa Cl. Sci. (5)} \textbf{1} (2002),
  no.~2, 327--358.

\bibitem{2003-BSM-ColDSaRei}
\textsc{F.~Colombini}, \textsc{D.~Del~Santo}, \textsc{M.~Reissig}.
\newblock On the optimal regularity of coefficients in hyperbolic {C}auchy
  problems.
\newblock \emph{Bull. Sci. Math.} \textbf{127} (2003), no.~4, 328--347.

\bibitem{1995-Duke-ColLer}
\textsc{F.~Colombini}, \textsc{N.~Lerner}.
\newblock Hyperbolic operators with non-{L}ipschitz coefficients.
\newblock \emph{Duke Math. J.} \textbf{77} (1995), no.~3, 657--698.

\bibitem{1989-ENS-ColSpa}
\textsc{F.~Colombini}, \textsc{S.~Spagnolo}.
\newblock Some examples of hyperbolic equations without local solvability.
\newblock \emph{Ann. Sci. \'{E}cole Norm. Sup. (4)} \textbf{22} (1989), no.~1,
  109--125.

\bibitem{2007-DIE-DSaKinRei}
\textsc{D.~Del~Santo}, \textsc{T.~Kinoshita}, \textsc{M.~Reissig}.
\newblock Energy estimates for strictly hyperbolic equations with low
  regularity in coefficients.
\newblock \emph{Differential Integral Equations} \textbf{20} (2007), no.~8,
  879--900.

\bibitem{2015-JMAA-EbeFitHir}
\textsc{M.~R. Ebert}, \textsc{L.~Fitriana}, \textsc{F.~Hirosawa}.
\newblock On the energy estimates of the wave equation with time dependent
  propagation speed asymptotically monotone functions.
\newblock \emph{J. Math. Anal. Appl.} \textbf{432} (2015), no.~2, 654--677.

\bibitem{gg:residual}
\textsc{M.~Ghisi}, \textsc{M.~Gobbino}.
\newblock Residual pathologies.
\newblock ArXiv:1908.09496.

\bibitem{gg:DGCS-critical}
\textsc{M.~Ghisi}, \textsc{M.~Gobbino}.
\newblock Critical counterexamples for linear wave equations with
  time-dependent propagation speed.
\newblock \emph{J. Differential Equations} \textbf{269} (2020), no.~12,
  11435--11460.

\bibitem{gg:CDSR-optimal}
\textsc{M.~Ghisi}, \textsc{M.~Gobbino}.
\newblock Finite vs infinite derivative loss for abstract wave equations with
  singular time-dependent propagation speed.
\newblock \emph{Bull. Sci. Math.} \textbf{166} (2021), 102918, 41.

\bibitem{2003-MMAS-Hirosawa}
\textsc{F.~Hirosawa}.
\newblock Loss of regularity for the solutions to hyperbolic equations with
  non-regular coefficients---an application to {K}irchhoff equation.
\newblock \emph{Math. Methods Appl. Sci.} \textbf{26} (2003), no.~9, 783--799.

\bibitem{2003-MatN-Hirosawa}
\textsc{F.~Hirosawa}.
\newblock On the {C}auchy problem for second order strictly hyperbolic
  equations with non-regular coefficients.
\newblock \emph{Math. Nachr.} \textbf{256} (2003), 29--47.

\bibitem{2007-MathAnn-Hirosawa}
\textsc{F.~Hirosawa}.
\newblock On the asymptotic behavior of the energy for the wave equations with
  time depending coefficients.
\newblock \emph{Math. Ann.} \textbf{339} (2007), no.~4, 819--838.

\bibitem{2021-JMAA-Hirosawa}
\textsc{F.~Hirosawa}.
\newblock On the energy estimates of semi-discrete wave equations with time
  dependent propagation speed.
\newblock \emph{J. Math. Anal. Appl.} \textbf{496} (2021), no.~1, Paper No.
  124798, 28.

\bibitem{2005-ADE-KinRei}
\textsc{T.~Kinoshita}, \textsc{M.~Reissig}.
\newblock About the loss of derivatives for strictly hyperbolic equations with
  non-{L}ipschitz coefficients.
\newblock \emph{Adv. Differential Equations} \textbf{10} (2005), no.~2,
  191--222.

\bibitem{reed-simon}
\textsc{M.~Reed}, \textsc{B.~Simon}.
\newblock \emph{Methods of modern mathematical physics. {I}}.
\newblock Academic Press, Inc. [Harcourt Brace Jovanovich, Publishers], New
  York, second edition, 1980.
\newblock Functional analysis.

\bibitem{2005-HokkMJ-ReiSmi}
\textsc{M.~Reissig}, \textsc{J.~Smith}.
\newblock {$L^p$}-{$L^q$} estimate for wave equation with bounded time
  dependent coefficient.
\newblock \emph{Hokkaido Math. J.} \textbf{34} (2005), no.~3, 541--586.

\bibitem{1990-CPDE-yamazaki}
\textsc{T.~Yamazaki}.
\newblock On the {$L^2({\bf R}^n)$} well-posedness of some singular or
  degenerate partial differential equations of hyperbolic type.
\newblock \emph{Comm. Partial Differential Equations} \textbf{15} (1990),
  no.~7, 1029--1078.

\end{thebibliography}

\label{NumeroPagine}

\end{document}